\documentclass[11pt]{article}
\usepackage{mathrsfs}
\usepackage{mathtools}
\usepackage{amssymb}
\usepackage{amsmath}
\usepackage{amsthm}
\usepackage{amscd}
\usepackage{commath}
\usepackage{enumerate}
\usepackage{stmaryrd}
\usepackage{enumitem}
\usepackage{hyperref}
\usepackage{thmtools}
\usepackage{listings}
\usepackage{tikz}
\usepackage{tikz-cd}
\usepackage{xcolor}

\usepackage{comment}
\usepackage{lipsum}
\usepackage{appendix}
\usetikzlibrary{matrix,arrows,decorations.pathmorphing}
\usepackage{bm}

\textwidth = 6.5 in
\textheight = 9 in
\oddsidemargin = 0.0 in
\evensidemargin = 0.0 in
\topmargin = 0.0 in
\headheight = 0.0 in
\headsep = 0.0 in
\parskip = 0.1cm
\parindent = 0.1in
\newtheorem{theorem}{Theorem}[section]
\newtheorem{lemma}[theorem]{Lemma}
\newtheorem{proposition}[theorem]{Proposition}

\newtheorem{corollary}[theorem]{Corollary}
\newtheorem*{theorem*}{Theorem}
\newtheorem*{proposition*}{Proposition}
\newtheorem*{corollary*}{Corollary}

\theoremstyle{definition}
\newtheorem{definition}[theorem]{Definition}

\theoremstyle{remark}
\newtheorem{remark}[theorem]{Remark}

\DeclareMathOperator{\Spf}{Spf}

\newcommand{\Q}{\mathbb{Q}}
\newcommand{\Z}{\mathbb{Z}}
\newcommand{\PP}{\mathbb{P}}
\renewcommand{\P}{\mathbb{P}}

\newcommand{\C}{\mathbb{C}}
\newcommand{\F}{\mathbb{F}}

\newcommand{\Fpbar}{\overline{\mathbb{F}}_p}

\newcommand{\cL}{\mathcal{L}}

\newcommand{\cO}{\mathcal{O}}

\newcommand{\cF}{\mathcal{F}}
\newcommand{\cH}{\mathcal{H}}

\newcommand{\A}{\mathbb{A}}

\newcommand{\pdiv}{\mathscr{G}}

\newcommand{\abvar}{\mathscr{A}}

\DeclareMathOperator{\Spec}{\mathrm{Spec}}
\DeclareMathOperator{\geom}{\mathrm{geom}}
\newcommand{\Sp}{\mathrm{Sp}}
\newcommand{\Spa}{\mathrm{Spa}}

\newcommand{\an}{\mathrm{an}}

\newcommand{\Pic}{\mathrm{Pic}}

\newcommand{\Gm}{\mathbb{G}_m}

\newcommand{\gal}{\mathrm{Gal}}

\newcommand{\Ag}{\mathcal{A}_g}
\newcommand{\Aguni}{\underline{\mathscr{A}}}
\newcommand{\AgK}{\mathcal{A}_{g,\mathsf{K}}}
\newcommand{\AgKbb}{\mathcal{A}_{g,\mathsf{K}}^\mathrm{BB}}
\newcommand{\AgKtor}{\mathcal{A}_{g,\mathsf{K}}^{\mathrm{tor}}}
\newcommand{\AgKpar}{\mathcal{A}_{g,{\mathsf{K}_\mathrm{par}}}}

\newcommand{\Ann}{\mathsf{A}}

\newcommand{\Dstar}{\mathsf{D}^{\times}}
\newcommand{\sD}{\mathsf{D}}
\newcommand{\sK}{\mathsf{K}}
\newcommand{\sKpar}{\mathsf{K}_{\mathrm{par}}}
\newcommand{\Dab}{\mathsf{D}^{\times} \times \mathsf{D}^b}
\newcommand{\Db}{(\mathsf{D}^\times)^a}

\newcommand{\RZ}{\mathfrak{R}\mathfrak{Z}}
\newcommand{\fA}{\mathfrak{A}}

\newcommand{\fX}{\mathfrak{X}}

\DeclareMathOperator{\Hom}{Hom}
\DeclareMathOperator{\End}{End}

\DeclareMathOperator{\GSp}{GSp}
\DeclareMathOperator{\Fil}{Fil}

\DeclareMathOperator{\rig}{rig}
\DeclareMathOperator{\Frac}{Frac}
\DeclareMathOperator{\Tate}{\mathsf{T}}
\DeclareMathOperator{\Vdr}{(\mathcal{V},\nabla)_{\textrm{dR}}}
\DeclareMathOperator{\der}{der}
\DeclareMathOperator{\ad}{ad}
\DeclareMathOperator{\per}{per}
\newcommand{\kbar}{\overline{k}}

\DeclareMathOperator{\Proj}{Proj}
\DeclareMathOperator{\red}{red}
\DeclareMathOperator{\Bl}{Bl}
\DeclareMathOperator{\good}{good}
\renewcommand{\O}{\mathcal{O}}

 \newenvironment{itemize*}
  {\begin{itemize}[topsep=-\parskip+\jot,itemsep=-\parskip-\jot]}
  {\end{itemize}}

\newenvironment{enumerate*}
  {\begin{enumerate}[label=(\alph*),topsep=-\parskip+\jot,itemsep=-\parskip-\jot]}
  {\end{enumerate}}

\newenvironment{enumerate**}
  {\begin{enumerate}[label=(\roman*),topsep=-\parskip+\jot,itemsep=-\parskip-\jot]}
  {\end{enumerate}}
  
\title{$p$-adic hyperbolicity for moduli spaces of abelian motives}
\author{Abhishek Oswal,  Ananth N. Shankar, Xinwen Zhu \smallskip\\\emph{\MakeLowercase{with an appendix by} Anand Patel}}
\date{\today}

\begin{document}
\setlist[description]{font=\normalfont\itshape\textbullet\space}
\maketitle
\begin{abstract}
We prove that Shimura varieties of abelian type satisfy a $p$-adic Borel-extension property over discretely valued fields. More precisely, let $\mathsf{D}$ denote the rigid-analytic closed unit disc and $\mathsf{D}^{\times} = \mathsf{D} \setminus \{0\}$, let $X$ be a smooth rigid-analytic variety, and let $S(G,\mathcal{H})_{\mathsf{K}}$ denote a Shimura variety of abelian type with torsion-free level structure. We prove every rigid-analytic map defined over a discretely valued $p$-adic field $\mathsf{D}^{\times} \times X \rightarrow S(G,\mathcal{H})_{\sK}^{\an}$ extends to an analytic map $\mathsf{D} \times X \rightarrow (S(G,\mathcal{H})_{\mathsf{K}}^{\textrm{BB}})^{\an}$, where $S(G,\mathcal{H})_{\mathsf{K}}^{\textrm{BB}}$ is the Baily-Borel compactification of $S(G,\mathcal{H})_{\mathsf{K}}$. We also deduce various applications to algebraicity of analytic maps, degenerations of families of abeloids, and to $p$-adic notions of hyperbolicity. Along the way, we also prove an extension result for Rapoport-Zink spaces. 

\end{abstract}

\section{Introduction}\label{sec:intro}

In 1972, Borel proved what is now known as the Borel extension property for Shimura varieties.
\begin{theorem*}[Borel, Theorem A of \cite{borel}]\label{boreloriginal}
    Let $S(G,\mathcal{H})_{\sK}$ be a Shimura variety with torsion-free level structure. Let $D$ be the complex open unit disc, and let $D^\times = D\setminus \{0\}$ be the punctured open unit disc. Let $a, b$ be non-negative integers. Then, every holomorphic map $(D^\times)^a \times D^b\rightarrow S(G,\mathcal{H})_{\sK}^{\textrm{hol}}$ extends to a holomorphic map $D^{a+b}\rightarrow (S(G,\mathcal{H})_{\sK}^\textrm{BB})^{\textrm{hol}}$, where $S(G,\mathcal{H})_{\sK}^\textrm{BB}$ denotes the Baily--Borel compactification of $S(G,\mathcal{H})_\sK$.
\end{theorem*}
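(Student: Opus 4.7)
The plan is to reduce to extending across a single puncture and then combine a Schwarz--Pick estimate on the universal cover with the structure theory of rational boundary components of $\mathcal{H}$.

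First I would reduce to the case $a = 1$, $b = 0$. Since $(S(G,\mathcal{H})_{\sK}^{\textrm{BB}})^{\textrm{hol}}$ is a normal compact complex space (sitting in some $\P^N$ via the Baily--Borel projective embedding), one can handle the punctures one at a time and then combine the resulting one-variable extensions via a Hartogs-type theorem in several variables. So it suffices to extend an arbitrary holomorphic $f \colon D^\times \to S(G,\mathcal{H})_{\sK}^{\textrm{hol}}$ across the puncture $0 \in D$ as a map into the Baily--Borel compactification. Writing $S(G,\mathcal{H})_{\sK}(\C)$ on each connected component as $\Gamma \backslash \mathcal{H}^+$, with $\mathcal{H}^+$ a connected Hermitian symmetric domain and $\Gamma \subset G(\R)$ a torsion-free arithmetic subgroup (torsion-freeness following from $\sK$ being torsion-free), and using the exponential uniformization $\mathbb{H} \to D^\times$, $z \mapsto e^{2\pi i z}$, I would lift $f$ to an equivariant holomorphic map $\tilde f \colon \mathbb{H} \to \mathcal{H}^+$ with monodromy $\gamma \in \Gamma$, the image of the generator of $\pi_1(D^\times) \cong \Z$.

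Next I would show that $\gamma$ is unipotent. By the Schwarz--Pick lemma, $\tilde f$ is distance non-increasing for the Poincar\'e metric on $\mathbb{H}$ and the Bergman metric on $\mathcal{H}^+$; since $d_{\mathbb{H}}(z, z+1) \to 0$ as $\Im(z) \to \infty$, the translation length $\inf_{x \in \mathcal{H}^+} d(x, \gamma \cdot x)$ vanishes. Torsion-freeness of $\Gamma$ rules out $\gamma$ having a fixed point in $\mathcal{H}^+$, so $\gamma$ is either trivial or a non-trivial unipotent element of $G(\R)$.

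Finally I would extend $f$ across the puncture. If $\gamma$ is trivial, $\tilde f$ descends to a holomorphic map $D^\times \to \mathcal{H}^+$, which extends across $0$ by Riemann's removable singularity theorem (as $\mathcal{H}^+$ embeds as a bounded domain in $\C^{\dim \mathcal{H}^+}$). If $\gamma$ is non-trivial unipotent, it lies in the unipotent radical of a unique minimal $\Q$-parabolic $P \subset G$, which corresponds to a rational boundary component $F \subset \overline{\mathcal{H}^+}$; Piatetski-Shapiro coordinates around $F$ combined with a horocycle estimate (applied to $\{\Im(z) = T\}$ with $T \to \infty$) show that $\tilde f(z)$ approaches a point of $F$, and so $f(q)$ approaches the cuspidal stratum attached to $F$ as $q \to 0$. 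Together with the normality of the Baily--Borel compactification and Riemann extension, this produces the desired holomorphic $D \to (S(G,\mathcal{H})_{\sK}^{\textrm{BB}})^{\textrm{hol}}$. I expect the main obstacle to be this last step: converting the analytic fact that $\tilde f$ enters arbitrarily small horoballs around $F$ into an actual extension across a cuspidal stratum of the Baily--Borel compactification requires a careful invocation of the explicit local structure of that compactification near its rational boundary components.
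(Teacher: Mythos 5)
This statement is Borel's Theorem A, which the paper cites but does not prove: it appears in the introduction purely as background motivating the $p$-adic analogue, so there is no proof in the paper to compare your attempt against. Judged against Borel's actual argument in \cite{borel}, your outline is on the right track (reduce to a single puncture, lift to $\mathbb{H}\to\mathcal H^+$, use the distance-decreasing property of holomorphic maps to force the monodromy into the ``parabolic'' regime, then invoke rational boundary components and the local structure of the Baily--Borel space), but there is a genuine gap in the unipotence step.

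You claim that zero translation length together with torsion-freeness of $\Gamma$ forces $\gamma$ to be unipotent. That inference is incorrect. Zero translation length with no fixed point only makes $\gamma$ parabolic in the geometric sense, and a parabolic element of a semisimple real Lie group is in general a commuting product of a nontrivial unipotent and an elliptic element; it need not be unipotent, and torsion-freeness of $\Gamma$ by itself does not kill the elliptic factor because that factor need not lie in $\Gamma$. What Borel actually uses is the arithmeticity (indeed $\Q$-rationality and integrality) of $\gamma$: rationality puts the multiplicative Jordan decomposition $\gamma = s\,u$ inside $G(\Q)$, integrality and the elliptic condition force the eigenvalues of $s$ to be roots of unity via a Kronecker-type argument, hence $\gamma$ is quasi-unipotent; only then does the net/torsion-free hypothesis upgrade quasi-unipotent to unipotent. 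Without invoking arithmeticity at this point the argument does not close. A smaller issue: a nontrivial unipotent $\gamma$ does not in general lie in the unipotent radical of a unique \emph{minimal} $\Q$-parabolic; the rational boundary components of the Baily--Borel compactification correspond to (conjugacy classes of) \emph{maximal proper} $\Q$-parabolics, and identifying the correct one attached to the degeneration is part of the horoball/Siegel-set analysis you correctly flag as the technical heart of the final step.
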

By combining this extension property with resolution of singularities and GAGA, Borel proves the following important algebraicity theorem in the theory of Shimura varieties.
\begin{theorem*}[Borel, Theorem 3.10 of \cite{borel}]
    Let $S(G,\mathcal{H})_{\sK}$ be as above, and let $X$ be a complex algebraic variety. Then, every holomorphic map $X^{\textrm{hol}}\rightarrow S(G,\mathcal{H})_{\sK}^{\textrm{hol}}$ is automatically an algebraic map. 
\end{theorem*}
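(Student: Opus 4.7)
The plan is to reduce to a smooth projective compactification of $X$ using resolution of singularities, apply the Borel extension theorem chart by chart to extend the map, and then invoke Serre's GAGA to deduce algebraicity.

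First, I would reduce to the case where $X$ is smooth and quasi-projective. Algebraicity of a map can be checked on an open cover and descends along proper birational morphisms (via Chow and Zariski's main theorem applied to the graph), so one may replace $X$ by a resolution of singularities and assume it is smooth. By Hironaka, choose a smooth projective compactification $\bar X$ such that $D := \bar X \setminus X$ is a simple normal crossings divisor.

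Next, I would extend the given holomorphic map $f \colon X^{\textrm{hol}} \to S(G,\mathcal{H})_{\sK}^{\textrm{hol}}$ across $D$. Around any point $p \in D$, choose analytic coordinates $z_1,\dots,z_n$ on a polydisc neighborhood $U \subset \bar X^{\textrm{hol}}$ in which $D$ is cut out by $z_1 \cdots z_a = 0$, so $U \cap X^{\textrm{hol}}$ is biholomorphic to $(D^\times)^a \times D^{n-a}$ in the notation of Borel's extension theorem. Applying that theorem produces a holomorphic extension of $f|_{U \cap X^{\textrm{hol}}}$ to $U \to (S(G,\mathcal{H})_{\sK}^{\textrm{BB}})^{\textrm{hol}}$. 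Since $X^{\textrm{hol}}$ is dense in $\bar X^{\textrm{hol}}$ and the Baily--Borel compactification is Hausdorff, these local extensions are unique and agree on overlaps, hence glue to a holomorphic map $\bar f \colon \bar X^{\textrm{hol}} \to (S(G,\mathcal{H})_{\sK}^{\textrm{BB}})^{\textrm{hol}}$.

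Finally, I would invoke GAGA. The Baily--Borel compactification $S(G,\mathcal{H})_{\sK}^{\textrm{BB}}$ is projective by Baily--Borel, and $\bar X$ is projective by construction, so Serre's GAGA implies that $\bar f$ is the analytification of a unique algebraic morphism $\bar X \to S(G,\mathcal{H})_{\sK}^{\textrm{BB}}$. Restricting to the Zariski open subvariety $X \subset \bar X$ and noting that its image lies in the locally closed subvariety $S(G,\mathcal{H})_{\sK} \subset S(G,\mathcal{H})_{\sK}^{\textrm{BB}}$, one obtains the required algebraic map.

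The main obstacle is the extension step: it relies critically on the existence of a smooth compactification with normal crossings boundary, so that Borel's extension theorem applies in appropriate local coordinates, and on the uniqueness of analytic extensions to glue the charts together. Once this is arranged, the deduction of algebraicity from GAGA is standard.
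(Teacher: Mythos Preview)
Your proposal is correct and follows exactly the approach the paper describes for this result: resolution of singularities to get a smooth compactification with simple normal crossings boundary, then the Borel extension theorem in local polydisc charts around the boundary, then GAGA. The paper itself only records this as the one-line summary ``combining this extension property with resolution of singularities and GAGA'' in the introduction (the statement is cited from Borel rather than reproved), and its detailed proof of the $p$-adic analogue in Section~\ref{sec:last} follows the same outline.
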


The main purpose of this paper is to prove analogous results in the $p$-adic setting. To that end, let $F$ denote a discretely valued $p$-adic field\footnote{The reader may assume that $F$ is a finite extension of $\Q_p$, or of $W(\Fpbar)[1/p]$.}. Let $\sD$ denote the rigid-analytic closed unit disc over $F$ and set $\Dstar = \sD\setminus \{0\}$. The main theorem of this paper is: 

\begin{theorem}\label{main}
 Let $S(G,\mathcal{H})_\sK$ denote a Shimura variety of abelian type with torsion-free level structure defined over a number field contained in $F$. Let $X$ be a smooth rigid analytic variety over $F$. Then every rigid-analytic map $f: \Dstar \times X \rightarrow S(G,\mathcal{H})_{\sK}^\an$ defined over $F$ extends to an analytic map $\sD \times X \rightarrow (S(G,\mathcal{H})^{\textrm{BB}}_{\sK})^\an$, where $S(G,\mathcal{H})^{\textrm{BB}}_{\sK}$ is the Baily-Borel compactification of $S(G,\mathcal{H})_{\sK}$.     
\end{theorem}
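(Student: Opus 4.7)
The plan is to first reduce the extension problem from a general Shimura variety of abelian type to the Siegel modular variety $\mathcal{A}_g$. By Deligne's structure theory, every Shimura variety of abelian type is related, via a central isogeny and a finite covering, to a connected component of a Hodge type Shimura variety, which in turn embeds as a closed subvariety of $\mathcal{A}_g$ for suitable auxiliary data. The Baily--Borel compactification behaves functorially under these operations. Since $\Dstar \times X$ is Zariski-dense in $\sD \times X$, an extension through the ambient Siegel Baily--Borel automatically lands in the correct sub-Baily--Borel attached to $S(G,\mathcal{H})_\sK$, so it suffices to prove the theorem for $\mathcal{A}_g$.

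\textbf{Degenerating families of abelian varieties.} A rigid-analytic map $f : \Dstar \times X \to \mathcal{A}_g^{\an}$ is the same data as a principally polarized abelian scheme $A$ on $\Dstar \times X$ together with level-$\sK$ structure. Working locally on $X$ (say with $X$ a smooth affinoid) and pulling back along $\sD \to \sD$, $z \mapsto z^n$, for an appropriate $n$, I would arrange that $A$ has semistable reduction along the divisor $\{0\} \times X$. A family version of the Raynaud--Bosch--Lütkebohmert uniformization should then present $A$ as a rigid quotient $\widetilde{G}^{\mathrm{rig}}/M$, where $\widetilde{G}$ is a semiabelian formal scheme over $\sD \times X$ of toric rank $r$ along the boundary and $M \cong \Z^r$ is a lattice of characters. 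The abelian part of $\widetilde{G}\vert_{\{0\}\times X}$ is a principally polarized abelian scheme of dimension $g-r$ and therefore defines a map $\{0\}\times X \to \mathcal{A}_{g-r}^{\an}$. Gluing this with $f$ produces the desired extension $\sD \times X \to (\mathcal{A}_g^{\mathrm{BB}})^{\an}$, sending $\{0\}\times X$ into the boundary stratum $\mathcal{A}_{g-r}$ of the Baily--Borel.

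\textbf{Analyticity, descent, and main obstacle.} Rigid-analyticity of the extension can be verified by first factoring through a toroidal compactification $\mathcal{A}_g^{\mathrm{tor}}$ for a sufficiently fine admissible rational polyhedral cone decomposition accommodating the universal Faltings--Chai degeneration data, and then composing with the continuous projection $\mathcal{A}_g^{\mathrm{tor}} \to \mathcal{A}_g^{\mathrm{BB}}$; the toroidal lift is itself produced from the Raynaud model and the lattice $M$. Galois descent from the finite cover $\sD \to \sD$ is automatic because the image in the Baily--Borel is determined by canonical Hodge/motivic invariants. The main obstacle is the uniformity in $X$: classical Raynaud--Bosch--Lütkebohmert uniformization and Faltings--Chai degeneration are formulated over a DVR or a one-dimensional formal base, whereas here one must construct a coherent semiabelian formal model of $A$ over all of $\sD \times X$. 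Doing this requires a rigid-analytic family version of the Faltings--Chai degeneration theory over higher-dimensional smooth bases; combined with the need to track the precise boundary stratum under the abelian-type to Siegel reduction, I expect this step to constitute the technical core of the proof.
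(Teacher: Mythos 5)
Your outline is a direct transplant of the complex-analytic proof: arrange (quasi-)unipotent monodromy around the puncture by a finite base change $z \mapsto z^n$, extend the family to a semi-abelian formal scheme over $\sD \times X$, and read off the Baily--Borel boundary stratum from the abelian part. This does not work $p$-adically, and the step where it breaks down is precisely the step you describe as ``arranging semistable reduction along $\{0\}\times X$.'' Over $\C$ this is supplied by Borel's monodromy theorem together with the (one-parameter) degeneration theory; over a $p$-adic field there is no analogue. The rigid-analytic punctured disc has an enormous profinite \'etale fundamental group with no distinguished ``loop around $0$,'' and there is no $p$-adic nilpotent orbit theorem or potential semi-stable reduction theorem for rigid families degenerating along a divisor in the rigid (as opposed to the mod-$\pi$) direction. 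The assertion that after $z \mapsto z^n$ the family $A$ admits a semi-abelian formal extension over a formal model of $\sD \times X$ is, up to the level-structure bookkeeping, equivalent to the theorem you are trying to prove (compare Corollary~\ref{extensionforabeloids} of the paper, which is \emph{deduced from} Theorem~\ref{main} rather than used to prove it). Raynaud--Bosch--L\"utkebohmert uniformization takes as input a semi-abelian formal model; it does not produce one.

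The gap is sharpest in the case where the family $A/\Dstar$ has good reduction mod $\pi$ everywhere. Then the ``toric rank along the boundary'' in your picture ought to be $0$, and the claim reduces to: an abelian scheme over $\Dstar$ with everywhere good reduction extends to an abelian scheme over $\sD$. Your proposal offers no argument for this, and it is in fact where the bulk of the paper's work lies (Sections~\ref{sec:lifttoRZ}--\ref{sec:extension for RZ}): lift to a Rapoport--Zink space, extend the crystalline $\Z_p$-local system across $0$ via the logarithmic $p$-adic Riemann--Hilbert correspondence and Shimizu's theorem, and then use the period map together with Hartl's and de~Jong's descriptions of its image to descend the extension back to $\RZ^{\rig}$. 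Moreover, the paper first establishes a striking rigidity phenomenon with no complex counterpart (Theorem~\ref{badisfullybad}): the abelian rank mod $\pi$ is \emph{constant} over all of $\Dstar$, proved by a prime-to-$p$ monodromy argument together with a careful analysis of formal models of rigid annuli (the appendix). Your approach never touches the arithmetic (mod-$\pi$) reduction type, and so misses both the dichotomy and the $p$-adic Hodge-theoretic input needed to handle the good-reduction stratum.
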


Along the way, we also prove an extension theorem for Rapoport-Zink spaces $\RZ^{\rig}$ (see \autoref{sec:RZstuff} for a review) -- a result interesting in its own right:

\begin{theorem}\label{introRZextension}
    Let $X$ be as in \autoref{main}. Then every rigid-analytic map $\Dstar \times X \rightarrow \RZ^{\rig}$ defined over $F$ extends analytically to a map $\sD \times X \rightarrow \RZ^{\rig}$. 
\end{theorem}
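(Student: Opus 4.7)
The plan is to use the moduli interpretation of the Rapoport--Zink space: a map $f \colon \Dstar \times X \to \RZ^{\rig}$ corresponds to a $p$-divisible group $H$ on $\Dstar \times X$ together with a quasi-isogeny $\rho$ to the pullback of the fixed $p$-divisible group defining $\RZ$, and extending $f$ across $\{0\} \times X$ is equivalent to extending the pair $(H, \rho)$ to $\sD \times X$. I would reduce to the case where $X = \Spa(A, A^+)$ is a small connected affinoid and work in a neighborhood of $\{0\} \times X$.

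Since $\RZ$ is locally formally of finite type over $\Spf \OKbreve$, it admits an open cover by formal affines $\Spf B_i$ whose rigid generic fibers $U_i := (\Spf B_i)^{\rig}$ form an admissible cover of $\RZ^{\rig}$. The crux of the argument is the following ``quasi-compactness near the puncture'' claim.

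\begin{claim}
There exist $r > 0$ and an index $i$ such that the restriction of $f$ to $\{0 < |t| \le r\} \times X$ factors through $U_i$.
\end{claim}

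Once the claim is granted, extension is routine: the map corresponds to a continuous $\OKbreve$-algebra homomorphism $B_i \to \mathcal{O}(\{0 < |t| \le r\} \times X)^{\circ}$, sending integral generators of $B_i$ to power-bounded holomorphic functions. Writing any such function as a Laurent series $\sum_{n \in \Z} a_n(x) t^n$, the ultrametric bound $|a_n(x)| \cdot |t|^n \le 1$ for all $(t, x)$ forces $a_n \equiv 0$ for every $n < 0$ upon letting $|t| \to 0$. Hence each image is a power series in $t$ convergent on $\{|t| \le r\} \times X$, producing a morphism $\{|t| \le r\} \times X \to U_i \subset \RZ^{\rig}$; gluing with $f$ on the overlap $\{r/2 \le |t| \le r\} \times X$ completes the extension.

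The main obstacle is the claim, which is a form of a $p$-adic Picard-type statement: the curve $t \mapsto f(t, x)$ cannot escape every quasi-compact subset of $\RZ^{\rig}$ as $t \to 0$. I would attack it in two steps. First, use the decomposition $\RZ = \bigsqcup_{j \in \Z} \RZ^{(j)}$ by height of the quasi-isogeny, together with connectedness of $\Dstar \times X$, to localize $f$ into a single height component $(\RZ^{(j)})^{\rig}$, which is the generic fiber of a formal scheme formally of finite type over $\Spf \OKbreve$. Second, use the period morphism $\pi \colon (\RZ^{(j)})^{\rig} \to \breve{\cF}^{\an, \mathrm{adm}}$ into the admissible locus inside a projective flag variety $\breve{\cF}$: the composition $\pi \circ f$ lands in a proper rigid-analytic space, and by the $p$-adic Riemann extension principle for maps into projective varieties it extends to $\sD \times X \to \breve{\cF}^{\an}$ with quasi-compact image. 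The most delicate step is lifting this extension back through the étale period map to produce an extension landing in $\RZ^{\rig}$, which I would attempt via deformation-theoretic classifications of $p$-divisible groups (e.g., Scholze--Weinstein's equivalence, or Breuil--Kisin theory in the discretely valued setting) to directly construct the extension of $(H, \rho)$ across $\{0\} \times X$.
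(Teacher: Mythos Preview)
Your overall architecture---pass to the period map, extend the composite into the flag variety, then lift back---is the same as the paper's, but the proposal has a genuine gap at the step you label ``$p$-adic Riemann extension principle for maps into projective varieties.'' No such principle is available as a black box: a rigid-analytic map $\Dstar \to (\PP^n)^{\an}$ need not extend across the puncture, and properness of the target buys you nothing here (this is exactly why \autoref{hyperlocalextensionforX} carries the \emph{hypothesis} of local extendability rather than concluding it). In the paper, extending $\check\pi\circ f$ to $\sD\times X\to\cF^{\an}$ is the heart of the matter and occupies all of \autoref{sec:extendinglocalsystemDab}: one first uses the canonical trivialization of $\Vdr$ on $\RZ^{\rig}$ (\autoref{thm:triviality-of-vbconnection-RZ}) so that $f^*\Vdr$ extends to the trivial bundle with connection on $\sD\times X$; then the logarithmic $p$-adic Riemann--Hilbert functor of \cite{DLLZ} forces the residue of the canonical extension to vanish, which (via \autoref{localsystemextendsifconnectiondoes}) shows that the local system $f^*\Tate$ itself extends; applying $D_{\mathrm{dR}}$ to the extended local system then extends the Hodge filtration, and that is what produces the extension of the period map. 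Landing in $\cF^{wa}$ is a further nontrivial step requiring Shimizu's crystallinity result (\autoref{Shimizu}). None of this is visible in your sketch.

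Two smaller points. First, your Laurent-series step is fine once the Claim is granted, but the Claim is doing all the work and is essentially equivalent to what you are trying to prove; the paper never establishes such a quasi-compactness statement directly. Second, the lift back through $\check\pi$ is also not soft: the paper uses that $\RZ^{\rig}\to U$ is an almost \'etale covering and that $U\hookrightarrow\cF^{wa}$ is \'etale and bijective on classical points (Hartl, de Jong), together with \autoref{lem:etale-bijection-lifting} and \autoref{lem:almost-etale-lifting}, rather than an ad hoc deformation-theoretic construction. Finally, note that the paper's actual proof of \autoref{introRZextension} is indirect: after establishing the local extension \autoref{extendRZgeneral} and then \autoref{main} for $\AgK$, it deduces \autoref{introRZextension} from the fact that $\RZ^{\rig}\to\AgK^{\an}$ is a topological cover onto its image, via \autoref{hyperlocalimpliesgeneraltopologicalcover}.
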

\begin{remark}
    Note that the extension property for maps out of $\Dstar\times X$ as in the statement of \autoref{main} and \autoref{introRZextension} immediately implies the extension property for maps out of $(\Dstar)^a\times\sD^b$ as in the statement of Borel's theorem.
    The inverse implication, however, is not automatic in non-archimedean situation. Namely, although
    every boundary point $z \in \sD \times X \setminus \Dstar\times X$ admits a neighbourhood $U$, such that $U\cap (\Dstar \times X)\subset U$ is isomorphic to $\Dstar\times \sD^b \subset \sD^{1+b}$, these neighbourhoods (together with $\Dstar\times X$) need not give a(n admissible) cover of $\sD \times X$.  Nevertheless, we shall see in \autoref{hyperlocalextensionforX} that the inverse implication still holds when the target is quasi-projective.
\end{remark}

Our main theorems have several corollaries. The first is the $p$-adic analogue of Borel's algebraicity theorem. 
\begin{corollary}\label{thm:borelalg}
    Let $V/F$ be an algebraic variety. Let $S(G,\mathcal{H})_{\sK}$ denote a Shimura variety of abelian type with torsion-free level structure. Then, every rigid-analytic map $V^{\an} \rightarrow S(G,\mathcal{H})_{\sK}^\an$ over $F$ is algebraic.
\end{corollary}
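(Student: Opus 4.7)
The plan is to emulate Borel's classical approach in the $p$-adic setting: compactify $V$, extend the composite $V^{\an} \to S(G,\mathcal{H})_{\sK}^{\an} \hookrightarrow (S(G,\mathcal{H})_{\sK}^{\textrm{BB}})^{\an}$ to the compactification using \autoref{main}, and then invoke rigid-analytic GAGA. I would first reduce to the case $V$ smooth: since the target is reduced we may assume $V$ is reduced, and given a Hironaka resolution $\pi: \tilde V \to V$, once the smooth case is known $f \circ \pi^{\an}$ is algebraic, say given by $g: \tilde V \to S(G,\mathcal{H})_{\sK}$; because $g^{\an}$ is constant on the geometric fibers of $\pi^{\an}$, the image of the algebraic graph $\Gamma_g$ in $V \times S(G,\mathcal{H})_{\sK}$ is the graph of an algebraic morphism $V \to S(G,\mathcal{H})_{\sK}$ whose analytification is $f$ (using properness of $\pi$, Zariski's main theorem applied on the normalization of $V$, and separatedness of the target).

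Assuming now $V$ is smooth, apply Hironaka to choose a smooth proper compactification $V \subset \bar V$ with boundary $D = \bar V \setminus V$ a simple normal crossings divisor. Near any boundary point the pair $(\bar V^{\an}, V^{\an})$ is locally isomorphic to $(\sD^{a+b}, (\Dstar)^a \times \sD^b)$ for some $a, b \geq 0$. Iterating \autoref{main} $a$ times --- peeling off one $\Dstar$-factor at each step, with $X$ taken to be a polydisc (smooth rigid-analytic) --- produces local analytic extensions of the composite $V^{\an} \to (S(G,\mathcal{H})_{\sK}^{\textrm{BB}})^{\an}$ across every boundary stratum. Because $S(G,\mathcal{H})_{\sK}^{\textrm{BB}}$ is projective, the strengthening of the extension property referenced in the remark following \autoref{introRZextension} and established via \autoref{hyperlocalextensionforX} then assembles these into a single rigid-analytic map $\bar f : \bar V^{\an} \to (S(G,\mathcal{H})_{\sK}^{\textrm{BB}})^{\an}$.

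Rigid-analytic GAGA, applied to the closed analytic graph of $\bar f$ inside the analytification of the projective product $\bar V \times S(G,\mathcal{H})_{\sK}^{\textrm{BB}}$, then furnishes a unique algebraic morphism $\bar h: \bar V \to S(G,\mathcal{H})_{\sK}^{\textrm{BB}}$ with $\bar h^{\an} = \bar f$. Since $f$ lands in the open subspace $S(G,\mathcal{H})_{\sK}^{\an}$, the analytic image of $\bar h|_V$ lies in $S(G,\mathcal{H})_{\sK}^{\an}$; as $V$ is Jacobson, the preimage under $\bar h|_V$ of the open $S(G,\mathcal{H})_{\sK} \subset S(G,\mathcal{H})_{\sK}^{\textrm{BB}}$ is all of $V$, so $\bar h|_V$ factors through $S(G,\mathcal{H})_{\sK}$ algebraically and analytifies to $f$. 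The main obstacle in the plan is the global extension step: \autoref{main} by itself only supplies extensions across a single boundary direction, and the remark after \autoref{introRZextension} explicitly warns that in the non-archimedean setting the local charts around boundary points need not form an admissible cover of $\bar V^{\an}$. The global extension therefore depends crucially on the quasi-projective refinement of \autoref{hyperlocalextensionforX}, which is a nontrivial technical contribution of the paper.
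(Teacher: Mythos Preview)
Your approach parallels the paper's: reduce to smooth $V$, compactify via Hironaka with SNC boundary, extend analytically to the compactification using \autoref{main}, then apply rigid GAGA. Two comments.

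First, your reduction to smooth $V$ (resolve, then descend the algebraic map back through $\pi$) is more work than needed: since algebraicity on a Zariski-dense open suffices, the paper simply replaces $V$ by its regular locus and skips the descent entirely.

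Second, and more substantively, your local extension step has a gap. You propose to ``iterate \autoref{main} $a$ times, peeling off one $\Dstar$-factor at each step, with $X$ taken to be a polydisc.'' But after a single application---with $X=(\Dstar)^{a-1}\times\sD^b$, which is \emph{not} a polydisc---the extended map lands in $(S^{\mathrm{BB}}_{\sK})^{\an}$ rather than $S_{\sK}^{\an}$, and \autoref{main} has no statement for maps whose target is already the Baily--Borel compactification, so it cannot be reapplied. The paper circumvents this by extending \emph{stratum by stratum} across the SNC boundary. Kiehl's tubular neighbourhood theorem supplies an admissible affinoid cover at each stage: first one extends across the smooth locus $\bigcup_i E_i^\circ$ of the boundary, where the local model is genuinely $\Dstar\times X$ with $X$ smooth and \autoref{main} applies directly, producing $f^\circ:(V^\circ)^{\an}\to (S^{\mathrm{BB}}_{\sK})^{\an}$; then across the double intersections $E_{ij}^\circ$, where in each chart $(V^\circ)^{\an}$ is identified with $X\times(\sD^2\setminus\{0\})$; and so on through the deeper strata. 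Because the tubular charts at each stage already form an admissible cover, the gluing obstruction you flag from the remark after \autoref{introRZextension} never arises, and one does not need to invoke \autoref{hyperlocalextensionforX} globally across an SNC divisor (indeed that result is stated only for a single smooth divisor).
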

The analytification $\Ag^{\an}$, of the moduli space of polarized abelian varieties $\Ag$, solves the moduli problem of polarized abeloids (see \cite{conradabeloids} for this fact and the definition of abeloids). Therefore, \autoref{thm:borelalg} has the following consequence to the alegbraicity of polarized families of abeloids. 
\begin{corollary}\label{algebraicityofabeloids}
    Let $V/F$ be an algebraic variety. Then every family $B/V^{\an}$ of polarized abeloids is the analytification of a polarized abelian scheme $A/V$. 
\end{corollary}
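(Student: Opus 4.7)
The plan is to combine Corollary~\ref{thm:borelalg} with the fact, recalled from \cite{conradabeloids}, that $\Ag^{\an}$ represents the rigid-analytic moduli problem of polarized abeloids. By the latter, the polarized abeloid family $(B,\lambda)/V^{\an}$ determines a morphism of rigid-analytic stacks $\phi\colon V^{\an}\to \Ag^{\an}$; it suffices to show that $\phi$ is the analytification of an algebraic morphism $V\to \Ag$, since pullback of the universal polarized abelian scheme then yields the desired $A/V$ with $A^{\an}\cong B$ as polarized abeloids.

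To reduce to the scheme-valued, torsion-free level setting of Corollary~\ref{thm:borelalg}, fix $N\geq 3$ so that $\AgN$ is a Shimura variety of Hodge type with torsion-free level structure, and form the rigid-analytic finite étale $\gl_{2g}(\Z/N\Z)$-Galois cover
\[
V' \;:=\; V^{\an}\times_{\Ag^{\an}} \AgN^{\an} \;\longrightarrow\; V^{\an},
\]
equipped with its canonical lift $\phi_N\colon V'\to \AgN^{\an}$; the cover $V'$ classifies pairs $(v,\alpha)$ with $\alpha$ a level-$N$ structure on $B_v$. The central sub-step is to show that $V'\to V^{\an}$ is the analytification of an algebraic finite étale cover $\widetilde V\to V$. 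Granting this, Corollary~\ref{thm:borelalg} applied to $\phi_N\colon\widetilde V^{\an}\to \AgN^{\an}$ promotes it to an algebraic morphism $\widetilde V\to \AgN$, and pullback of the universal polarized abelian scheme over $\AgN$ produces a polarized algebraic abelian scheme $\widetilde A\to\widetilde V$ whose analytification is canonically $B\times_{V^{\an}}\widetilde V^{\an}$. The natural $\gl_{2g}(\Z/N\Z)$-equivariance of the construction then furnishes algebraic Galois descent data for $\widetilde A$ along the Galois cover $\widetilde V\to V$, and fpqc descent for polarized abelian schemes delivers the required algebraic polarized abelian scheme $A\to V$.

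For the remaining algebraization of $V'$, the plan is to invoke \autoref{main} itself. Composing $\phi$ with the inclusion $\Ag^{\an}\hookrightarrow \AgKbb^{\an}$, replacing $V$ by a desingularization, and choosing a smooth projective compactification $\overline V$ with simple normal crossings boundary, \autoref{main} extends to an analytic morphism $\bar\phi\colon \overline V^{\an}\to \AgKbb^{\an}$. Rigid GAGA on the proper $\overline V$ then algebraizes $\bar\phi$. Since $\AgN\to \Ag$ is an algebraic finite étale cover whose analytification agrees with $\AgN^{\an}\to\Ag^{\an}$, one defines $\widetilde V$ as the algebraic pullback of $\AgN\to \Ag$ along the resulting algebraic morphism into the interior and checks $\widetilde V^{\an}=V'$ via the compatibility of analytification with finite morphisms.

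The principal obstacle is precisely this algebraization step: in the non-archimedean setting there is no general Riemann existence theorem for analytic finite étale covers of a smooth, non-proper algebraic variety, so analytic étale covers need not be algebraic. \autoref{main} is what sidesteps this issue, by effectively reducing the question to a proper base where rigid GAGA applies.
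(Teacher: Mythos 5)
Your proposal is considerably more elaborate than the paper's own argument, which consists of a single sentence: the family $B/V^{\an}$ classifies a map $V^{\an}\to \Ag^{\an}$ because $\Ag^{\an}$ solves the moduli problem of polarized abeloids (Conrad), and this map is algebraic by \autoref{thm:borelalg}. You correctly notice that this one-liner glosses over a genuine subtlety: without level structure, $\Ag$ is not a fine moduli scheme (nor a Shimura variety with torsion-free level), so a polarized abeloid family gives only a map to the moduli \emph{stack}, and one cannot simply pull back a universal family. Your plan to pass to the level-$N$ cover $V'$ and then descend is the standard way to handle this; that instinct is right, and the fpqc-descent step at the end is fine.

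The problem is the algebraization of the cover $V'\to V^{\an}$, which is where you claim ``\autoref{main} is what sidesteps this issue.'' As written this is circular. You compose $\phi$ with ``the inclusion $\Ag^{\an}\hookrightarrow \AgKbb^{\an}$'' and apply \autoref{main}; but \autoref{main} (and hence \autoref{thm:borelalg}) is stated only for Shimura varieties with \emph{torsion-free} level structure. The coarse moduli space $\Ag$ is not such a Shimura variety, and it does not sit as an open subvariety of $\AgKbb$ for a torsion-free $\sK$ (rather, $\AgK$ maps finitely \emph{onto} the coarse $\Ag$). So the theorem does not apply to the map $\phi\colon V^{\an}\to \Ag^{\an}$ that you are trying to extend. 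Establishing a Borel extension property for the coarse $\Ag$ would require a descent argument along the finite, \emph{ramified} cover $\AgN\to\Ag$; the paper's \autoref{lem:finiteetale} handles only finite \'etale covers. There is also a smaller imprecision: $V^{\an}\times_{\Ag^{\an}}\AgN^{\an}$ is a $\gl_{2g}(\Z/N\Z)$-torsor only when the fibre product is taken over the moduli \emph{stack}; over the coarse space this fails at points with extra automorphisms, and one should instead use the symplectic $\mathrm{Isom}$-scheme for $B[N]$.

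A route that does stay within the paper's toolkit: choose $N\geq 3$ with $\gcd(N,p)=1$, so the $\mathrm{Isom}$-cover $V'\to V^{\an}$ has degree prime to $p$. After replacing $V$ by a resolution and compactifying to a smooth projective $\overline V$ with SNC boundary, one reduces locally to $\Dab\subset \sD^{1+b}$, where a prime-to-$p$ finite \'etale cover is Kummer \'etale along the boundary (Abhyankar's lemma, as used in the paper's \autoref{lem:finiteetale} via the cited results of DLLZ); the cover therefore extends to a finite cover of $\overline V^{\an}$, which is algebraic by GAGA. This algebraizes $V'$ directly and dissolves the circularity, after which your remaining steps go through. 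As it stands, though, the invocation of \autoref{main} on the coarse $\Ag$ is a gap.
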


For toroidal compactifications of Shimura varieties, the extension property for maps from $\Dstar\times X$ as in \autoref{main} cannot hold in general. But the valuative criterion of properness implies the following weaker statement still holds.

\begin{corollary}\label{toroidalextension}
Let $S(G,\mathcal{H})_{\sK}$ be as above and let $S(G,\mathcal{H})^{\textrm{tor}}_{\sK}$ be some toroidal compactification. Then, every analytic map $\Dstar \rightarrow S(G,\mathcal{H})^{\an}$ defined over $F$ extends to an analytic map $\sD \rightarrow (S(G,\mathcal{H})^{\textrm{tor}}_{\sK})^\an$. 
\end{corollary}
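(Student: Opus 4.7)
The plan is to combine \autoref{main} with the properness of the natural projection $\pi\colon S(G,\mathcal{H})^{\textrm{tor}}_{\sK}\to S(G,\mathcal{H})^{\textrm{BB}}_{\sK}$ relating the toroidal and Baily--Borel compactifications. Note that $f$ automatically factors through the open immersion $S(G,\mathcal{H})^\an\hookrightarrow (S(G,\mathcal{H})^{\textrm{tor}}_{\sK})^\an$, so it makes sense to talk about extending $f$ to the toroidal compactification.

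First, I apply \autoref{main} to the composition $\pi^\an\circ f\colon \Dstar\to (S(G,\mathcal{H})^{\textrm{BB}}_{\sK})^\an$ with $X=\Spa F$, obtaining a rigid-analytic extension $\bar{f}\colon \sD\to (S(G,\mathcal{H})^{\textrm{BB}}_{\sK})^\an$. Let $p\coloneqq \bar{f}(0)$.

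Next, I localize near $p$. Choose an affinoid neighbourhood $U\subset (S(G,\mathcal{H})^{\textrm{BB}}_{\sK})^\an$ of $p$ and an affinoid subdomain $V\subset \sD$ containing $0$ with $\bar{f}(V)\subset U$. Since $\pi$ is a proper (in fact projective) morphism of algebraic varieties, its analytification is proper, and consequently so is the base change $\pi^{-1}(U)\to U$. The map $f$ restricts to an analytic morphism $V\setminus\{0\}\to\pi^{-1}(U)$ that is compatible, via $\pi^\an$, with $\bar{f}|_V\colon V\to U$. Thus, the original extension problem is reduced to extending a map from $V\setminus\{0\}$ across the single point $0\in V$ into a proper rigid-analytic space over $U$.

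Finally, I invoke the valuative criterion of properness. The completed local ring of $V$ at $0$ is a complete discrete valuation ring with residue field $F$ and fraction field corresponding to the ``formal generic point'' of $V$ at $0$. The restriction $f|_{V\setminus\{0\}}$ and the Baily--Borel extension $\bar f|_V$ fit into a commutative square with the proper morphism $\pi^{-1}(U)\to U$; properness supplies a unique dotted arrow extending $f$ at the level of the formal disc, which compatibly algebraizes to an analytic map on a neighborhood of $0$ in $V$ using that $\pi^{-1}(U)\to U$ is projective (we may work with homogeneous coordinates and appeal to the rigid-analytic Riemann extension theorem, which forces bounded analytic functions on the punctured disc to extend). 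By uniqueness, this analytic extension glues with $f$ on $\Dstar$ to give the desired map $\sD\to (S(G,\mathcal{H})^{\textrm{tor}}_{\sK})^\an$.

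The only real subtlety is the final passage from the formal extension afforded by the valuative criterion to a genuine rigid-analytic extension on a neighbourhood of $0$; this is where projectivity of $\pi$ (rather than just properness) is convenient, as it lets one work one coordinate at a time and invoke standard Laurent-series growth estimates.
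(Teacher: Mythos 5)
Your proposal follows exactly the route the paper itself indicates: the paper's proof of this corollary is a single sentence, ``The proof is similar to the complex case, and uses the valuative criterion for properness,'' and your argument fleshes out precisely that idea (compose with $\pi$, extend to the Baily--Borel via \autoref{main}, then lift the extension using properness of $\pi$). So this is the same approach, not a different one.

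Two remarks on the details you add, since the paper gives none. First, you are right to flag that obtaining a rigid-analytic extension from the formal/valuative-criterion extension is the genuine subtlety; in fact once you invoke projectivity and the rigid-analytic Riemann extension theorem (bounded functions on $\Dstar$ extend), the valuative-criterion step becomes logically redundant---the projective-coordinate argument already produces and uniquely determines the extension, so you could streamline by dropping the detour through the formal disc. Second, your argument uses projectivity of $\pi$, not just properness; this is fine if one is free to choose a polarizable cone decomposition (as ``some toroidal compactification'' suggests), but if the corollary is meant to hold for an arbitrary toroidal compactification one would need the properness-only version of the removable-singularity argument. Neither point affects the verdict that your proof matches the paper's intended approach.
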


Consequently, we have the following corollary. 
\begin{corollary}\label{extensionforabeloids}
    Let $B/\Dstar$ denote a family of polarized abeloids defined over $F$. Suppose that $B[n]$ is constant on $\Dstar$ for some $n \geq 3$. Then, $B/\Dstar$ extends to a polarized family of semi-abeloids $\bar{B} / \sD$. Further, if $B_s$ has good reduction for some point $s\in \Dstar$, then $\bar{B}/\sD$ is actually a family of abeloids. 
\end{corollary}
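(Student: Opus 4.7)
I reduce to the extension theorems for the Siegel modular variety $\AgK$, where $\sK$ is the principal level-$n$ congruence subgroup of $\GSp_{2g}(\hat\Z)$; for $n \geq 3$, $\sK$ is torsion-free. The polarization together with the constant level structure $B[n]$ defines, via the moduli interpretation of $\AgK^{\an}$ for polarized abeloid families (Conrad \cite{conradabeloids}), a rigid-analytic map $f : \Dstar \to \AgK^{\an}$ defined over $F$.

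For the semi-abeloid extension, I apply \autoref{toroidalextension} to $f$ to obtain an extension $\bar{f} : \sD \to (\AgKtor)^{\an}$ into a toroidal compactification. Pulling back the Mumford--Faltings--Chai universal semi-abelian scheme $\Aguni \to \AgKtor$ with its extended degenerate polarization produces a polarized rigid-analytic family of semi-abelian objects on $\sD$; this yields (via the standard Raynaud-type construction on the analytic side) the desired polarized family $\bar{B}/\sD$ of semi-abeloids, which restricts to $B$ on $\Dstar$ by construction.

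For the abeloid refinement, I also apply \autoref{main} to $f$ to obtain $\bar{f}_{BB} : \sD \to (\AgKbb)^{\an}$. Since $f$ factors through the open stratum $\AgK \subset \AgKbb$ and the natural map $\AgKtor \to \AgKbb$ is the identity on $\AgK$, it suffices to show $\bar{f}_{BB}(0) \in \AgK$; in that case $\bar{f}(0) \in \AgK$ and $\bar{B}_0$ is an abeloid. The good-reduction hypothesis at $s$ forces the Raynaud uniformization of $B_s$ to have trivial toric part; combined with the constancy of $B[n]$ for $n \geq 3$ (which trivializes local monodromy around $0$ and stabilizes the weight filtration on the relative Tate module of $B$), this should propagate the vanishing of the toric part through the family and, via a formal-integral model argument, to the limiting fiber at $0$, giving $\bar{f}_{BB}(0) \in \AgK$ as required.

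\textbf{Main obstacle.} The principal difficulty is the propagation argument in the final step: translating the good-reduction hypothesis at a single interior point $s$ into a statement about the semi-abeloid fiber $\bar{B}_0$ at the boundary point $0 \in \sD$. The key inputs---constancy of $B[n]$, trivialization of local monodromy, and the compatibility of the Raynaud uniformization with the Mumford--Faltings--Chai degeneration data at the boundary of $\AgKtor$---must be combined carefully. The preceding steps, namely the construction of $f$, the extension via \autoref{toroidalextension}, and the pullback of the universal family, are essentially formal consequences of the paper's extension theorems and the moduli interpretation of $\AgK^{\an}$.
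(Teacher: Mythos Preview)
Your approach to the first claim---extending $B/\Dstar$ to a semi-abeloid family via \autoref{toroidalextension} and pullback of the universal semi-abelian scheme over $\AgKtor$---is correct and is exactly the paper's argument.

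For the ``further'' claim, however, your proposal has a genuine gap. You correctly reduce to showing $\bar f_{BB}(0)\in\AgK$, but your ``propagation argument'' (constancy of $B[n]$, monodromy, weight filtration, formal-integral model) is only a heuristic, and you yourself flag it as the main obstacle. The missing ingredient is \autoref{badisfullybad} (or \autoref{badannulusisfullybad}): for any rigid-analytic map $\Dstar\to\AgK^{\an}$ over a discretely valued field, the abelian rank of the fibers is \emph{constant} on $\Dstar$. Thus good reduction at a single $s\in\Dstar$ forces $f(\Dstar)\subset\AgK^{\good}$. Once this is known, the proof of \autoref{main} in the good-reduction case (lifting to a Rapoport--Zink space via \autoref{thm:maptoRZ} and extending there via \autoref{introRZextension}) shows that the extension $\sD\to(\AgKbb)^{\an}$ actually factors through $\AgK^{\good}\subset\AgK$; in particular $\bar f(0)\in\AgK$ and $\bar B_0$ is an abeloid.

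Note that the constancy-of-abelian-rank statement is not a soft consequence of constant $n$-torsion: its proof in \autoref{sec:allgoodallbad} requires a delicate analysis of $\ell$-adic local systems on thin annuli (via \autoref{templempione} and \autoref{galthinannulus}), the structure of formal models of annuli from the appendix, and a rigid-subanalytic argument to pass from thin to thick annuli. Your sketch gestures toward some of these ingredients but does not assemble them into a proof; you should invoke \autoref{badisfullybad} directly.
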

\autoref{thm:borelalg} and \autoref{main} also have similar implications to the algebraicity of analytic families of K3 surfaces over algebraic bases, and to degenerations of families of K3 surfaces -- where at least one fiber has good reduction -- over $\Dstar$.
We will prove all these corollaries in Section \ref{sec:last}.

\subsection{Applications to \texorpdfstring{$p$}{p}-adic hyperbolicity}
Our main theorems also have applications to $p$-adic notions of hyperbolicity. In \cite{Brunebarbe}, Brunebarbe discusses different notions of hyperbolicity over the complex numbers and amongst other things, shows that Shimura varieties satisfy these notions. In the $p$-adic setting, there have been several attempts at formulating analogues of hyperbolicity  (see for example \cite{javanpeykar-brodyhyperbolic} and \cite{Jacksonhyperbolicity}). Consider the following definition of Brody hyperbolicity over a discretely valued field: 
\begin{definition}\label{discretebrody}
    Let $S$ denote an algebraic variety over the discretely valued $p$-adic field $F$. We say that $S$ is \emph{discretely Brody hyperbolic} if:
    \begin{enumerate}
        \item For every abelian variety $A$ defined over a finite extension $F'$ of $F$, every map $A\rightarrow S \times_F F'$ is constant.
        \item Every rigid-analytic map $\Gm^{\an} \rightarrow (S \times_F F')^\an$ defined over a finite extension $F'$ of $F$ is constant.
    \end{enumerate}
\end{definition}
\autoref{thm:borelalg} implies that Shimura varieties of abelian type are discretely Brody hyperbolic! Indeed, \autoref{thm:borelalg} implies that any such map to a Shimura variety of abelian type $\Gm^\an \rightarrow S(G,\cH)^{\an}_{\sK}$ must be algebraic. There are no non-constant algebraic maps from $\Gm$ to a Shimura variety with torsion-free level structure (such a map would give a non-constant holomorphic map $\C \rightarrow S(G,\cH)_{\sK}^{\textrm{hol}}$, but it is known that there does not exist such map). In fact, \autoref{thm:borelalg} implies that every rigid-analytic map $U^{\an}\rightarrow S(G,\cH))_{\sK}$ defined over a finite extension of $F$ must also be constant, where $U$ is a smooth simply connected algebraic variety! Indeed, Deligne proves (in \cite{Delignefixedpart}) that every algebraic map $U\rightarrow S(G,\cH)_{\sK}$ from a smooth simply connected variety to a Shimura variety must be constant. The result in \emph{loc. cit.} proves that any map from a smooth variety with finite fundamental group must be constant. This can be leveraged to obtain the following corollary: 
\begin{corollary}\label{nomapsfromgroups}
    Let $\mathbb{G}$ denote an algebraic group defined over the discretely valued non-archimedean field $F$. Then, every rigid-analytic map $\mathbb{G}^{\an} \rightarrow S(G,\cH)^\an_{\sK}$ defined over a finite extension of $F$ is constant.
\end{corollary}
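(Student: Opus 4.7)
The plan is to first algebraize $f$ via \autoref{thm:borelalg}, then combine Deligne's rigidity result with the two atomic cases of Brody hyperbolicity recorded in \autoref{discretebrody} by peeling off the structure of $\mathbb{G}$ layer by layer. By \autoref{thm:borelalg} applied over the finite extension of $F$ on which $f$ is defined, the rigid-analytic map $f: \mathbb{G}^{\an} \to S(G,\cH)_{\sK}^{\an}$ is the analytification of an algebraic morphism. Since each connected component of $\mathbb{G}$ is a translate of the identity component $\mathbb{G}^\circ$, I may assume $\mathbb{G}$ is connected.

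By Chevalley's structure theorem there is a short exact sequence $1 \to L \to \mathbb{G} \to A \to 1$ with $L$ a connected linear algebraic group and $A$ an abelian variety. The key tool is the following coset-descent observation: if $N \lhd H$ is a closed normal subgroup of an algebraic group and every morphism of varieties $N \to S(G,\cH)_{\sK}$ is constant, then any morphism $f: H \to S(G,\cH)_{\sK}$ is constant on each coset $hN$ (since $n \mapsto f(hn)$ defines a morphism $N \to S(G,\cH)_{\sK}$), and hence factors through $H/N \to S(G,\cH)_{\sK}$. Given this, \autoref{discretebrody}(1) reduces the problem to showing that every morphism $L \to S(G,\cH)_{\sK}$ is constant.

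I unwind $L$ via the unipotent radical $U \lhd L$ and the Levi decomposition $L/U = R = R_{\mathrm{der}} \cdot Z(R)^\circ$. In characteristic zero $U$ is isomorphic as a variety to $\mathbb{A}^n$, hence simply connected, so Deligne's rigidity theorem (\cite{Delignefixedpart}) implies every morphism $U \to S(G,\cH)_{\sK}$ is constant. The simply connected cover $\widetilde{R}_{\mathrm{der}} \to R_{\mathrm{der}}$ is surjective with smooth simply connected source, so Deligne applies again and shows morphisms $R_{\mathrm{der}} \to S(G,\cH)_{\sK}$ are constant. The abelianization $R / R_{\mathrm{der}}$ is a torus that splits as $\Gm^m$ after a further finite base extension; then \autoref{discretebrody}(2), applied iteratively one coordinate at a time (fixing all but one factor and using Zariski density of the resulting curves), shows morphisms $\Gm^m \to S(G,\cH)_{\sK}$ are constant. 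Applying the coset-descent observation successively to $U \lhd L$, to $R_{\mathrm{der}} \lhd R$, and finally to the resulting torus, one obtains constancy of every morphism $L \to S(G,\cH)_{\sK}$, which combined with the first step proves the corollary.

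The main subtlety is that $f$ is not assumed to be a group homomorphism, so one cannot simply restrict to a subgroup and read off the conclusion; the coset-descent observation is the vehicle that makes the structure-theoretic decomposition of $\mathbb{G}$ interact productively with the constancy statements. Once it is in place, the rest is bookkeeping that reduces to the three atomic cases: smooth simply connected source (Deligne), abelian variety source (\autoref{discretebrody}(1)), and torus source (\autoref{discretebrody}(2)).
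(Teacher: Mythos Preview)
Your proposal is correct and follows the same strategy as the paper: algebraize via \autoref{thm:borelalg}, then peel off the structure of $\mathbb{G}$ into its atomic pieces (abelian, torus, unipotent, semisimple) and dispatch these using Deligne's fixed-part theorem together with the hyperbolicity of Shimura varieties. The only cosmetic differences are that the paper passes to $\mathbb{C}$ explicitly and handles abelian varieties, tori, and unipotent groups uniformly via their holomorphic universal cover $\mathbb{C}^g$, whereas you make the coset-descent mechanism explicit and cite the $p$-adic hyperbolicity from the introduction (note that \autoref{discretebrody} is a \emph{definition}; what you are actually invoking is the assertion immediately after it that Shimura varieties of abelian type satisfy it, and your appeals to Deligne still implicitly require an embedding into $\mathbb{C}$).
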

Deducing this corollary from \autoref{thm:borelalg} should be well known to experts. Nonetheless, we include a brief sketch in Section \ref{sec:last} for completeness. 

Using work of Nadel \cite[Theorem 0.1]{Nadel}, our algebraization result \autoref{thm:borelalg} implies the following corollary: 

\begin{corollary}\label{cor:Nadel}
    Let $N \geq \max \{28,g(g+1)/2 \}$ and let $\mathcal{A}_{g,1,N}$ denote the moduli space of principally polarized abelian varieties with full level $N$ structure. Then, there is no non-constant rigid-analytic morphism $C^{\an} \rightarrow \mathcal{A}_{g,1,N}^{\an}$ defined over $F$, where $C/F$ is a smooth algebraic curve whose (smooth) compactification has genus $\leq 1$. 
\end{corollary}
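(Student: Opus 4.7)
The plan is to combine the algebraization result \autoref{thm:borelalg} with Nadel's theorem. First, I would invoke \autoref{thm:borelalg} to reduce the claim to its algebraic counterpart: any rigid-analytic morphism $\varphi \colon C^{\an}\to \mathcal{A}_{g,1,N}^{\an}$ defined over $F$ is automatically algebraic, and so comes from a morphism of $F$-varieties $f\colon C \to \mathcal{A}_{g,1,N}$. It therefore suffices to show that every such $f$ is constant.

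Next, since constancy of an algebraic morphism is a geometric property and both $C$ and $\mathcal{A}_{g,1,N}$ are of finite type over $F$ (with $\mathcal{A}_{g,1,N}$ in fact defined already over $\mathbb{Q}(\zeta_N)$), I would spread out $f$ over a finitely generated subfield $K\subset \overline{F}$, then embed $K$ into $\mathbb{C}$ (which is possible since such a $K$ has at most countable transcendence degree over $\mathbb{Q}$) and base change to obtain a morphism $f_{\mathbb{C}}\colon C_{\mathbb{C}} \to (\mathcal{A}_{g,1,N})_{\mathbb{C}}$ over $\mathbb{C}$, where $C_{\mathbb{C}}$ is still a smooth complex algebraic curve whose smooth compactification has genus at most one.

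Finally, \cite[Theorem 0.1]{Nadel} asserts precisely that for $N\geq \max\{28,\,g(g+1)/2\}$ every morphism from such a curve to $(\mathcal{A}_{g,1,N})_{\mathbb{C}}$ is constant, which forces $f_{\mathbb{C}}$, hence $f$, hence $\varphi$, to be constant.

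All of the substance of the argument is packaged into \autoref{thm:borelalg}; beyond that, only a formal spreading-out/base-change step and a black-box appeal to Nadel's theorem are required, so I do not anticipate any real obstacle in writing this out carefully.
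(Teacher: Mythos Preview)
Your proposal is correct and follows exactly the approach the paper takes: the paper states that \autoref{cor:Nadel} ``follows directly from \autoref{thm:borelalg} and \cite[Theorem 0.1]{Nadel}'', and your argument unwinds precisely this, with the additional (standard) detail of descending to a finitely generated subfield and embedding into $\mathbb{C}$ to apply Nadel's result.
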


\subsection{Outline of proof}
Our proof does not parallel Borel's proof in the archimedean case. The proof of \autoref{main} blends $p$-adic and $\ell$-adic methods together with techniques from rigid-analytic geometry. To ease exposition, we will stick to the case of $\Dstar \rightarrow \Ag$, where $\Ag$ is the moduli space of polarized abelian varieties with suitably fine level structure. We will remark on the setting of $\Dstar \times X$ when appropriate. We write $\Dstar = \cup_{n\geq 0} \Ann_n$, where $\Ann_n$ is the closed annulus with outer radius 1 and inner radius $\frac{1}{p^n}$. The standing assumption will be that all spaces and maps are defined over the discretely valued field $F$. The outline goes as follows:

\subsection*{Reducing to the good reduction locus}
Working backwards, one should expect that any map $f: \Dstar \rightarrow \Ag^\an$ should have the property that $f(\Dstar_{\epsilon})$ is entirely contained inside the good-reduction locus, or entirely contained inside the bad reduction locus. Here, $\sD_{\epsilon} \subset \sD$ is the closed unit disk of radius $\epsilon$, and $\Dstar_{\epsilon} = \sD_{\epsilon} \cap \Dstar$. Using the $\ell$-adic monodromy of abelian varieties in conjunction with the interplay between formal schemes and rigid-analytic varieties, we in fact are able to prove that every $x\in f(\Dstar)$ has the same reduction type. It is striking that we do not have to shrink $\Dstar$! By analyzing the geometric structure of $\Ag^{\textrm{BB}}$ and $\Ag^{\textrm{tor}}$ around the boundary, we reduce to prove \autoref{main} in the case when $f(\Dstar)$ is entirely contained in the good reduction locus $\Ag^{\good}$ of $\Ag^{\an}$.

\subsection*{Lifting to Rapoport-Zink space}
In the appendix, Patel proves that the special fiber of a formal model $\fA$ of a closed rigid-analytic annulus must have a very specific form. Using this classification, we prove that the mod $p$ special fiber of a formal abelian scheme defined over $\fA$ is quasi-isogenous to some abelian variety defined over a finite field. Together with the fact that the map from a Rapoport-Zink space to $\Ag$ is a topological cover onto its image, we show that every map $f: \Dstar\rightarrow \Ag^\an$ whose image is contained in the good reduction locus lifts to a map $\Dstar\rightarrow \RZ$! This reduces \autoref{main} to \autoref{introRZextension}, i.e. to proving that Rapoport-Zink spaces satisfy the Borel-extension property. 

\subsection*{Extension for Rapoport-Zink spaces}
We thus reduce \autoref{main} to \autoref{introRZextension}.
As above, our exposition will be in the setting of a map $\tilde{f}: \Dstar \rightarrow \RZ$. There are two steps in proving that $\tilde{f}$ extends to an analytic map from $\sD$, and both steps make crucial use of the family of (fiberwise crystalline) Galois representations paremeterized by Rapoport-Zink spaces. First, we use inputs related to the $p$-adic Riemann-Hilbert correspondence \cite{DLLZ} and Shimizu's work \cite{koji} on $p$-adic local systems to prove that the family of (fiberwise crystalline) Galois representations on $\Dstar$ extends to a family of crystalline Galois representations on $\sD$. The second step involves a careful analysis of the period map $\check{\pi}: \RZ \rightarrow \cF$ where $\cF$ is an appropriate flag variety. We use Hartl's description of the image of the period map to prove that the map $\check{\pi} \circ \tilde{f}$ extends to a rigid-analytic map $g: \sD \rightarrow \cF$ with $\textrm{Im}(g) \subset \textrm{Im}(\check{\pi})$. Finally, we use De Jong's analysis of $\check{\pi}$ in \cite{deJong-etale} to extend $\tilde{f}$.

\subsection{Prior work}
In \cite{Cherry}, Cherry proves that for a smooth, projective algebraic curve over $\C_p$ of genus at least 2, any rigid-analytic map $\Dstar \rightarrow C^{\an}$ must extend analytically to a map $\sD \rightarrow C^{\an}$. Cherry and Ru in \cite{cherry-ru} prove a non-archimedean big Picard style result, using which Sun, in \cite{sun-hyperbolicity-of-mg}, proves the $\C_p$-analogue of \autoref{thm:borelalg} for $\mathcal{M}_g$, the moduli space of genus $g$ curves. We remark that our methods are completely different from these previous results, and in many cases give alternative proofs (including higher-dimensional extension theorems) for maps defined over discretely valued fields.

\subsection{Organization of the paper}
In Section \ref{sec:preliminaries}, we recall basic facts of Shimura varieties and Rapoport-Zink spaces, and reduce the main theorem to the case of $\Ag$. In Section \ref{sec:badred}, we reduce to the case of the good-reduction locus. In Section \ref{sec:lifttoRZ}, we prove that any map $\Dab\rightarrow \Ag^{\good}$ can be lifted to a map $\Dab \rightarrow \RZ$ for an appropriate Rapoport-Zink space. In Section \ref{sec:extension for RZ}, we prove \autoref{introRZextension}. Finally, in Section \ref{sec:last}, we put the results of previous sections together to deduce \autoref{main} and all the corollaries, including \autoref{thm:borelalg}.

\subsection{Notation and conventions}
Throughout we let $p$ denote a prime number, $F$ denote a discretely valued $p$-adic field. The reader may assume that $F$ is a finite extension of either $\Q_p$ or $W(\Fpbar)[1/p]$. We let $\cO_F$ denote the valuation subring of $F$ with uniformizer $\pi$, and $\F$ denote the residue field of $F$. We regard rigid-analytic varieties as adic spaces. We will freely use the equivalence of categories between quasi-separated  rigid-analytic varieties over $F$ in the sense of Tate and quasi-separated adic spaces locally of finite type over $F$. Given a rigid-analytic variety $\Sp(R)$ (in the sense of Tate) associated to an affinoid $F$-algebra $R$, we let $\Spa (R,R^+)$ denote the corresponding adic space associated to the affinoid algebra $(R,R^+)$ with $R^+ = R^\circ$ being the subring of $R$ consisting of power-bounded elements. Unless mentioned otherwise, any rigid-analytic variety we consider will be of locally finite type and any affinoid variety will be of topologically finite type. We will also use the fact that the category of taut adic spaces is equivalent to the category of Hausdorff strictly analytic Berkovich spaces.
For an algebraic variety $X$ over $F$, we denote by $X^\mathrm{an}$ the rigid analytic variety over $F$ associated to $X$. Given an admissible formal $\cO_F$-scheme $\mathfrak{X} \rightarrow \mathrm{Spf}(\cO_F)$ (in the sense of \cite{bosch}), we denote the rigid analytic generic fiber of $\mathfrak{X}$ (in the sense of Raynaud) by $\mathfrak{X}^\mathrm{rig}.$ Given a scheme $X \rightarrow \Spec \cO_F$ that is locally of finite type, we will let $X^{\rig}$ denote the rigid analytic generic fiber of the formal scheme obtained by $p$-adically completing $X$. The Tate algebra over $F$ in the variables $t_1,\ldots,t_n$ will be denoted by $F\langle t_1,\ldots,t_n \rangle$.

We denote by $\mathsf{D}$ the rigid analytic closed unit disk over $F$, and by $\Dstar := \mathsf{D}\setminus \{0\}$ the rigid closed unit disk over $F$ punctured at the origin. We let $\sD_{\epsilon}$ and $\Dstar_{\epsilon}$ denote the analogous objects with radius $\epsilon$.

For Shimura datum $(G,\mathcal{H})$ and a compact open subgroup $\mathsf{K} \subset G(\mathbb{A}_f)$, we say that the level structure $\mathsf{K}$ is \emph{torsion-free} if every connected component of the complex Shimura variety $S(G,\mathcal{H})_{\sK} = G(\Q)\backslash (\mathcal{H}\times G(\mathbb{A}_f))/\mathsf{K}$ is of the form $\Gamma_i\backslash \mathcal{H}_i$ for a connected component $\mathcal{H}_i$ of $\mathcal{H}$ and a \emph{torsion-free} congruence subgroup $\Gamma_i \subseteq G(\Q).$ We let $S(G,\mathcal{H})^{\textrm{BB}}_{\sK}$ denote the Baily-Borel compactification of $S(G,\mathcal{H})_{\sK}$.

\subsection*{Acknowledgements}
It is a pleasure to thank H\'el\`ene Esnault, Ben Howard, Kai-Wen Lan, Keerthi Madapusi, Frans Oort, Congling Qui, Michael Rapoport, Koji Shimizu and Salim Tayou for helpful comments and conversations. A.S. is partially supported by the NSF grants DMS-2337467 and DMS-2338942, and would like to thank MSRI for their hospitality during the spring of 2023.
X.Z. is partially supported by the National Science Foundation under agreement Nos. DMS-1902239 and DMS-2200940 and by a grant from the Institute for Advanced Study School of Mathematics.

\section{Preliminaries and reduction to \texorpdfstring{$\mathcal{A}_g$}{Ag}}\label{sec:preliminaries}
\subsection{Moduli of abelian varieties and $p$-divisible groups} \label{sec:RZstuff}

Let $(V,\psi)$ denote the unique non-degenerate symplectic $\Q$-vector space with dimension $2g$. Let $\GSp$ denote the group of symplectic similitudes over $\Q$ defined by $(V,\psi)$. Fix a lattice $L \subset V\otimes \Q_p$, which satisfies $L \subset L^{\vee} \subset \frac{1}{p}L$, where $L^{\vee} \subset V\otimes \Q_p$ denotes the lattice dual to $L$ under $\psi$. The subset of $\GSp(\Q_p)$ stabilizing the lattice chain $L\subset L^{\vee}$ is a maximal parahoric subgroup of $\GSp(\Q_p)$, and every maximal parahoric arises this way (see \cite[Chapter 3]{RZ}). We note that every compact open subgroup of $\GSp(\Q_p)$ is contained in a maximal parahoric subgroup. Recall that $\GSp$ admits a natural Shimura data. For $\sK \subset \GSp(\A_f)$ a compact open subgroup, we let $\AgK$ denote the associated Shimura variety viewed over $\Q$ with level structure defined by $\sK$. Recall that $\AgK$ can be identified with the moduli space of $g$-dimensional polarized abelian varieties. Whether or not the moduli problem is fine and indeed the precise moduli problem itself depends on the choice of level structure. We will now define various notions associated to level structure. 

Let $\sK_p = \sK \cap \GSp(\Q_p)$, and let $(\sKpar)_p$ denote a maximal parahoric subgroup containing $\sK_p$. 
\begin{definition}\label{defn: level at p and ell}
\begin{enumerate}
    \item We say that the level structure is \emph{maximal parahoric at $p$} if $\sK = \sK^p \sK_p$ with $\sK_p = (\sKpar)_p$ for some maximal parahoric subgroup, where $\sK^p$ is the prime-to-$p$ part of $\sK$. We denote the Shimura variety by $\AgKpar$ if the level structure is maximal parahoric at $p$.     

    \item Let $\ell$ be a prime. We say $\AgK$ has full level-$\ell$ structure if $\sK_{\ell}$ is contained in the principal congruence subgroup at $\ell$.
\end{enumerate}
\end{definition}

Suppose now that $\ell \neq p$ is an odd prime, and that $\AgKpar$ has full level structure at $\ell$. Then, the level structure is torsion free and $\AgKpar$ is a fine moduli space. We let $\Aguni_{\sKpar} \rightarrow \AgKpar$ denote the universal polarized abelian scheme over $\AgKpar$. For brevity we will refer to the universal family over $\AgKpar$ as $\Aguni \rightarrow \AgKpar$ for all choices of level structure as above. The family $\Aguni \rightarrow\AgKpar$ admits a canonical $\Z_p$-model, which we also denote by $\Aguni \rightarrow \AgKpar$. The abelian scheme $\Aguni\rightarrow \AgKpar$ is a polarized abelian scheme with prime-to-$p$ level structure, such that the $p$-power part of the kernel of the polarization is a $p$-torsion subgroup.

\begin{definition} \label{definition:TateVdR} We let $\Tate/\Ag\otimes \Q_p$ denote the relative $p$-adic \'etale cohomology and $\Vdr/\Ag$ denote the relative de Rham cohomology with Gauss-Manin connection of the universal abelian scheme over $\Ag\otimes \Q_p$.  
\end{definition}

We recall the definition of Rapoport-Zink spaces, which are local analogues of moduli of abelian varieties.
Let $\mathrm{Nilp}_{W(\Fpbar)}$ denote the category of $W(\Fpbar)$-algebras in which $p$ is nilpotent. We fix $L\subset L^\vee\subset \frac{1}{p}L \subset V\otimes \Q_p$ as before.

We fix a pair $\mathbb X=(\pdiv_0,\lambda_0)$ where $\pdiv_0$ is a $p$-divisible group over $\Fpbar$ and $\lambda_0: \pdiv_0\to \pdiv_0^\vee$ is a polarization with $\ker(\lambda_0)\subseteq \pdiv_0[p]$ and the degree of $\deg \lambda_0$ equals the index of $L$ in $L^{\vee}$.
\begin{definition}
We let
$\RZ_{\mathbb X}$ be the functor from $\mathrm{Nilp}_{W(\Fpbar)}$ to sets, such that $R\mapsto \{(\pdiv,\lambda,\rho) \}/\sim$ where $(\pdiv,\lambda)$ is a polarized $p$-divisible group over $S=\Spec R$, satisfying the following conditions: 
\begin{enumerate}
    \item $\rho: \pdiv_0 \times S_{\Fpbar} \rightarrow \pdiv \times_{S} S_{\Fpbar}$ is a quasi-isogeny of $p$-divisible groups.
    \item $\ker(\lambda) \subseteq \pdiv_0[p]$.
    \item The degree of the polarization $\deg \lambda$ equals the index of $L$ in $L^{\vee}$.
    \item The polarizations are compatible under pullback up to a scalar, i.e. $\rho^*(\lambda) = c\lambda_x$ for some  $c\in \Q_p^{\times}$
\end{enumerate}
Two data $(\pdiv,\lambda,\rho )$ and $(\pdiv',\lambda',\rho')$ defined over $R$ are said to be equivalent if there is an isomorphism between $(\pdiv,\lambda)$ and $(\pdiv',\lambda')$ whose mod $p$ reduction commutes with the quasi-isogenies from $\pdiv_0$.
\end{definition}
Note that Rapoport-Zink defined these spaces in a more general setting. We only consider special cases, namely those associated to $\GSp_{\Q_p}$ with maximal parahoric levels, which are sufficient for our purposes.

 Rapoport-Zink proved (\cite{RZ}) that $\RZ_{\mathbb X}$ is represented by a formal scheme (which we shall also denote by $\RZ_{\mathbb X}$) over $W(\Fpbar)$. Let $\RZ_{\mathbb X}^{\mathrm{rig}}$ denote its rigid generic fiber over $F=W(\Fpbar)[1/p]$. 
Let $\Tate/\RZ_{\mathbb X}^{\mathrm{rig}}$ denote the dual of the $p$-adic Tate module of the universal $p$-divisible group over $\RZ_{\mathbb X}^{\mathrm{rig}}$, which is a $\mathbb Z_p$-local system on $\RZ_{\mathbb X}^{\mathrm{rig}}$. One can also define the relative de Rham cohomology $\Vdr$ with the Gauss-Manin connection. However, by the universal quasi-isogney $\Vdr$ is canonically trivial, isomorphic to $(\mathbb{D}_0\otimes \mathcal O_{\RZ_{\mathbb X}^{\mathrm{rig}}},d)$, where $\mathbb{D}_0$ is the rational contravariant Dieudonne module of $\pdiv_0$. We record this for future use. 

\begin{lemma}\label{thm:triviality-of-vbconnection-RZ}
The universal vector bundle with connection restricted to the generic fiber $\Vdr|_{\RZ^{\rig}_{\mathbb{X}}}$ admits a canonical trivialization induced by the universal quasi-isogeny on $\RZ_{\mathbb{X}}$.
\end{lemma}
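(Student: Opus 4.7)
The plan is to combine Grothendieck--Messing theory on the formal scheme $\RZ_{\mathbb{X}}$ with the fact that a quasi-isogeny of $p$-divisible groups induces an isomorphism of associated rational Dieudonn\'e crystals. The starting point is the observation that $\Vdr$ is naturally computed, via Grothendieck--Messing, as the evaluation of the contravariant Dieudonn\'e crystal $\mathbb{D}(\pdiv)$ of the universal $p$-divisible group on the canonical PD-thickening $\RZ_{\mathbb{X},0}\hookrightarrow \RZ_{\mathbb{X}}$ arising from the divided powers on $(p)\subset W(\Fpbar)$; here $\RZ_{\mathbb{X},0}$ denotes the special fiber. With this in mind, it suffices to produce a canonical trivialization at the level of (rational) crystals and then evaluate.

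The first step is to use the universal quasi-isogeny
\[
\rho:\pdiv_0\times_{\Fpbar}\RZ_{\mathbb{X},0}\longrightarrow \pdiv\times_{\RZ_{\mathbb{X}}}\RZ_{\mathbb{X},0}
\]
to produce an isomorphism at the level of rational Dieudonn\'e crystals on $\mathrm{Cris}(\RZ_{\mathbb{X},0}/W(\Fpbar))$:
\[
\mathbb{D}(\rho)[1/p]:\mathbb{D}(\pdiv)[1/p]\xrightarrow{\sim} \pi^*\mathbb{D}(\pdiv_0)[1/p],
\]
where $\pi:\RZ_{\mathbb{X},0}\to\Spec\Fpbar$ is the structure map. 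This is the familiar crystalline nature of quasi-isogenies: while $\rho$ itself need not lift to $\RZ_{\mathbb{X}}$, its induced morphism of Dieudonn\'e crystals does, and it becomes an isomorphism after inverting $p$ precisely because $\rho$ is a quasi-isogeny.

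The second step is to identify the right-hand side explicitly. Since $\pdiv_0$ lives over $\Spec\Fpbar$, the pullback $\pi^*\mathbb{D}(\pdiv_0)$ is the constant crystal with value the (contravariant) Dieudonn\'e module $\mathbb{D}_0$. Its evaluation on the PD-thickening $\RZ_{\mathbb{X},0}\hookrightarrow \RZ_{\mathbb{X}}$ is therefore $\mathbb{D}_0\otimes_{W(\Fpbar)}\mathcal{O}_{\RZ_{\mathbb{X}}}$ equipped with the trivial connection $d$, because constant crystals have trivial parallel transport. Combining this with the previous step and passing to the rigid generic fiber produces the desired canonical identification of $\Vdr|_{\RZ^{\rig}_{\mathbb{X}}}$ with $(\mathbb{D}_0\otimes \mathcal{O}_{\RZ^{\rig}_{\mathbb{X}}},d)$, which is manifestly functorial in the universal data and hence canonical.

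The main obstacle I anticipate is purely bookkeeping: one must verify that the evaluation of a rational crystal on a formal PD-thickening behaves well under rigid analytification, and that the isomorphism $\mathbb{D}(\rho)[1/p]$ is actually realized by an isomorphism of coherent sheaves with connection on $\RZ^{\rig}_{\mathbb{X}}$ (rather than merely after further localization). This follows from the formal smoothness of $\RZ_{\mathbb{X}}$ over $W(\Fpbar)$ and the compatibility of the crystalline--de Rham comparison for $p$-divisible groups with the rigid generic fiber, but making these compatibilities explicit is the most technical part of the argument.
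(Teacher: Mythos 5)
The paper records this as a known fact with no proof beyond the one-line remark preceding the lemma (``by the universal quasi-isogeny $\Vdr$ is canonically trivial, isomorphic to $(\mathbb{D}_0\otimes\mathcal{O}_{\RZ_{\mathbb{X}}^{\rig}},d)$''), and your proposal supplies exactly the standard argument implicit there: Grothendieck--Messing identification of $\Vdr$ with the crystal evaluated on the PD-thickening, the crystalline nature of the universal quasi-isogeny giving an isomorphism of rational Dieudonn\'e crystals after inverting $p$, and the observation that the pullback of $\mathbb{D}(\pdiv_0)$ is constant with trivial connection. This is correct and essentially coincides with the (unwritten) proof the paper has in mind.
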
 

The vector bundle with connection $\Vdr/\RZ_{\mathbb{X}}$ admits a one-step filtration $\Fil^1 \subset \mathcal{V}$. The canonical trivialization of $\Vdr|_{\RZ^{\rig}_{\mathbb{X}}}$ gives a canonical \emph{period map} 
\[
\per: \RZ^{\rig}_{\mathbb{X}} \rightarrow \cF
\]
to an appropriate flag variety, given by the variation of $\Fil^1$ with respect to the above trivialization of $\Vdr|_{\RZ^{\rig}_{\mathbb{X}}}$.

Now for a point $x\in \AgKpar(\overline{\F}_p)$, we let $\abvar_{x}$ and $\pdiv_{x}=\abvar[p^\infty]$ denote the polarized abelian variety and $p$-divisible group associated to $x$. By abuse of notation, we let $x$ denote the pair $(\pdiv_{x},\lambda_x)$.  
Rapoport-Zink also proved (\cite{RZ}) that $\RZ_x^{\mathrm{rig}}$ ``uniformizes'' a certain locus of $\AgKpar$. Indeed, there is a natural map 
\begin{equation}
    j: \RZ_x \rightarrow \widehat{\AgKpar}
\end{equation} 
 where $\widehat{\AgKpar}$ is the the formal scheme obtained by $p$-adically completing $\AgKpar$. The map $j$ can be described as follows. Let $\fX$ denote some locally noetherian $p$-adic formal $W(\Fpbar)$-scheme, and let $(\pdiv,\lambda,\rho) /\fX$ denote the data associated to some $\fX$-valued point of $\RZ_x$. Without loss of generality, by replacing $\rho$ by $p^n \rho$ for an appropriate integer $n$, we may assume that $\rho$ is an isogeny. Define $A'$ to be the abelian scheme defined to be $A_x \times \fX_{\Fpbar} / \ker \rho$. There is then a canonical isomorphism $i: A'[p^{\infty}] \rightarrow \pdiv \times \fX_{\Fpbar}$ that commutes with $\rho$ and the quotient map. The abelian scheme $A'$ is also equipped with a unique polarization $\lambda'$ that equals $\lambda$ on $\pdiv \bmod p$, and pulls back to $\lambda_{x}$ (upto a scalar) on $A_x$.  
The isomorphism $i$ allows us to view $\pdiv,\lambda$ as a deformation of $A'[p^{\infty}],\lambda'$. The Serre-Tate theorem yields a polarized abelian scheme $\tilde{\abvar'}/\fX$ (whose $p$-divisible group is canonically isomorphic to $\pdiv$) such that $\tilde{\abvar'}\times_{\fX} \fX_{\Fpbar}$ is canonically isomorphic to $A'$. The data of the (prime-to-$p$) level structure on $\abvar_{x}$ canonically defines a prime-to-$p$ level structure on $A'$, and therefore on $\tilde{\abvar'}$, whence we obtain an $\fX$-valued point of $\AgKpar$, as required.

The map $j|_{\RZ^{\rig}_x}$ (which we will also denote by $j$) is a ``topological covering" of its image in the following strong sense: First, the image $\mathfrak{T}$ consists of the tube inside $\AgKpar^\an$ over the locus in $\AgKpar \otimes \overline{\mathbb{F}}_p$ of points $p$-power isogenous to $\abvar_x$ (see \cite{RZ}, or \cite[Chapter II, Theorem 7.2.4]{Andre}), and therefore is a rigid analytic space itself. Secondly, the map $j: \RZ^{\rig}_x \rightarrow \mathfrak{T}$ has the property that, for any point $t \in \mathfrak{T}$, there exists an affinoid subdomain $U \subset \mathfrak{T}$ containing $t$ such that $j^{-1}(U)$ is the disjoint union of affinoid subdomains in $\RZ_x$, where on each of these affinoid subdomains the map $j$ restricts to an isomorphism onto $U$.

\subsection{Reduction to the case of \texorpdfstring{$\Dab \rightarrow \Ag$}{DabtoAg}}
In this section, we reduce the main theorem to the case of $\AgK$ with maximal parahoric level structure at $p$ and full level $\ell$ structure for some odd prime $\ell \neq p$. We will also prove that it suffices to show that every rigid-analytic map $f: \Dab \rightarrow \AgKpar^{\an}$ extends analytically to a map $f: \sD_{\epsilon}^{1+b} \rightarrow (\AgKpar^{\textrm{BB}})^{\an}$ for some $\epsilon > 0$, with $\sKpar$ as above. We will first need several preliminary results. 

\begin{theorem}\label{hyperlocalextensionforX}
Let $X$ denote a smooth rigid analytic variety over the non-archimedean field $F$. Let $f: \Dstar \times X \rightarrow (\PP^n)^{\an}$ denote a rigid-analytic map over $F$. Suppose that there exists a rational open neighbourhood $U_z \subset \sD \times X$ and an analytic extension of $f$ to $U_z$ for every classical point $z\in (\sD \times X) \setminus (\Dstar \times X).$

Then, $f$ extends analytically to a map $ \sD \times X \rightarrow (\PP^n)^\an$. 
\end{theorem}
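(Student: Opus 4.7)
The plan is to extend $f$ by forming the analytic closure $Z$ of its graph inside $(\sD \times X) \times_F (\PP^n)^{\an}$ and showing that the first projection $\pi\colon Z \to \sD \times X$ is an isomorphism. Equipping $Z$ with its reduced structure, $\pi$ is proper as a base change of $(\PP^n)^{\an} \to \Sp F$, and it restricts to an isomorphism onto the graph $\Gamma_f$ over $\Dstar \times X$. Once $\pi$ is shown to be an isomorphism globally, the composition $\pi_2 \circ \pi^{-1}\colon \sD \times X \to (\PP^n)^{\an}$ yields the desired extension of $f$.

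First I verify that $\pi$ is an isomorphism in a neighborhood of each classical boundary point. Fix a classical $z \in \{0\} \times X$ with rational open neighborhood $U_z$ and local extension $f_z\colon U_z \to (\PP^n)^{\an}$. The graph $\Gamma_{f_z}$ is closed in $U_z \times_F (\PP^n)^{\an}$, and since $U_z \cap (\Dstar \times X)$ is Zariski dense in $U_z$ and $f_z$ restricts to $f$ on it, the intersection $Z \cap (U_z \times_F (\PP^n)^{\an})$ agrees with $\Gamma_{f_z}$ as reduced closed subspaces. Hence $\pi$ is an isomorphism over each such $U_z$; in particular, $\pi$ is \'etale at every point of $Z$ lying over a classical point of $\{0\} \times X$.

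The main step is to propagate \'etaleness over classical boundary points to \'etaleness over all of $\sD \times X$. Let $R \subset Z$ be the non-\'etale locus of $\pi$, a closed analytic subset of $Z$. Since $\pi$ is proper, the image $B := \pi(R) \subset \sD \times X$ is closed analytic. By the preceding paragraph, $B$ lies in $\{0\} \times X$ and avoids every classical point thereof. Identifying $\{0\} \times X \cong X$ and invoking the rigid analytic Nullstellensatz---every non-empty reduced closed analytic subvariety contains a classical point---we conclude $B = \emptyset$, hence $R = \emptyset$. Thus $\pi$ is proper and \'etale, hence finite \'etale; working on each connected component and using that the degree is $1$ over the dense open $\Dstar \times X$, we deduce that $\pi$ is an isomorphism.

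The principal expected obstacle is the rigid-analytic assertion that $B = \pi(R)$ is closed analytic in $\sD \times X$; this combines the closedness of the non-\'etale locus inside $Z$ (via the coherent sheaf $\Omega^1_{Z/\sD \times X}$ and the non-flat locus) with the rigid analytic proper mapping theorem. Once this is granted, the density of classical points and the Nullstellensatz close the argument cleanly.
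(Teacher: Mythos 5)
Your argument has a genuine gap in its first step: the assertion that the ``analytic closure'' $Z$ of the graph $\Gamma_f$ inside $(\sD \times X) \times_F (\PP^n)^\an$ is a closed rigid-analytic subspace is not established, and it is in fact essentially equivalent to the theorem itself. The rigid-analytic Remmert--Stein extension theorem does not apply here: it requires the locally closed analytic subset being extended to have pure dimension strictly greater than the dimension of the boundary along which one extends, whereas $\dim \Gamma_f = 1 + \dim X$ while $\dim\left(\{0\} \times X \times (\PP^n)^\an\right) = \dim X + n$, so the dimension inequality fails as soon as $n \geq 1$. Your local extensions over the neighborhoods $U_z$ do show that $Z$ is analytic at every point of $Z$ lying over a \emph{classical} point of $\{0\} \times X$; but --- as the paper's own remark following Theorem 1.2 emphasizes --- the $U_z$ together with $\Dstar \times X$ need not form an admissible cover of $\sD \times X$, so this local information does not automatically extend to points of $Z$ lying over non-classical (Type 2/3/4/5) points of the boundary divisor. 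Without analyticity of $Z$, the subsequent steps (properness of $\pi$, the non-\'etale locus $R$, the proper mapping theorem giving $B = \pi(R)$ analytic, the Nullstellensatz) have nothing to act on.

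The paper circumvents exactly this difficulty by reducing to codimension-one objects: the pullback divisor $f^{-1}(H)$ extends across the puncture by Lütkebohmert's Theorem 3.15(b) (a codimension-one Remmert--Stein statement, which does apply), giving a line bundle $\cL$ on all of $\sD \times X$; the sections $f^*(a_j)$ of $\cL$ are then extended by an explicit Laurent-expansion-and-identity-principle argument (\autoref{extfunction}), iterated over an affinoid cover of $X$; finally, the extended sections are shown to be basepoint-free. This careful bootstrapping through functions and sections is what replaces the (unavailable) higher-codimension extension of the graph. Granting analyticity of $Z$, the rest of your argument --- properness of $\pi$, the non-\'etale locus, the image under the proper mapping theorem, and the Nullstellensatz to rule out a boundary bad locus --- is clean and correct, so the gap is localized but it sits precisely on the crux of the theorem.
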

\begin{proof}
First, by considering an open cover of $X$ by connected rational affinoid subdomains, we may assume that $X$ is smooth, connected and affinoid which we shall henceforth assume.

Let $x\in X$ be some classical point, and let $z = (0,x)\in  \sD \times X$. We let $f_z: (\Dstar \times X) \cup U_z \rightarrow (\PP^n)^\an$ denote the extension of $f$. We consider $f_z^{-1}(H) \subset (\Dstar \times X) \cup U_z$, where $H\subset \PP^n$ is a hyperplane. By \cite[Theorem 3.15 (b)]{lutkebohmert}, this extends to a closed analytic divisor $\Delta \subset \sD \times X$\footnote{Luktebohmert works in Tate's category of rigid-analytic varieties, but that is equivalent to our category of adic spaces.}. We let $\cL \in \Pic(\sD \times X)$ denote the line bundle associated to this divisor. Let $(a_0,...,a_n)$ denote a basis of sections of $\cO(1)$, and let $s_j = f_z^{*}(a_j)$. Note that $s_j$ is a section in $H^0(U_z \cup ( \Dstar\times X),\cL)$. We will now show that the $s_j$ extend to sections of $\cL$ on all of $ \sD \times X$, and that we may use them (together with hypothesis of local extensions) to extend $f$ analytically to a map $ \sD \times X \rightarrow \PP^n$. There exists a finite affinoid cover $X = \cup_{1 \leq i \leq m} X_i$ of $X$ with $U_z \subset X_1$ such that $X_i\cap X_{i+1}\neq \emptyset$ and such that $\cL|_{\sD_{\epsilon} \times X_i}$ is trivializable. Without loss of generality, we assume that $\epsilon =1$. 

\subsubsection*{Step 1: Extending the sections.}
It suffices to prove that the sections $s_j$ extend to sections of $\cL$ on $ \sD \times X_i$ for every $i$. We proceed to prove this inductively on $i$, starting with $i = 1$.
By construction, the sections $s_j$ already extend to $U_z$. We pick an isomorphism $\cL|_{ \sD \times X_1} \xrightarrow{\sim} \cO$. Under this isomorphism, the $s_j$ correspond to functions $g_j$ on $( \Dstar \times X_1) \cup U_z$.  In order to prove that the $s_j$ extend to $X_1\times \sD$, it suffices to prove that the functions $g_j$ do. But that just follows from \autoref{extfunction} below. 

We now suppose that we have extended the $s_j$ to $ \sD \times X_i$, and we will use this to extend them to $\sD \times X_{i+1}$. We have that the $s_j$ extend to $ (\sD \times (X_i \cap X_{i+1})) \cup ( \Dstar \times X_{i+1}) \subset ( \sD \times X_{i+1})$. We pick an isomorphism $\cL|_{ \sD \times X_{i+1}} \xrightarrow{\sim} \cO$. Under this isomorphism, the $s_j$ correspond to analytic functions $g_j$ on $( \sD \times (X_i \cap X_{i+1})) \cup ( \Dstar \times X_{i+1})$, and it suffices to show that the $g_j$ extend to functions on $ \sD \times X_{i+1}$. But this follows from \autoref{extfunction} below. Therefore, we have that the sections $s_j$ extend to $\sD \times X$, as required. 

\subsubsection*{Step 2: Extending the map.}
It suffices to extend the map $f\vert_{ \Dstar \times X_i} :  \Dstar \times X_i \rightarrow \PP^n$ to an analytic map $\sD \times X_i\rightarrow \PP^n$ for each $i$, and so we suppose that $X = X_1.$ By Step 1, we have functions $(g_j)_{0\leq j \leq n} \in \cO( \sD\times X )$ such that $f(z) = [g_0(z):g_1(z):\hdots g_n(z)]$ for all $z\in  \Dstar \times X$. We may assume that $t\nmid g_{j_0}$ for some $j_0$, where $t$ is the coordinate on $\sD$. We will prove that the $g_j$ have no common zero under this assumption. It suffices to check this at the level of classical points. To that end, let $z = (0,x) \in \{0\} \times X$ for some classical point $x\in X$. As $f$ extends to a map on a neighbourhood of $z$, there exists an affinoid neighbourhood $x\in U_x \subset X$ (and we may further assume that $U_x$ is isomorphic to $\sD^b$) such that $f$ extends to $ \sD \times U_x$. As $\cO( \sD \times U_x)$ is a UFD, there exists a tuple of function $(h_0,\hdots h_n)$ that define the extension of $f$ to $ \sD \times U_x$. By possibly shrinking $U_x$ further, we may assume that some $h_j$ is invertible on $ \sD \times U_x$, say $h_0$. Let $h = \frac{g_0}{h_0} \in \cO( \sD\times U_x)$. As the maps given by the $(g_j)$ and the $(h_j)$ agree on $ \Dstar \times U_x$, we have that $g_j = hh_j$ for every $j$. Therefore, the locus of common zeros in $\sD\times U_x$ of the $g_i$ equals the vanishing locus of $h$. We also have that the vanishing locus of $h$ is contained in the vanishing locus of $t$ (as the $g_j$ do not have a common zero on $ \Dstar \times X$). Therefore, we have that $h \mid t^m$ for some integer $m$. As $t$ is irreducible and therefore prime in $\cO(\sD\times U_x)$, we have that $h = u \cdot t^{m'}$, where $u$ is non-vanishing and $m'\leq m$. If $m'>0$, then we have that $t$ divides $ g_{j_0}|_{ \sD\times U_x}$ and therefore $ t$ divides $g_{j_0}$. This is a contradiction! Therefore, $h$ is non-vanishing and hence $g_0(z) \neq 0$. As $z$ was an arbitrary classical point, we have proved that the functions $g_j$ have no common zero, and therefore $f$ extends to a map $ \sD \times X \rightarrow (\PP^n)^\an$ as required.

\end{proof}

\begin{lemma}\label{extfunction}
    let $X$ denote a smooth connected affinoid variety, and let $U\subset X$ denote a rational open affinoid subdomain. Then, $\cO( \Dstar \times X) \cap \cO(\sD_{\epsilon} \times U) = \cO( \sD \times X)$. 
\end{lemma}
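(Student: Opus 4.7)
The plan is to take any $g\in \cO(\Dstar\times X)\cap \cO(\sD_\epsilon\times U)$, expand it as a Laurent series in the coordinate $t$ on $\sD$, show its principal part vanishes, and then read off the required extension.

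First, I would write down the Laurent expansion. From the nested covering $\Dstar\times X = \bigcup_{m\ge 1}\Ann_m\times X$ together with the fact that $X=\Sp A$ is affinoid, every function on $\Dstar\times X$ admits a unique expansion $g=\sum_{n\in\Z} a_n\,t^n$ with coefficients $a_n\in \cO(X)$ satisfying $\|a_n\|_X\to 0$ as $n\to +\infty$ and $\|a_n\|_X\, r^{-n}\to 0$ for every $r>0$ as $n\to -\infty$ (the standard description of functions on an annulus times an affinoid).

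Second, I would use the hypothesized extension to $\sD_\epsilon\times U$ to kill the principal part on $U$. On the overlap $(\sD_\epsilon\cap\Dstar)\times U$ there are two competing representations of $g|_U$: the Laurent expansion $\sum_n (a_n|_U)\,t^n$ inherited from $\Dstar\times X$, and a pure Taylor series $\sum_{n\ge 0} b_n\,t^n$ with $b_n\in\cO(U)$ coming from the extension to $\sD_\epsilon\times U$. Uniqueness of the Laurent expansion in the Tate algebra $\cO(U)\langle t/\epsilon,\,\epsilon/t\rangle$ forces $a_n|_U=0$ for every $n<0$.

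Third, I would propagate this vanishing from $U$ to all of $X$. Smoothness of $X$ implies $X$ is locally integral; combined with connectedness, $X$ is irreducible and $\cO(X)$ is an integral domain. A non-empty rational affinoid subdomain $U\subset X$ then has the same dimension as $X$ and is Zariski-dense in $X$, so the restriction $\cO(X)\to \cO(U)$ is injective. We conclude $a_n=0$ in $\cO(X)$ for all $n<0$. The truncated series $g=\sum_{n\ge 0}a_n\,t^n$ has coefficients $a_n\in\cO(X)$ with $\|a_n\|_X\to 0$, and hence converges to an analytic function on $\sD\times X$ that agrees with $g$ on $\Dstar\times X$.

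The only substantive input is Step 3, namely the injectivity of $\cO(X)\hookrightarrow\cO(U)$ for a non-empty rational subdomain of a smooth connected affinoid; everything else is bookkeeping with Laurent expansions, and I do not expect any further obstacle.
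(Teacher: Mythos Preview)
Your proposal is correct and follows essentially the same approach as the paper: expand in a Laurent series in $t$ with coefficients in $\cO(X)$, use the extension over $\sD_\epsilon\times U$ to force the negative-index coefficients to vanish on $U$, and then invoke the identity principle (which you spell out as injectivity of $\cO(X)\to\cO(U)$ for smooth connected $X$) to conclude they vanish on $X$. The only difference is that the paper is terser, simply citing ``the identity principle'' where you give the underlying justification.
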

\begin{proof}
    We choose a coordinate $t$ on $\sD$ centered at $0$. Let $g$ be a function in the intersection. As $g\in \cO( \Dstar \times X)$, we may write $g = \sum_{m\in \Z} b_m t^m$, where $b_m\in \cO(X)$ satisfy appropriate convergence conditions. As $g$ extends to a function on $ \sD_{\epsilon}\times U$, we have that $b_m|_{U} \equiv 0$ for $m<0$. By the identity principle, we have that $b_m \equiv 0$ on $X$. The lemma follows.
\end{proof}

\begin{corollary}\label{hyperlocalimpliesgeneraltopologicalcover}
    Let $U$ be a smooth rigid analytic variety over $F$ and $Y \subseteq U$ a closed smooth divisor. Let $Z$ be rigid analytic variety over $F$ and let $W \subseteq (\mathbb{P}^n_F)^\an$ be a locally closed reduced analytic subspace of projective $n$-space. Let $j : Z \rightarrow W$ be a topological covering of rigid analytic spaces. Let $f : U\setminus Y \rightarrow Z$ be an analytic map. 
    Suppose that there exists a rational open neighbourhood $U_z \subset U$ and an analytic extension of $f$ to $U_z$ for every classical point $z\in U\setminus Y.$
    Then $f$ extends analytically to a map $U \rightarrow Z.$
\end{corollary}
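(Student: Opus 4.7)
The plan is to first extend the composition $j \circ f : U \setminus Y \to (\mathbb{P}^n_F)^{\mathrm{an}}$ to an analytic map on all of $U$ by invoking \autoref{hyperlocalextensionforX} on local coordinate charts, then to argue that this extension factors through the locally closed subspace $W$, and finally to lift back across the topological cover $j$ to produce $\tilde f : U \to Z$. Since $Y$ is a smooth closed divisor in the smooth rigid analytic variety $U$, every classical point of $Y$ admits an open neighbourhood in $U$ of the form $\sD \times \sD^b$ (with $b+1 = \dim U$) in which $Y$ corresponds to $\{0\} \times \sD^b$. I would cover $U$ by such charts $\{V_\alpha\}$ together with $U \setminus Y$. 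At every classical point $z \in V_\alpha \cap Y$, the hypothesis provides a rational open $U_z \subseteq U$ and an analytic extension of $f|_{U_z \setminus Y}$ to $U_z$; composing with $j$ and intersecting with $V_\alpha$, one obtains the rational open extensions of $j \circ f$ needed to apply \autoref{hyperlocalextensionforX} to $(j \circ f)|_{V_\alpha \setminus Y}$, producing an analytic extension $g_\alpha : V_\alpha \to (\mathbb{P}^n)^{\mathrm{an}}$. By uniqueness of analytic continuation on the dense overlaps with $U \setminus Y$, the $g_\alpha$ glue with $j \circ f|_{U \setminus Y}$ into a global analytic map $g : U \to (\mathbb{P}^n)^{\mathrm{an}}$.

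To see $g(U) \subseteq W$, set $E := g^{-1}(\overline{W} \setminus W)$: since $g(U \setminus Y) \subseteq W$, this is a closed analytic subset of $U$ contained in $Y$. At every classical point $y \in Y$, the local extension $f_y : U_y \to Z$ supplied by the hypothesis satisfies $j \circ f_y = g|_{U_y}$ by uniqueness on the dense open $U_y \setminus Y$, so $g(y) = j(f_y(y)) \in W$. Hence $E$ has no classical points, and being a closed analytic subset of the reduced rigid analytic variety $U$, it must be empty.

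Finally, to lift $g : U \to W$ to $\tilde f : U \to Z$ extending $f$, we pull back $j$ along $g$ to obtain a topological cover $p : U \times_W Z \to U$; the map $f$ then defines a section of $p$ over the dense open $U \setminus Y$. Shrinking each $V_\alpha$ so that $p$ trivializes over it, the connected space $V_\alpha \setminus Y \cong \sD^\times \times \sD^b$ forces $f$ to pick out a single sheet on $V_\alpha \setminus Y$, which extends uniquely to a section of $p$ over the connected polydisc $V_\alpha$. These local sections agree on the dense overlaps with $U \setminus Y$, so they glue by uniqueness of lifts through an étale map (the locus where two lifts agree is open-and-closed), yielding $\tilde f$. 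The main technical hurdle is handling non-classical points of $U$, where the hypothesis gives no direct information; this is precisely overcome by \autoref{hyperlocalextensionforX}, which produces extensions on \emph{all} of each $V_\alpha$, while the smoothness and codimension-one property of $Y$ supply the denseness needed to glue the local lifts uniquely.
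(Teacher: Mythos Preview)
Your proposal is correct and follows essentially the same strategy as the paper: reduce to local charts of the form $\sD \times \sD^b$ via Kiehl's tubular neighbourhood theorem, apply \autoref{hyperlocalextensionforX} to extend $j\circ f$ into $(\PP^n)^{\an}$, and then lift through the topological cover $j$ using connectedness of $U_{sr}\setminus Y$ to pin down a unique sheet. The only organizational difference is that the paper performs the lifting reduction first (reducing to the case $Z=W$) and is terse about why the extension lands in $W$, whereas you do the lifting last and spell out the argument that $g(U)\subseteq W$ via the closed set $E=g^{-1}(\overline W\setminus W)$ having no classical points; one small point to make explicit there is that $g$ factors through $\overline W$ (since $g^{-1}(\overline W)$ is closed and contains the dense open $U\setminus Y$), so that $E$ is indeed closed in $U$.
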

\begin{proof}
    It suffices to prove that $(j \circ f) : U\setminus Y \rightarrow W$ extends analytically to a map $g : U \rightarrow W$. To see this, suppose that we had such an extension $g$. Pick an open cover of $W = \cup_s W_s$ by connected affinoid subdomains such that $j^{-1}(W_s) = \coprod_t Z_{st}$ is a disjoint union of connected affinoid subdomains $Z_{st}$ such that $j\vert_{Z_{st}} : Z_{st} \rightarrow W_s$ is an isomorphism. Cover $U_s := g^{-1}(W_s)$ by connected affinoid subdomains $U_s = \cup_r U_{sr}.$ Note that $U_{sr} \setminus Y$ is also then connected, and hence for each $r$ there is a unique $t$ such that $f(U_{sr}\setminus Y) \subseteq Z_{st}$. The map $((j\vert_{Z_{st}})^{-1} \circ g) : U_{sr} \rightarrow Z_{st}$ is then the unique extension of $f$ to $U_{sr}$. These extensions being unique agree on pairwise intersections, and hence necessarily glue to a map $f : U \rightarrow Z.$ 

    By the above paragraph, we may assume that $Z$ is a locally closed analytic subvariety of $(\mathbb{P}^n_F)^\an.$ We may also assume that $U$ is affinoid. Applying Kiehl's tubular neighbourhood theorem \cite[Theorem 1.18]{kiehl-derham} and if need be replacing $U$ by a further affinoid cover, we reduce to the case that $U = \sD \times X$ and $Y = \{0\} \times X$. The corollary then follows from \autoref{hyperlocalextensionforX}.
\end{proof}

We are now ready to treat the case of Shimura varieties of abelian type and $\Ag$. Let $(G,X)$ denote a Shimura datum. Recall that it is of Hodge type if there exists an embedding of $(G,X)$ into the Shimura datum associated to $\GSp(V,\psi)$ as above for some positive integer $g$. A Shimura datum $(G_1,X_1)$ is of abelian type if there exists a Shimura datum $(G,X)$ of Hodge type and a central isogeny $G^{\der}\rightarrow G_1^{\der}$ which induces an isomorphism $(G^{\ad},X^{\ad}) \rightarrow (G_1^{\ad},X_1^{\ad})$ of adjoint data. 

\begin{proposition}\label{prop:reducetoAgpar}
 In order to prove \autoref{main}, it suffices to prove the following result. Let $\AgKpar$ have full level-$\ell$ structure at an odd prime $\ell \neq p$ and let $\Dab \rightarrow \AgKpar^\an$ denote a rigid-analytic map. Then, there exists an $\epsilon >0$ such that $f$ extends analytically to a map $\sD_{\epsilon}^{1+b} \rightarrow ({\AgKpar^{\textrm{BB}}})^{\an}$.  
\end{proposition}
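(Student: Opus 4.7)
My plan is to derive \autoref{main} from the stated sufficient condition via four reductions, exploiting projectivity of the Baily-Borel compactification throughout. First, I would use \autoref{hyperlocalimpliesgeneraltopologicalcover} (with $U = \sD \times X$, $Y = \{0\}\times X$, $Z = W = S(G,\mathcal{H})_\sK^{\mathrm{BB}}$ embedded in some $\mathbb{P}^n$ via projectivity, and $j = \mathrm{id}$) to reduce the extension problem to a neighbourhood of each classical boundary point $z = (0,x)$. By Kiehl's tubular neighbourhood theorem such neighbourhoods can be taken of the form $\sD_\epsilon^{1+b}$, matching the shape in the hypothesis. This reduces us to the task of extending a map $\Dab \to S^{\an}$ to $\sD_\epsilon^{1+b} \to (S^{\mathrm{BB}})^{\an}$ for a general abelian-type $S$.

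Next, I would pass from abelian type through Hodge type into $\mathcal{A}_g$ and then refine the level. For an abelian-type datum $(G,\mathcal{H})$ there is a Hodge-type $(G^*,\mathcal{H}^*)$ with the same adjoint datum; after refining levels suitably (while preserving torsion-freeness), appropriate connected components of $S(G,\mathcal{H})_\sK$ and $S(G^*,\mathcal{H}^*)_{\sK^*}$ admit a common finite étale cover, giving a zig-zag of finite étale morphisms between them. A Hodge embedding $(G^*,\mathcal{H}^*) \hookrightarrow (\GSp(V,\psi), \mathcal{H}_g)$ then produces a closed immersion $S(G^*,\mathcal{H}^*)_{\sK^*} \hookrightarrow \mathcal{A}_{g,\sK_g}$ for suitable $\sK_g$, which by the work of Pink and Madapusi-Pera extends to a closed immersion of Baily-Borel compactifications. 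Finally, shrinking $\sK_g$ to $\sKpar$ which is maximal parahoric at $p$ with full level $\ell$ at some odd $\ell \neq p$ yields a finite étale cover $\mathcal{A}_{g,\sKpar} \to \mathcal{A}_{g,\sK_g}$.

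At each finite-étale-cover step I would lift the map $f$ along a corresponding finite étale cover of the source $\Dab$; since $\pi_1(\Dstar) \cong \widehat{\Z}(1)$ is procyclic and tame, any such cover (ramified only along the puncture $\{0\}$) remains of the form $\Dab$ after passing to a $p$-prime root of the puncture coordinate, so the lifted map has exactly the shape of the proposition's hypothesis. Applying the hypothesis to the innermost lift and descending stepwise then produces the desired extension: uniqueness of analytic extension and the fact that the relevant Galois actions extend to the BB compactifications allow descent along each finite étale cover; and for the closed-immersion step, continuity forces any extension of $f$ into $\mathcal{A}_{g,\sK_g}^{\mathrm{BB}}$ to factor through the closed BB-subvariety $S(G^*,\mathcal{H}^*)_{\sK^*}^{\mathrm{BB}}$.

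\textbf{Main obstacle.} The substantive content of the reduction lies in the Pink/Madapusi-Pera compatibility of the Hodge embedding with Baily-Borel compactifications, together with the careful bookkeeping needed to refine level structures while preserving torsion-freeness and introducing the $\ell \neq p$ condition without losing extension information. By contrast, the source-lifting step is routine once the procyclic tame structure of $\pi_1(\Dstar)$ is in hand, and the initial local-to-global reduction is a direct application of \autoref{hyperlocalimpliesgeneraltopologicalcover} and Kiehl's tubular neighbourhood theorem.
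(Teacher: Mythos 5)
Your overall strategy matches the paper's: reduce to local extension near each boundary point via Kiehl's theorem and \autoref{hyperlocalextensionforX} (or equivalently \autoref{hyperlocalimpliesgeneraltopologicalcover} with $j = \mathrm{id}$), then trace a chain of finite \'etale covers and closed immersions from abelian type through Hodge type into $\mathcal{A}_{g,\sK}$ and finally to $\AgKpar$. The Hodge-embedding step via Pink/Madapusi-Pera compatibility with Baily--Borel compactifications and continuity is a reasonable elaboration of what the paper compresses into a citation to Deligne. However, the step you dismiss as routine --- lifting the map along finite \'etale covers of the source --- is precisely where your argument has a genuine gap, and it is the technical heart of what the paper isolates as \autoref{lem:finiteetale}.

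You assert that $\pi_1(\Dstar) \cong \widehat{\Z}(1)$ is ``procyclic and tame,'' and that the pullback covers are therefore Kummer of degree prime to $p$. Both claims fail. First, the relevant level refinement $\AgK \rightarrow \AgKpar$ changes level \emph{at} $p$ (from $\sK_p$ up to the maximal parahoric), so its pullback to $\Dab$ is classified by the $\Z_p$-local system of Tate modules and is generically a $p$-power cover --- not a ``$p$-prime root'' of the coordinate. Second, the geometric \'etale fundamental group of the rigid-analytic $\Dstar$ is not procyclic: only its maximal prime-to-$p$ quotient is, by L\"utkebohmert's results, and the $p$-part has extensive wild ramification phenomena. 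Consequently, one cannot assume a priori that the pullback cover of $\Dab$ is again of the form $\Dab$ via a Kummer substitution. The paper handles this by invoking \cite[Prop.\ 4.2.1 and Lemma 4.2.2]{DLLZ2}, which shows that \emph{after shrinking} the neighbourhood $U_x$ and \emph{after a finite extension of the base field}, the pullback cover becomes a standard Kummer-\'etale cover $\Dab \to \Dab$ (possibly of $p$-power degree). This shrinking-and-base-change step is not optional. Moreover, the descent after applying the hypothesis is handled not by ``uniqueness of analytic extension'' alone but by observing that the original map is then rigid-subanalytic, so the subanalytic Riemann extension theorem of \cite{p-adic-definable-chow} produces the extension downstairs. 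Your sketch would need to replace the procyclicity/tameness claim with these two ingredients (DLLZ2's Kummer structure after shrinking, and subanalytic extension for descent) to close the gap.
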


We will need the following lemma. 
\begin{lemma}\label{lem:finiteetale}
    Let $S,T$ denote algebraic varieties over a non-archimedean field $F$, and let $\bar{S},\bar{T}$ denote projective compactifications of $S$ and $T$. Let $\pi : S \rightarrow T$ be a finite \'etale morphism of varieties over the non-archimedean field $F$ that extends to a finite map $\bar{S} \rightarrow \bar{T}$. Then, every analytic map $\Dstar \times X \rightarrow S$ defined over $F$ extends to map $\sD \times X \rightarrow \bar{S}$ if and only if every analytic map $\Dstar\times X \rightarrow T$ extends to a map $\sD \times X \rightarrow \bar{T}$.
\end{lemma}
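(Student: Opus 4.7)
My plan for the first direction (extension for $T$ implies extension for $S$) is to push any map $f: \Dstar \times X \to S$ down to $g := \pi \circ f$, invoke the hypothesis to extend $g$ to $\bar g : \sD \times X \to \bar T$, and then pull $\bar S \to \bar T$ back along $\bar g$. This produces a finite analytic space
\[
\bar Y := (\sD \times X) \times_{\bar T} \bar S \longrightarrow \sD \times X,
\]
inside which $f$ provides a section $(\mathrm{id}, f)$ of the finite étale cover $\bar Y|_{\Dstar \times X} = (\Dstar \times X) \times_T S$ over the dense open $\Dstar \times X$, cutting out a union of connected components there. I would then take the analytic closure $Z$ of that section in $\bar Y$, so $Z \to \sD \times X$ is finite (as a closed subspace of a finite morphism) and an isomorphism over $\Dstar \times X$. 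Reducing to $X$ connected makes $Z$ irreducible, and since $\sD \times X$ is smooth (hence normal), the finite birational map $Z \to \sD \times X$ must be an isomorphism; composing $\sD \times X \xrightarrow{\sim} Z \hookrightarrow \bar Y \to \bar S$ yields the desired extension of $f$.

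For the reverse direction, I would first reduce—using the direction just proved, applied to the map $\tilde S \to S$, where $\tilde S$ is the Galois closure of $\pi$ compactified by taking the normalization of $\bar S$ in $\tilde S$—to the case where $\pi: S \to T$ is Galois with finite group $G$. Since $\bar T$ is projective I fix an embedding $\bar T \hookrightarrow \P^n$, and by \autoref{hyperlocalextensionforX} it suffices to produce a local extension of $g: \Dstar \times X \to T$ on a rational neighborhood of every classical boundary point $(0, x_0) \in \sD \times X \setminus \Dstar \times X$. Shrinking to a small affinoid polydisc neighborhood $U$ of $x_0$, the pullback $W := (\Dstar \times U) \times_T S$ is a finite étale $G$-torsor on $\Dstar \times U$. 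I would then argue, via a rigid-analytic Abhyankar-type analysis (the normalization of $W$ inside $\sD \times U$ is a finite cover ramified tamely only along $\{0\} \times U$), that for some integer $n$ divisible by $|G|$ the pullback of $W$ along the Kummer map $\sigma_n: \Dstar \times U \to \Dstar \times U$, $(z,u) \mapsto (z^n, u)$, trivializes as a $G$-torsor. A trivialization yields a lift $\tilde g: \Dstar \times U \to S$ of $g \circ \sigma_n$; the extension hypothesis for $S$ then gives $\overline{\tilde g}: \sD \times U \to \bar S$, and composing with $\bar\pi$ produces an extension of $g \circ \sigma_n$ to $\sD \times U \to \bar T$. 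By construction this extension is $\mu_n$-invariant on the dense open $\Dstar \times U$, hence by the identity principle $\mu_n$-invariant on all of $\sD \times U$, and therefore descends along $\sigma_n$ to the sought extension $\sD \times U \to \bar T$ of $g|_{\Dstar \times U}$.

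The hard part will be justifying the uniform Kummer trivialization of $W$: one must verify both that the finite étale cover $W \to \Dstar \times U$ (arising by pullback from an algebraic cover of $T$) extends to a \emph{tamely} ramified finite cover of $\sD \times U$ along the divisor $\{0\} \times U$, and that a single ramification index $n$ suffices uniformly across the connected polydisc $U$. I expect this to follow from the rigid-analytic analogue of Abhyankar's lemma, or equivalently from a direct analysis of the normalization of $W$ exploiting the smoothness (and, after shrinking, the simple connectivity) of $U$; this is the only step where the rigid-analytic finiteness and tameness of étale covers on $\Dstar \times U$ really enters.
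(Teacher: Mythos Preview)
Your outline is sound and both directions go through, but neither matches the paper's argument, so a brief comparison is in order.

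For the forward direction (extension for $T$ implies extension for $S$), the paper does not use your fiber-product/closure/Zariski-Main-Theorem argument at all. Instead it observes that once $\bar g:\sD\times X\to \bar T^{\an}$ exists, every boundary point has an affinoid neighbourhood $U_z$ with $\bar g(U_z)$ contained in an affinoid of $\bar T^{\an}$; finiteness of $\bar S\to\bar T$ then forces $f(U_z\cap(\Dstar\times X))$ to be bounded in $\bar S^{\an}$, and the Riemann extension theorem plus \autoref{hyperlocalextensionforX} finishes. Your approach is conceptually cleaner but hides a nontrivial step: showing that the closure $Z\to \sD\times X$ is an isomorphism amounts, affinoid-locally, to the statement that $\cO(\sD\times X)$ is integrally closed in $\cO(\Dstar\times X)$. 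This is true (it is essentially \autoref{lem:integrally-closed} later in the paper, or can be seen directly since the complement is the Cartier divisor $t=0$ and affinoid algebras are noetherian), but you should flag it rather than treat ``finite birational over normal $\Rightarrow$ iso'' as automatic in the rigid category.

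For the reverse direction both arguments pass through a Kummer/Abhyankar step, and you correctly isolate this as the crux; the paper simply cites \cite[Proposition~4.2.1 and Lemma~4.2.2]{DLLZ2} for it (and allows a finite extension of the base field, which you should also allow). Your extra reduction to the Galois case is harmless but unnecessary. The genuine difference is in the endgame: after extending the lifted map $\tilde g$ to $\sD\times U\to\bar S$, the paper composes with $\bar\pi$, observes that this makes the original $f$ rigid-subanalytic, and invokes the rigid-subanalytic Riemann extension theorem of \cite{p-adic-definable-chow}. Your $\mu_n$-invariance descent along $\sigma_n:\sD\times U\to\sD\times U$ is a valid and arguably more elementary alternative, avoiding the subanalytic machinery entirely; since $\sD\times U$ is literally the quotient of $\sD\times U$ by $\mu_n$ via $\sigma_n$, a $\mu_n$-invariant map to the separated target $\bar T^{\an}$ does descend.
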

\begin{proof}
    We first observe that the lemma immediately reduces to replacing $\Dstar \times X \subset \sD \times X$ by $\Dab \subset \sD^{1+b}$. Indeed, \autoref{hyperlocalextensionforX} implies that proving the extension theorem for $\Dstar\times X \subset \sD \times X$ for any smooth $X$ (for both $(S,\bar{S})$ and $(T,\bar{T})$) reduces to proving it for $\Dab \subset \sD^{1+b}$. This is because every classical point $x\in X$ admits a neighbourhood $x\in U_x\in X$ with $U_x$ isomorphic to $ \sD^{\dim X}$. 
    
    Suppose that $(T,\bar{T})$ satisfies the hypothesis of the lemma. 
    Let $f : \Dab \rightarrow S^\an$ be a rigid-analytic map. Then the assumption on $(T,\bar{T})$ implies that $\pi \circ f : \Dab \rightarrow T^\an$ extends to a rigid analytic map $F : \sD^{1+b} \rightarrow \bar{T}^\an.$ This implies that for every $x\in \sD^{1+b}\setminus (\Dab)$, there is a rational open neighbourhood $x\in U_x \subset \sD^{a+b}$ such that $F(U_x) \subset \bar{T}^\an$ is bounded (i.e. contained in an affinoid subdomain of $\bar{T}^\an$). Therefore, it follows that $f(U_x \cap \Dab) \subset \bar{S}^\an$ is also bounded, and therefore (by the Riemann extension theorem) extends to a rigid-analytic map $U_x \rightarrow \bar{S}^\an$. We conclude by applying \autoref{hyperlocalextensionforX}.

    Suppose now that $(S,\bar{S})$ satisfies the hypothesis of the lemma, and that $f : \Dab \rightarrow T^\an$ is an analytic map. Let $x \in \sD^{1+b}\setminus (\Dab)$ be a classical point. By \autoref{hyperlocalextensionforX}, it suffices to find a neighbourhood $x\in U_x \subset \sD^{1+b}$ and a map $U_x \rightarrow \bar{T}$ that extends $f|_{U_x \cap (\Dab)}$. We pick $U_x$ such that the pair $(U_x\cap \Dab) \subset U_x$ is isomorphic to the pair $\sD^{\times}\times \sD^{b} \subset \sD^{1+b}$ with $x$ mapping to the origin. Consider the pullback $\pi'$ of $\pi$ along $f|_{U_x}$. By \cite[Proposition 4.2.1 and Lemma 4.2.2]{DLLZ2}, we may further shrink $U_x$ such that $\pi': S\times_T (U_x\cap (\Dab)) \rightarrow (U_x\cap (\Dab))$ is just a standard Kummer-etale cover $\sD^{\times}\times \sD^{b}\rightarrow \sD^{\times}\times \sD^{b}$ (after possibly taking a finite extension of the base field). Letting $f'$ denote the map $S\times_T U_x \rightarrow S$, we have that that $\hat{f}'$ extends to a map $\sD^{1+b}\rightarrow \bar{S}$. This in turn implies that $f$ is rigid-subanalytic, and therefore by the rigid-subanalytic Riemann extension theorem \cite{p-adic-definable-chow}, we get that $f$ also extends to a map $ U_x \rightarrow \bar{T}^\an$, The lemma follows.    
   
\end{proof}

\begin{proof}[Proof of \autoref{prop:reducetoAgpar}]
    By the same argument as in the first paragraph of \autoref{lem:finiteetale}, we may replace $\Dstar\times X \subset \sD \times X$ by $\Dab \subset \sD^{1+b}$. By \autoref{hyperlocalextensionforX}, it suffices to prove that the map extends to $\sD^{1+b}_{\epsilon}$. Therefore, it suffices to reduce from the case of abelian type to the case of $\AgKpar$ with full level at $\ell$ and maximal parahoric level at $p$. 
    
    We first reduce to the case of $\AgK$ for some choice of level structure. First, let $(G,X)$ denote a Shimura datum of Hodge type and let $S(G,X)_{\sK_G}$ denote the Shimura variety with torsion-free level structure $\sK_G$. Replacing $\sK_G$ by a subgroup has the effect of replacing $S(G,X)_{\sK_G}$ by a finite \'etale cover. Further, this finite \'etale cover induces a finite map for the Baily-Borel compactifications. Therefore, we may replace $\sK_G$ by any finite-index subgroup by \autoref{lem:finiteetale}. The case of Hodge type Shimura varieties now reduces to the case of $\AgK$ (for an approrpiate choice of torsion-free level structure $\sK$) by \cite{DeligneTravaux} and the fact that a map between Shimura varieties extends to their Baily-Borel compactifications. Suppose now that $(G_1,X_1)$ is an abelian type Shimura datum and let $S(G_1,X_1)_{\sK_{G_1}}$ denote the Shimura variety with torsion-free level $\sK_{G_1}$. Then, there exists a connected component of $S(G_1,X_1)_{\sK_{G_1}}$ which is realized as the quotient of a connected component of an appropriate Shimura variety of Hodge type (for example, see \cite[Page 970]{Kisinintegralmodels}) by a finite group that acts freely (and at the cost of replacing $\sK_{G_1}$ by a finite-index subgroup we may assume that the Hodge type Shimura variety has torsion-free level). Therefore, the extension theorem for this connected component of $S(G_1,X_1)_{\sK_{G_1}}$ reduces to the case of Hodge type by again applying \autoref{lem:finiteetale}. Further, the Hecke algebra acts transitively on the set of connected components of $S(G_1,X_1)_{\sK_{G_1}}$ by \'etale correspondences. Therefore, the case of abelian type Shimura varieties reduces to the case of $\AgK$ with torsion-free level structure $\sK$. 

    By another application of \autoref{lem:finiteetale}, we may assume that $\AgK$ has full level-$\ell$ structure. Finally, let $\sKpar = \sK^p(\sKpar)_p$ with $\sK^p$ as above, and $(\sKpar)_p$ some maximal parahoric subgroup containing $\sK_p$. The level structure of $\AgKpar$ is still torsion-free (because of the condition imposed on $\ell$), and the map $\AgK\rightarrow \AgKpar$ is a finite \'etale map. Therefore, \autoref{main} reduces to the case of $\AgKpar$ with full level-$\ell$ structure as required.   

\end{proof}
For the rest of the paper, we will work in the setting of maximal parahoric level structure at $p$ and full level-$\ell$ structure. To further lighten notation, we will drop the subscript $\textrm{par}$ in $\AgKpar$.

\section{The bad reduction locus}\label{sec:badred}
Let $\AgK^{\good} \subset \AgK^{\an}$ denote the good reduction locus. This is just $\AgK^{\rig}$, the rigid-generic fiber of the formal $p$-adic completion of the integral model of $\AgK / \Z_p$, but we will use the notation $\AgK^{\good}$ for this space. The purpose of this section is to reduce the main theorem to the extension property of $\AgK^{\good}$: 
\begin{theorem}\label{thm:reducetocaseofgoodred}
    Suppose that for every $g\geq 1$, every rigid-analytic map $\Dab \rightarrow \AgK^{\good}$ defined over $F$ extends to a map $\sD^{1+b}\rightarrow \AgK^{\good}$. Then, \autoref{main} is true. 
\end{theorem}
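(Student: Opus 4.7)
The plan is to take a rigid-analytic map $f: \Dab = \Dstar \times \sD^b \to \AgK^{\an}$, split the argument into good and bad reduction cases, handle the good case by direct application of the hypothesis, and reduce the bad case to a lower-dimensional good-reduction situation via Raynaud uniformization. By \autoref{hyperlocalextensionforX}, it suffices to extend $f$ in a rational open neighborhood of each classical boundary point $(0, x) \in \{0\} \times \sD^b$; shrinking $\sD^b$ around $x$, the task amounts to extending $f$ to a rigid-analytic map $\sD \times \sD^b \to (\AgK^{\textrm{BB}})^{\an}$.

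First I would pull back the universal abelian scheme to obtain $A \to \Dab$. The full level-$\ell$ structure at an odd prime $\ell \neq p$ forces uniqueness of any semistable reduction, so Grothendieck--Mumford (or its higher-dimensional variant for smooth rigid bases with smooth boundary divisor) yields a semi-abelian extension $\bar A \to \sD \times \sD^b$, possibly after further shrinking $\sD^b$. Combining $\ell$-adic monodromy invariants of $A / \Dab$ (which are locally constant on the smooth connected base) with upper semi-continuity of toric rank, I would prove that the toric rank $r$ of $\bar A$ is constant along the boundary $\{0\} \times \sD^b$.

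If $r = 0$, then $\bar A$ is an abelian scheme, $f$ factors through $\AgK^{\good}$, and the hypothesis applied to $g$ directly extends $f$ to a map $\sD \times \sD^b \to \AgK^{\good} \hookrightarrow (\AgK^{\textrm{BB}})^{\an}$. If $r > 0$, Raynaud--Faltings--Chai uniformization along the formal completion at $\{0\} \times \sD^b$ decomposes $A$ as an extension of an abelian scheme $B$ of dimension $g - r$ by a rank-$r$ torus; crucially $B$ extends to an abelian scheme over all of $\sD \times \sD^b$, producing a rigid-analytic map $h : \sD \times \sD^b \to \mathcal{A}_{g-r, \sK'}^{\good}$ for an appropriate level $\sK'$. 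Via the Satake--Baily--Borel description of cusps, $\mathcal{A}_{g-r, \sK'}^{\an}$ embeds as a closed analytic boundary stratum of $(\AgK^{\textrm{BB}})^{\an}$, and $h|_{\{0\} \times \sD^b}$ is precisely the limit that $f$ ought to take along the boundary. To promote $f$ itself to an analytic map into $(\AgK^{\textrm{BB}})^{\an}$, I would use quasi-projectivity of $\AgK^{\textrm{BB}}$ together with the rigid-analytic Riemann extension theorem: since $h$ is analytic at the boundary and its image sits in a closed analytic subvariety of $(\AgK^{\textrm{BB}})^{\an}$, the map $f$ is bounded near each classical boundary point, and \autoref{hyperlocalextensionforX} then globalizes the local extensions.

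The main obstacle I expect is the global constancy of the toric rank. Upper semi-continuity only provides closedness of the stratification by rank; propagating genuine constancy along the entire annulus $\Dstar \times \sD^b$ requires controlling the $\ell$-adic monodromy of $A$ globally and relating it to the boundary reduction type, which seems to be the technical heart of Section \ref{sec:badred}. A secondary difficulty is the analytic compatibility between $f$ and its limit $h$: Raynaud uniformization is originally a formal/local construction, and extracting from it a genuinely rigid-analytic extension across $\{0\} \times \sD^b$ requires matching the formal completion description with the rigid structure of $f$ carefully enough to invoke the Riemann extension theorem in projective coordinates.
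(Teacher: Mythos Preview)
Your proposal conflates two different notions of degeneration, and this confusion makes the argument as written unusable. The ``good reduction locus'' $\AgK^{\good}$ refers to those classical points $x\in\AgK^{\an}$ whose associated abelian variety has good reduction \emph{modulo $p$} (i.e.\ over $\cO_F$); it has nothing to do with whether the rigid family $A\to\Dab$ degenerates as one approaches the puncture $\{0\}\times\sD^b$. Thus your dichotomy ``toric rank $r$ of $\bar A$ along $\{0\}\times\sD^b$ is $0$ or positive'' is not the same dichotomy as ``$f$ lands in $\AgK^{\good}$ or not,'' and the sentence ``if $r=0$ then $f$ factors through $\AgK^{\good}$'' is a non sequitur. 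More seriously, the rigid-analytic semi-abelian extension $\bar A\to\sD\times\sD^b$ you invoke via ``Grothendieck--Mumford'' is not available as input: Grothendieck's theorem is about schemes over a DVR, and its rigid analogue (extension of a polarized abeloid family over $\Dstar$ to a semi-abeloid family over $\sD$) is exactly \autoref{extensionforabeloids}, which the paper deduces \emph{from} the main theorem. Using it here would be circular.

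The paper's actual argument runs in the $p$-adic (vertical) direction rather than the $t$-adic (horizontal) one. First, Theorem~\ref{badisfullybad} shows that the abelian rank of the mod-$p$ reduction is constant over \emph{all} of $\Dab$, not merely along the boundary; this is the technical heart you anticipated, and it is established via the delicate thin/thick annulus analysis of \S\ref{sec:allgoodallbad}. Consequently $f(\Dab)$ lies in the tube over a single mod-$p$ boundary stratum $\AgKbb(g')$. One then passes to a toroidal compactification and uses the canonical map $\lambda:(\AgKtor)^{T_{g'}}\to\widehat{\mathcal A}_{g',\sK'}$ (this is where the Raynaud-type uniformization enters, but it is the mod-$p$ uniformization encoded in the integral model, not a rigid extension across $t=0$). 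Applying the hypothesis of the theorem to $\lambda\circ f$ extends it across the puncture; after shrinking, the image of $f$ is trapped in the residue disk over a single closed point of $\AgKbb(\Fpbar)$, and the ordinary Riemann extension theorem finishes. Your final step, by contrast, tries to match $f$ directly with a boundary map $h$ using analytic compatibility; the paper avoids this by reducing to boundedness in a residue disk.
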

\subsection{Constancy of reduction type}\label{sec:allgoodallbad}

\begin{definition} 
Let $A/F$ be an abelian variety with semistable reduction. The abelian rank of $A$ is defined to be the dimension of the maximal abelian quotient of the the identity component of the special fiber of the N\'eron model of $A$ over the ring of integers $\mathcal{O}_F$ of $F$.
\end{definition}

We prove the following result in this subsection. 
\begin{theorem}\label{badisfullybad}
Let $f:\Dstar \times \sD^b \rightarrow \AgK^{\an}$ denote a rigid-analytic map defined over $F$. Then, the abelian rank of $x^*\Aguni$ is constant for all classical (rigid) points $x \in \Dstar \times \sD^b$.
In particular, the image of $f$ is either entirely contained in the good-reduction locus, or entirely contained in the bad-reduction locus. 
\end{theorem}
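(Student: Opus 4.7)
My plan is to combine the triviality of the pulled-back $\ell$-adic Tate local system (which follows from the assumption of full level-$\ell$ structure with $\ell \neq p$) with Bosch-L\"utkebohmert's theory of semistable reduction for rigid-analytic families of abelian varieties, applied to an admissible formal model whose special fiber is controlled by the appendix of Patel. The point is to replace the pointwise ``abelian rank'' by a global invariant of a semi-abelian scheme on a formal model of $\Dstar \times \sD^b$, and then exploit connectedness of the corresponding special fiber.

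First I would pull back to obtain a polarized abelian scheme $\mathcal{A} := f^*\Aguni$ over $\Dstar \times \sD^b$ with trivialized $\ell$-torsion coming from the full level-$\ell$ structure on $\AgK$. Writing $\Dstar = \bigcup_n \Ann_n$ and observing that consecutive annuli overlap, it suffices to prove constancy of abelian rank on each closed affinoid $\Ann_n \times \sD^b$. Over such an affinoid, after replacing $F$ by a finite extension so that the family acquires semistable reduction in the sense of Bosch-L\"utkebohmert, their theorem on analytic uniformization of abelian varieties produces an admissible formal model $\fY_n$ of $\Ann_n \times \sD^b$, a polarized formal semi-abelian scheme $\mathcal{G}$ over $\fY_n$, and an \'etale lattice $M$ of periods, with the property that the rigid generic fiber of $\mathcal{G}$ modulo $M$ recovers $\mathcal{A}|_{\Ann_n \times \sD^b}$. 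By compatibility of this construction with base change to classical points, the abelian rank of the fiber $A_x$ at a classical $x \in \Ann_n \times \sD^b$ coincides with the abelian rank of the semi-abelian fiber of $\mathcal{G}$ at the specialization $\tilde x \in \fY_n^s$.

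It then suffices to show that this abelian rank is constant on $\fY_n^s$. Because the character sheaf of the toric part of a semi-abelian group scheme is an \'etale local system of locally constant rank, the toric rank (hence the abelian rank $g - t$) is a locally constant function on the base. By Patel's appendix, the special fibers of formal models of annuli are explicitly classified; in particular, one can choose $\fY_n = \Spf\, \cO_F\langle x,y,z_1,\ldots,z_b\rangle/(xy - \pi^n)$, whose special fiber $\Spec\, \F[x,y,z_1,\ldots,z_b]/(xy)$ is the product of a nodal curve with affine $b$-space, and in particular is topologically connected. Hence the toric rank is globally constant on $\fY_n^s$, and thus the abelian rank of $A_x$ is constant on $\Ann_n \times \sD^b$. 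Gluing across $n$ yields the result.

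The principal technical obstacle is verifying the compatibility between two a priori different constructions of the toric rank of $A_x$: on the one hand, the one arising from the N\'eron model of $A_x$ over $\cO_{F_x}$, and on the other, the one read off from the semi-abelian fiber $\mathcal{G}_{\tilde x}$ of the global relative Raynaud construction on $\fY_n$. This requires a pointwise base-change compatibility in Bosch-L\"utkebohmert's construction, together with care in ensuring that $\fY_n$ may be chosen so that $\mathcal{G}$ and its period lattice $M$ descend from a possible finite \'etale cover of the base (where the full level-$\ell$ structure, which trivializes the relevant monodromy obstruction, is crucial).
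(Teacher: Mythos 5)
There is a genuine gap in your argument, and it sits precisely in the sentence ``Because the character sheaf of the toric part of a semi-abelian group scheme is an \'etale local system of locally constant rank, the toric rank \ldots is a locally constant function on the base.'' This is false for semi-abelian group schemes in general: the toric rank is only upper semi-continuous, not locally constant, and indeed the identity component of the N\'eron model of an abelian variety with bad reduction over a trait is the standard example of a semi-abelian group scheme whose toric rank jumps on the special fibre. What \emph{would} make the toric rank locally constant is if $\mathcal{G}$ were a Raynaud extension (an extension $0\to T\to\mathcal{G}\to B\to 0$ with $T$ a torus of fixed rank), but Bosch--L\"utkebohmert's uniformization theory does not furnish such a $\mathcal{G}$ globally over an admissible formal model of an arbitrary rigid-analytic annulus: constructing a relative Raynaud extension of constant toric rank over all of $\fY_n$ is essentially equivalent to the statement that the degeneration type is constant, i.e.\ to the conclusion you want. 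Their theory is local on the base (or requires a base whose degeneration pattern is already controlled), and the obstruction to globalizing it is exactly the phenomenon the theorem rules out. So the invocation of their theorem assumes what must be proved. The further points you flag as ``technical obstacles'' --- compatibility of the formal semi-abelian fibre with the fibrewise N\'eron model, and descent of $(\mathcal{G},M)$ to the base --- are not peripheral; they, together with the non-existence of a global Raynaud extension, are where the real content lies.

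For comparison, the paper's proof does not go through a relative uniformization of the pulled-back abelian scheme at all. It first handles the \emph{thin} annulus $\Gm^{\rig}$ purely with prime-to-$p$ $\ell$-adic monodromy, and then for thick annuli replaces the family-of-abelian-varieties picture by the geometry of the mod $p$ Baily--Borel boundary together with Patel's classification of formal models of annuli: if the abelian rank were not constant, one would obtain, on a distinguished irreducible component $L\cong\A^1$ of the special fibre of the formal model, a map $L\setminus\{x\}\to\mathcal{A}_{g'}\otimes\overline{\F}_p$ that sends $x$ to a boundary stratum, and Lemma~\ref{newlemma} shows this is impossible --- over $\Gm\subset\A^1$ in characteristic $p$ the $\ell$-adic monodromy is simultaneously solvable (hence toric when semi-simple) and unipotent (from the semistable reduction guaranteed by full level-$\ell$ structure), so trivial, forcing good reduction at $x$. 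The paper is quite explicit (in the remark between the thin and thick cases) that a purely $\ell$-adic argument fails for thick annuli and that one must exploit the abelian-scheme structure together with the formal model; your proposal is in that spirit, but the particular mechanism you reach for does not exist in the required generality.
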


This result easily reduces to the one-dimensional case, that is to the case where $b =0$. Furthermore, we can reduce further to the case where the source of the rigid-analytic map $f$ is a closed rigid-analytic annulus $\Ann$ defined over $F$. The above result then follows immediately from \autoref{badannulusisfullybad} below. 
\begin{theorem}\label{badannulusisfullybad}
Let $f:\Ann \rightarrow \AgK^\an$ be a rigid-analytic map defined over $F$. Then, the abelian rank of $x^* \Aguni$ is constant for all classical points $x\in \Ann(L)$ for all finite extensions $L$ of $F$. 
In particular, the image of $f$ is either entirely contained in the good reduction locus, or entirely contained in the bad reduction locus. 
\end{theorem}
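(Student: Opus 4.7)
The strategy is to relate the abelian rank of $A_x := x^*\Aguni$ to inertia invariants on its $\ell$-adic Tate module, and then to exploit the pro-$\ell$ rigidity imposed by the full level-$\ell$ structure. Since $\AgK$ carries full level-$\ell$ with $\ell \geq 3$ and $\ell \neq p$, the mod-$\ell$ inertia action on $T_\ell(A_x)$ is trivial for every classical $x \in \Ann(L)$, and Grothendieck's criterion then forces semistable reduction of $A_x$ over $\cO_L$. For such $A_x$ the monodromy filtration on $V_\ell(A_x)$ gives
\[
a_x \;=\; \dim_{\Q_\ell} V_\ell(A_x)^{I_L} \;-\; g,
\]
so it suffices to show that $x \mapsto \dim V_\ell(A_x)^{I_L}$ is constant on classical points of $\Ann$.

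Let $\mathsf T := f^*\mathsf{T}_\ell$ denote the associated $\Z_\ell$-local system on $\Ann$. The level-$\ell$ trivialization makes $\mathsf T/\ell \mathsf T$ the constant local system of rank $2g$, so the monodromy of $\mathsf T$ takes values in the pro-$\ell$ principal congruence subgroup of $\gl_{2g}(\Z_\ell)$. Given two classical points $x, y \in \Ann$, after extending scalars to a common finite extension $L/F$ containing both residue fields, both become $L$-rational points of $\Ann_L$, and the inertia action at each factors as
\[
I_L \hookrightarrow \gal(\bar L/L) \xrightarrow{s_\bullet} \pi_1^{\mathrm{et}}(\Ann_L, \bar \bullet) \xrightarrow{\rho_\mathsf T} \gl_{2g}(\Z_\ell),
\]
where $s_\bullet$ is the section of $\pi_1^{\mathrm{et}}(\Ann_L, \bar\bullet) \twoheadrightarrow \gal(\bar L/L)$ induced by the corresponding $L$-point; the image $\tau_\bullet$ of a topological generator of tame inertia is unipotent, and $\dim \ker(\tau_\bullet - 1) = \dim V_\ell(A_\bullet)^{I_L}$. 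Since $x$ and $y$ are both $L$-rational, I expect an \'etale path in $\Ann_L$ joining $\bar x$ to $\bar y$ to give a $\gal(\bar L/L)$-equivariant conjugation identifying $s_x$ with $s_y$; as $I_L$ is normal in $\gal(\bar L/L)$, the inertia images are then conjugate in $\gl_{2g}(\Z_\ell)$, so $\tau_x$ and $\tau_y$ share a conjugacy class and hence $\dim \ker(\tau_x-1) = \dim \ker(\tau_y-1)$, giving the required constancy.

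The main obstacle is to make precise the claim that an \'etale path joining two $L$-rational points of $\Ann_L$ induces a conjugation identifying $s_x$ with $s_y$, i.e., that the two sections are cohomologous rather than merely related by an arbitrary $1$-cocycle valued in $\pi_1^{\mathrm{et}}(\Ann_{\bar L})$. To address this I would invoke Patel's appendix to fix a formal $\cO_F$-model $\mathfrak A$ of $\Ann$ whose special fiber is a chain of $\mathbb P^1$'s over $\bar \F$; the connectedness of this chain, together with the pro-$\ell$ (hence tame) nature of the monodromy for $\ell \neq p$, should supply the desired Galois-equivariant path. Carrying this out rigorously, and tracking how classical points specializing to different components of $\bar{\mathfrak A}$ are nevertheless joined by such paths, is where the $\ell$-adic/formal interplay promised in the introduction does the real work.
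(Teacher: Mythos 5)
Your reduction of abelian rank to $\dim V_\ell(A_x)^{I_L}$ via Grothendieck's orthogonality theorem, and the observation that full level-$\ell$ structure (with $\ell \neq p$, $\ell \geq 3$) forces semistable reduction and pro-$\ell$ monodromy, both match the paper. For the \emph{thin} annulus $\Gm^{\rig}$ your strategy is essentially correct: there the prime-to-$p$ fundamental group agrees with that of $\Gm_F$ (Lutkebohmert), and a section coming from an $\cO_F$-point has image equal to the \emph{canonical} kernel of $\pi'_1(\Gm_F) \to \pi'_1(\Gm_{\cO_F})$, which is normal — so the space of inertia invariants at any one classical point extends to a sub-local-system whose rank can then be compared across all points. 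That is exactly what the paper does in Proposition~\ref{galthinannulus} and the thin-annulus case of Theorem~\ref{badannulusisfullybad}.

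The gap, which you correctly sense but then underestimate, is in the thick-annulus case, and the proposed fix (Patel's formal model plus connectedness plus tame monodromy giving a ``Galois-equivariant path'') cannot be made to work. The crux is that sections $s_x, s_y$ of $\pi_1^{\mathrm{et}}(\Ann_L) \twoheadrightarrow \gal(\bar L/L)$ associated to two $L$-rational points are related by a twisted $1$-cocycle valued in $\pi_1^{\geom}$, not by an inner automorphism, and for a thick annulus $\Ann_m = \Sp(F\langle x,y\rangle/(xy-\pi^m))$ with $m \geq 1$ they genuinely need not be conjugate: the remark following the thin-annulus case in the paper points out that one can construct $\ell$-adic local systems on $\Ann_m$ that are ramified at some classical points and unramified at others. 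This is a counterexample to any argument phrased purely in terms of $\ell$-adic local systems. Patel's formal model does not rescue your approach, because the map $\pi'_{1,\geom}(\Ann_m) \to \pi'_1(\Spf\,\cO_F\langle x,y\rangle/(xy-\pi^m))$ fails to be an isomorphism when $m \geq 1$, so connectedness of the special fiber of $\mathfrak{A}$ does not translate into the section rigidity you need.

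The paper's actual route for thick annuli is structurally different and makes essential use of the moduli interpretation, not just of $\Tate$ as an abstract local system. One first shows (Corollary~\ref{cor:goodbaddescrip}, via Lemma~\ref{lem:zariski-tube-quasicompact} and rigid-subanalytic definability) that the locus of maximal abelian rank is a finite union of closed subannuli; then, after shrinking, one assumes for contradiction that the maximal-rank locus is exactly the outer thin subannulus. Patel's appendix is then used not to produce a path but to produce a formal model $\mathfrak{A}$ whose reduced special fiber is a tree of $\A^1$'s and $\P^1$'s, and one analyzes the induced morphism from a distinguished $\A^1$-component $L$ into a mod-$p$ compactification of $\AgK$. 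The punchline is Lemma~\ref{newlemma}: over $\Fpbar$, there is no non-constant map $\A^1 \to \overline{\AgK}$ sending $\Gm$ into $\AgK$ and $0$ to the boundary, because the prime-to-$p$ $\pi_1$ of $\Gm_{\Fpbar}$ is abelian (hence the $\ell$-adic monodromy is toric) while the local monodromy at $0$ must be unipotent, forcing trivial monodromy and good reduction at $0$. This step uses that the local system is the Tate module of an abelian scheme, which is precisely the ingredient your section-conjugacy argument cannot access.
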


Before proving of these results, we will first describe the Baily-Borel compactification $\AgKbb$ of $\AgK$. The boundary, $\AgKbb \setminus \AgK$, has a natural stratification, and each stratum is canonically isomorphic to $\mathcal{A}_{g'\mathsf{K}'}$, for $0\leq g' < g$. The level structure $\sK'$ inherits our initial assumptions on the level structure on $\AgK$ (namely, maximal parahoric at $p$, and full level structure at an odd prime $\ell \neq p$). For brevity, we denote this stratum by $\AgKbb(g')$. We also have that each $\AgKbb(g')$ is locally closed in $\AgKbb$, its closure is  the union of strata $\AgKbb(g'')$ for each $0\leq g'' \leq g'$. The above stratification exists integrally over $\mathbb Z_p$.
Finally, the abelian rank of the mod $\pi$ reduction of an abelian variety over $F$ can be easily read off using the mod $p$ Baily-Borel compactification. Indeed, let $x\in \AgK(F)$ denote a point which specializes a point in $\AgKbb(g')(\F)$. Then, the reduction of the abelian variety parameterized by $x$ has abelian rank $g'$. We make the following definition for brevity: 
\begin{definition}\label{abelianrankdefn}
    Let $x\in \AgK(F)$ be a point, and let $A_x$ denote the abelian variety parameterized by $x$. We define the abelian rank of $x$ to be the abelian rank of the mod $\pi$ reduction of $A$. 
\end{definition}

\subsubsection{Thin annuli}
We will first prove \autoref{badannulusisfullybad} for \emph{thin annuli}, i.e. when the inner and outer radii are equal. Our proof builds on \cite[Theorem 4.1]{PST}. The main results for this case will be \autoref{galthinannulus} and \autoref{cor:extendstoGm} below, from which we will deduce the constancy of abelian ranks. Given any variety (or rigid-analytic variety) defined over $F$ and given any classical $L$-point of this variety (or rigid-analytic variety) $x$ where $L/F$ is a finite extension, we will let $\bar{x}$ denote the geometric point induced by $x$.

\begin{proposition}\label{galthinannulus}
   Let $\Ann_0 := \Gm^{\rig}$
    denote the thin annulus over $F$. Let $\mathbb{L}$ be an \'etale $\mathbb Z_\ell$-local system on $\Ann_0$ for a prime $\ell \neq p$. Suppose that there exists a classical $F$-point $x\in \Ann_0(F)$ such that the Galois representation of $\gal_F$ induced by $\mathbb{L}_{\bar{x}}$ is unramified. Then $\mathbb{L}$ is unramified everywhere; i.e.
    given any finite extension $L$ of $F$ and any classical $L$-point $y\in \Ann_0(L)$, the $\ell$-adic Galois representation of $\gal_L$ induced by $\mathbb{L}_{\bar{y}}$ is unramified.    
\end{proposition}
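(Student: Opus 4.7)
The plan is to exploit two features of the setup: the thin annulus $\Ann_0 = \Gm^{\rig} = \Spa(F\langle t, t^{\pm 1}\rangle)$ consists only of classical points with $v(t) = 0$, and the geometric étale fundamental group $\pi_1^{\mathrm{et}}(\Ann_{0,\bar F})$ is the profinite completion of $\Z$ (with pro-$\ell$ part $\Z_\ell(1)$), in particular abelian; its extension to $G_F$ is controlled by Kummer theory. First I would reduce to the case where $\mathbb{L}$ is a finite $\Z/\ell^n$-local system, since unramifiedness of $\rho_{\bar y}$ can be checked modulo $\ell^n$ for every $n$; the local system then corresponds to a finite étale Galois cover $Y \to \Ann_0$ together with a representation $\rho: \pi_1^{\mathrm{et}}(\Ann_0, \bar\eta) \to G \subseteq \gl_n(\Z/\ell^n)$, whose geometric image $\rho(\pi_1^{\geom})$ is automatically cyclic. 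By base change along $G_L \hookrightarrow G_F$, we may further assume both $x$ and $y$ are $L$-rational and take $F = L$.

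Next, the points $x$ and $y$ with coordinates $a_0, b \in \O_L^\times$ determine sections $s_x, s_y : G_L \to \pi_1^{\mathrm{et}}(\Ann_0, \bar\eta)$ of the canonical surjection. Writing $s_y(g) = c(g)\, s_x(g)$ defines a $1$-cocycle $c : G_L \to \pi_1^{\geom}$ whose class, after projection to the abelian pro-$\ell$ quotient $\pi_1^{\geom,\,\mathrm{ab},\,\ell} \simeq \Z_\ell(1)$, is by the standard Kummer identification for $\Gm$ equal to the Kummer class of $b/a_0 \in L^\times$ in $H^1(G_L, \Z_\ell(1))$. Since $v(b/a_0) = 0$, the element $b/a_0$ is a unit in $\O_L^\times$, and every such unit is an $\ell^n$-th power in $\O_{L^{\mathrm{ur}}}^\times$ (using $\ell \neq p$); hence $[b/a_0]$ lies in the unramified subspace $H^1_{\mathrm{ur}}(G_L, \Z_\ell(1))$, so the abelianization of $c|_{I_L}$ is trivial. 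Because $\rho(\pi_1^{\geom})$ is abelian, $\rho(c(g))$ depends only on the abelianization of $c(g)$, and therefore $\rho(c(g)) = 1$ for all $g \in I_L$. It follows that $\rho_y|_{I_L} = \rho_x|_{I_L}$, so the hypothesis that $\rho_x|_{I_L}$ is trivial forces the same for $\rho_y|_{I_L}$.

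The main technical obstacle is the Kummer-theoretic identification of the class of $c$ with the Kummer class of $b/a_0$. A clean way to avoid abstract non-abelian cocycle manipulations is to argue directly with the explicit Kummer structure of the cover: any connected finite étale cover of $\Ann_{0,\bar F}$ of degree $m$ becomes, after passing to the pro-$\ell$ truncation, the Kummer cover $w^m = t$, and its $F$-forms are classified by $F^\times/(F^\times)^m$ and take the shape $z^m = c_0 \cdot t$ for some $c_0 \in F^\times$. The Galois representation at a point of coordinate $u \in \O_L^\times$ is then the Kummer character of $c_0 u$, whose ramification is controlled by $v(c_0 u) = v(c_0) \pmod{m}$, a quantity independent of the chosen point of the thin annulus. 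The cyclic structure of $\rho(\pi_1^{\geom})$ then reduces the general (not necessarily cyclic) case to this Kummer one.
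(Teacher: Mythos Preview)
Your approach is correct in spirit and takes a genuinely different route from the paper's. The paper first passes to $\breve F$ (so that ``unramified'' becomes ``trivial''), invokes L\"utkebohmert's comparison $\pi'_1(\Ann_0) \cong \pi'_1(\Gm_F)$ to transport the problem to the algebraic $\Gm_F$, and then proves the structural statement that the local system actually descends to the integral model $\Gm_{\cO_F}$: the key group-theoretic point is that the image of the section $s_x$ coincides with the kernel of $\pi'_1(\Gm_F) \to \pi'_1(\Gm_{\cO_F})$, so that specialization at any $\cO_L$-point factors through $\pi'_1(\Spec \cO_L)$. Your Kummer/cocycle comparison of $s_x$ and $s_y$ is more hands-on, avoids the reduction to $\breve F$ and the integral model altogether, and makes transparent that the thinness of the annulus enters precisely as the condition $v(b/a_0)=0$. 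The paper's route buys a reusable structural statement (the descent to $\Gm_{\cO_F}$); yours buys directness.

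Two points need tightening. First, $\pi_1^{\mathrm{et}}(\Ann_{0,\bar F})$ is not $\hat{\Z}$; only its prime-to-$p$ quotient is $\hat{\Z}^{(p')}$, so you should work throughout with $\pi'_1$ (as the paper does), and the Kummer identification should be taken in $\hat{\Z}^{(p')}(1)$ rather than just $\Z_\ell(1)$. Second, the implication ``$[c|_{I_L}]=0$ in $H^1$, hence $\rho(c(g))=1$ for $g\in I_L$'' does not follow merely from $\rho(\pi_1^{\geom})$ being abelian: vanishing of a cohomology class only says the cocycle is a coboundary, not that it is zero. The missing step is to use the hypothesis $\rho_x|_{I_L}=1$ \emph{here}: it makes the $I_L$-action on $N=\rho(\pi_1^{\geom})$ (which is conjugation by $\rho_x(I_L)$) trivial, so $B^1(I_L,N)=0$ and the trivial class forces $\rho\circ c|_{I_L}\equiv 1$. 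With these two fixes your argument goes through.
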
 

Firstly, in order to prove \autoref{galthinannulus} we may assume that $F = \Breve{F}$, the completion of the maximal unramified extension of $F$ inside $\overline{F}$. 

Any $\ell$-adic local system on $\Gm/\cO_F$ can be pulled back to give a local system on $\Ann_0$. The main step in the proof of \autoref{galthinannulus} is the following result.
\begin{proposition}\label{cor:extendstoGm}
Let $\mathbb{L}$ denote an \'etale $\mathbb Z_\ell$-local system on $\Ann_0$ with prime-to-$p$ monodromy. Suppose there exists a classical point $x\in \Ann_0(F)$ such that $\mathbb{L}_x$ is trivial. Then, $\mathbb{L}$ is pulled back from a local system on $\Gm/\cO_F$. 
\end{proposition}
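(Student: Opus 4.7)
The plan is to compare the étale fundamental groups of $\Ann_0$, of the smooth affine formal scheme $\mathfrak G_m := \Spf(\cO_F\langle t,t^{-1}\rangle)$, and of the algebraic scheme $\Gm/\cO_F$, and to use the hypothesis that $\mathbb L_x$ is trivial to eliminate the inertia subgroup $I_F\subseteq \gal_F$ in the monodromy representation.

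Let $\rho: \pi_1(\Ann_0,\bar x)\to \gl_r(\Z_\ell)$ be the representation corresponding to $\mathbb L$. The prime-to-$p$ monodromy assumption means $\rho$ factors through the quotient $\tilde\pi_1(\Ann_0)$ classifying finite étale covers of $\Ann_0$ whose geometric monodromy has order coprime to $p$; this quotient sits in an exact sequence
\[
1 \to \pi_1(\Ann_{0,\bar F})^{(p')} \to \tilde\pi_1(\Ann_0) \to \gal_F \to 1,
\]
with $\pi_1(\Ann_{0,\bar F})^{(p')}\cong \hat\Z^{(p')}(1)$ generated by the Kummer covers $t\mapsto t^n$ with $\gcd(n,p)=1$. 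The analogous quotient for $\mathfrak G_m$ reads
\[
1 \to \pi_1(\mathfrak G_{m,\cO_{\breve F}})^{(p')} \to \tilde\pi_1(\mathfrak G_m) \to \gal(\bar\F/\F) \to 1,
\]
with the same canonical identification of the kernel. The pullback functor on finite étale covers induces a commuting ladder whose left vertical arrow is an isomorphism and whose right vertical arrow is the projection $\gal_F\twoheadrightarrow \gal(\bar\F/\F)$ of kernel $I_F$; a short diagram chase shows the middle arrow is surjective with kernel (abstractly) $I_F$.

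The $F$-point $x$ extends to an $\cO_F$-point of $\mathfrak G_m$ and gives compatible splittings $s_x: \gal_F \to \tilde\pi_1(\Ann_0)$ and $s_{\bar x}: \gal(\bar\F/\F) \to \tilde\pi_1(\mathfrak G_m)$, and this compatibility implies that $s_x(I_F)$ actually equals the kernel of $\tilde\pi_1(\Ann_0)\to \tilde\pi_1(\mathfrak G_m)$. By hypothesis $\rho\circ s_x$ is the trivial Galois action on $\mathbb L_x$, so $\rho$ vanishes on $s_x(I_F)$, and hence $\rho$ descends to a representation of $\tilde\pi_1(\mathfrak G_m)$. Since prime-to-$p$ finite étale covers of $\mathfrak G_m$ are classified via Hensel lifting by prime-to-$p$ finite étale covers of the special fiber $\Gm_{\F}$ (i.e.\ Kummer covers $s^n = ct^k$), and these lift canonically to $\Gm/\cO_F$, the representation further descends to $\pi_1(\Gm/\cO_F)$, so $\mathbb L$ is pulled back from a local system on $\Gm/\cO_F$.

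The main subtlety is the identification $s_x(I_F) = \ker\bigl(\tilde\pi_1(\Ann_0)\to \tilde\pi_1(\mathfrak G_m)\bigr)$; this is where the hypothesis that $x$ is an $F$-point (rather than just a $\bar F$-point, where trivialization of the stalk would hold automatically) is crucial, since it is what forces the two sections $s_x$ and $s_{\bar x}$ to be compatible. Modulo this, the argument reduces to the well-known prime-to-$p$ specialization isomorphism for the geometric fundamental group of $\Gm$, which here is concrete (Kummer theory) on both sides.
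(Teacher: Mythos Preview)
Your argument is correct and follows essentially the same strategy as the paper: both proofs identify the kernel of the descent map on (a suitable quotient of) $\pi_1(\Ann_0)$ with the image of the section $s_x$ restricted to inertia, and then invoke the triviality of $\mathbb L_x$ to annihilate that kernel. The differences are in the routing. The paper first passes to $F=\breve F$ (so that $\gal_F=I_F$ and the bottom row of your ladder collapses), and then uses \autoref{templempione} to replace $\pi'_1(\Ann_0)$ by $\pi'_1(\Gm_F)$ before descending to $\Gm/\cO_F$; you instead keep $F$ arbitrary and compare $\Ann_0$ directly with the formal model $\mathfrak G_m$, using that $\pi_1(\mathfrak G_m)\cong\pi_1(\Gm_\F)\cong\pi_1(\Gm/\cO_F)$. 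Either route works; yours has the mild advantage of not needing the intermediate algebraic step, while the paper's has the advantage that after reducing to $\breve F$ the diagram chase identifying $s_x(I_F)$ with the kernel becomes a one-liner. One small slip: your parenthetical that prime-to-$p$ covers of $\Gm_\F$ are ``Kummer covers $s^n=ct^k$'' is only accurate when $\F$ is algebraically closed---in general there is also the $\gal(\bar\F/\F)$ contribution---but this does not affect the argument, which only needs the identification $\tilde\pi_1(\mathfrak G_m)\cong\tilde\pi_1(\Gm/\cO_F)$.
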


The proofs of \autoref{galthinannulus} and \autoref{cor:extendstoGm} will be entirely $\ell$-adic, so we first start by recalling some results about the prime-to-$p$ \'etale fundamental groups of thin annuli. Let $\pi'_1$ denote the maximal prime-to-$p$ quotient of the \'etale fundamental group. 

\begin{lemma}\label{templempione}
Let $x\in \Ann_0(F)$ be a classical point. Let $x$ also denote the induced algebraic point of the variety ${\Gm}_F$. The canonical map $\pi'_1(\Ann_0,\bar{x}) \rightarrow \pi'_1({\Gm}_F,\bar{x})$ is an isomorphism.

\end{lemma}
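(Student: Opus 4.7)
Proof plan:

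The plan is to establish the assertion by identifying both fundamental groups through a comparison with the prime-to-$p$ fundamental group of the smooth formal $\cO_F$-model
\[
\mathfrak{X} := \Spf \cO_F\langle t, t^{-1}\rangle
\]
of the annulus $\Ann_0$; the special fiber of $\mathfrak{X}$ is precisely the algebraic ${\Gm}_{\mathbb{F}}$ over the residue field $\mathbb{F}$. Concretely, I would show that the pullback functor from the category of finite \'etale Galois covers of ${\Gm}_F$ of prime-to-$p$ degree to the analogous category over $\Ann_0$ (pointed at $\bar x$) is an equivalence of Galois categories, which is equivalent to the statement of the lemma.

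For essential surjectivity, let $f : Y \to \Ann_0$ be a connected finite \'etale Galois cover of degree $n$ coprime to $p$. I would take $\mathfrak{Y}$ to be the normalization of $\mathfrak{X}$ inside the finite \'etale $\cO_{\Ann_0}$-algebra $f_*\cO_Y$. Then $\mathfrak{Y} \to \mathfrak{X}$ is finite and \'etale on the rigid generic fiber, and its only possible codimension-one locus of ramification is the generic point of the special fiber, i.e.\ the Gauss valuation on $F\langle t, t^{-1}\rangle$. Since $\gcd(n,p) = 1$, this ramification is tame, and Abhyankar's lemma, combined with a suitable tamely ramified base change $F'/F$ absorbing the ramification index, ensures that $\mathfrak{Y}_{\cO_{F'}} \to \mathfrak{X}_{\cO_{F'}}$ becomes \'etale in codimension one. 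Zariski--Nagata purity of the branch locus on the regular two-dimensional formal scheme $\mathfrak{X}_{\cO_{F'}}$ then upgrades this to genuine \'etaleness everywhere, so that $\mathfrak{Y}_{\cO_{F'}} \to \mathfrak{X}_{\cO_{F'}}$ is a finite \'etale cover of formal schemes.

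Invariance of the \'etale site under nilpotent thickenings then converts this formal cover to a finite \'etale cover of the algebraic special fiber ${\Gm}_{\mathbb{F}'}$, which algebraizes explicitly (finite \'etale covers of ${\Gm}_{\mathbb{F}'}$ of prime-to-$p$ degree are essentially Kummer covers) to a finite \'etale cover of ${\Gm}_{\cO_{F'}}$; its algebraic generic fiber is a cover $\tilde Y'_{F'} \to {\Gm}_{F'}$ with $\tilde Y'_{F'}|_{\Ann_{0, F'}} \cong Y_{F'}$. Finally, since $Y$ is already defined over $F$, the canonical Galois descent datum on $Y_{F'}$ transports uniquely to $\tilde Y'_{F'}$ by uniqueness of the algebraic preimage, producing the sought cover $Y' \to {\Gm}_F$; this same uniqueness yields full faithfulness of the pullback functor. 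The main obstacle is the Abhyankar--purity step on the formal model, in particular verifying that a single tamely ramified base change $F'/F$ uniformly absorbs the ramification along the Gauss valuation; once this is in hand, the descent from $F'$ back to $F$ and the remaining categorical comparisons are essentially formal.
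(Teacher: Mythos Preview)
Your approach is sound but takes a genuinely different route from the paper. The paper's argument is short and purely on the generic fiber: it factors the canonical map as
\[
\pi'_1(\Ann_0,\bar{x}) \to \pi'_1(\Gm^{\an},\bar{x}) \to \pi'_1(\Gm_F,\bar{x}),
\]
invokes L\"utkebohmert's algebraization theorem for finite \'etale covers of $\Gm^{\an}$ to handle the second arrow, and then L\"utkebohmert's classification of geometric prime-to-$p$ covers of a closed annulus (they are all Kummer) to identify the geometric prime-to-$p$ fundamental groups of $\Ann_0$ and $\Gm^{\an}$; the five lemma applied to the arithmetic--geometric exact sequences finishes. No integral models, no base change to $F'$, no descent.

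Your plan instead passes through the smooth formal model $\mathfrak{X}=\Spf\cO_F\langle t,t^{-1}\rangle$, spreads a cover of $\Ann_0$ to an \'etale cover of $\mathfrak{X}_{\cO_{F'}}$ via normalization, tame Abhyankar, and purity, and then compares with the algebraic $\Gm_{\cO_{F'}}$ through the common special fiber. This is more self-contained (you do not need L\"utkebohmert as a black box) but longer, and the key content migrates into two places you should tighten. First, the step ``covers of $\Gm_{\mathbb{F}'}$ of prime-to-$p$ degree are essentially Kummer, hence lift to $\Gm_{\cO_{F'}}$'' is not an invariance-of-\'etale-site statement (the pair $(\Gm_{\cO_{F'}},\Gm_{\mathbb{F}'})$ is not henselian); over a non-algebraically-closed residue field you must also lift the arithmetic piece and the extension data, which works because $\cO_{F'}$ is henselian and unramified extensions correspond to residue-field extensions, but this deserves a sentence. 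Second, your descent from $F'$ back to $F$ relies on full faithfulness of the restriction functor from $\Gm_{F'}$-covers to $\Ann_{0,F'}$-covers, which you obtain only after running the same chain of equivalences over $F'$; make this logical order explicit so the argument is not circular. The paper's route buys brevity by outsourcing exactly these two ingredients to L\"utkebohmert.
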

\begin{proof}
The canonical map $\pi'_1(\Ann_0,\bar{x}) \rightarrow \pi'_1({\Gm}_F,\bar{x})$ factors as $$\pi'_1(\Ann_0,\bar{x}) \rightarrow \pi'_1(\Gm^{\an},\bar{x}) \rightarrow \pi'_1({\Gm}_F,\bar{x}).$$ 
It suffices to prove that both the above maps are isomorphisms. By \cite[Theorem 3.1]{lutkebohmert-riemann}, we have that every finite \'etale cover of $\Gm^{\an}$ is uniquely algebraizable, and therefore the second map is an isomorphism. It suffices to prove that the first is an isomorphism. 

By \cite[Theorem 2.11 (b)]{lutkebohmert-riemann}, the geometric prime-to-$p$ \'etale covers of $\Ann_0$ are given by adjoining prime-to-$p$ roots of $t$. This is true for ${\Gm}_{\bar{F}}$, and therefore also for ${\Gm^{\an}}_{\bar{F}}$ (again by \cite[Theorem 3.1]{lutkebohmert-riemann}). 
Therefore, the canonical map $\pi'_{1,\geom}(\Ann_0,\bar{x}) \rightarrow \pi'_{1,\geom}(\Gm^{\an},\bar{x})$ is an isomorphism. As the arithmetic-geometric exact sequences of fundamental groups are functorial for maps between rigid-analytic varieties, the map $\pi'_1(\Ann_0,\bar{x}) \simeq \pi'_1(\Gm^{\an},\bar{x})$ is an isomorphism, and the result follows. 
\end{proof}

We are now ready to prove \autoref{cor:extendstoGm} and \autoref{galthinannulus}.
\begin{proof}[Proof of \autoref{cor:extendstoGm}]
By \autoref{templempione}, $\mathbb{L}/\Ann_0$ is induced by a local system on ${\Gm}_F$ (which we shall also denote by $\mathbb{L}$). In order to prove that $\mathbb{L}/{\Gm}_F$ is pulled back from a local system on ${\Gm}_{\cO_F}$, it suffices to prove that the image $J$ of the map $\gal'(\bar{F}/F)\rightarrow \pi'_1({\Gm}_F,\bar{x})$ induced by the point $x\in \Gm(F)$ equals the kernel of the canonical map $\pi'_1({\Gm}_F,\bar{x})\rightarrow \pi'_1({\Gm}_{\cO_F},\bar{x})$. That $J$ is contained in the kernel follows from the fact that $x$ is induced by a $\cO_F$-point of ${\Gm}_{\cO_F}$, and $F$ has algebraically closed residue field. 
Every element of $\pi'_1({\Gm}_F,\bar{x})$ can be uniquely written as $g\cdot \alpha_{\geom}$, where $g\in J$ and $\alpha_{\geom} \in \pi'_{1,\geom}({\Gm}_F,\bar{x})$. But the composite map $\pi'_{1,\geom}({\Gm}_F,\bar{x})\rightarrow \pi'_1({\Gm}_F,\bar{x}) \rightarrow \pi'_1({\Gm}_{\cO_F},\bar{x})$ is an isomorphism. Therefore, $J$ is indeed equal to the kernel of the map $\pi'_1({\Gm}_F,\bar{x})\rightarrow \pi'_1({\Gm}_{\cO_F},\bar{x})$. The result follows. 
\end{proof}

\begin{proof}[Proof of \autoref{galthinannulus}]
By \autoref{templempione}, we may work in the setting of $\mathbb{L}/{\Gm}_{F}$, with the hypothesis that $\mathbb{L}_x$ is constant for a point $x$ induced by a $\cO_F$-valued point of ${\Gm}_{\cO_F}$. We must prove that $\mathbb{L}_y$ is constant for every $y$ induced by a $\cO_L$-valued point of ${\Gm}_{\cO_F}$ for every finite extension $L/F$. \autoref{cor:extendstoGm} yields that $\mathbb{L}$ descends to $\Gm$. It then follows that the Galois representation of $\gal(\bar{L}/L)$ induced by $y$ factors through the \'etale fundamental group of $\cO_L$. The result follows as we're in the case of algebraically closed residue fields. 
\end{proof}

We are now ready to prove our results for thin annuli. 

\begin{proof}[Proof of \autoref{badannulusisfullybad} for thin annuli]
    We will again assume that $F = \breve{F}$. Let $A/F$ denote an abelian variety with semistable reduction. By \cite{Grothendieck}, the abelian rank of $A$ determines and is fully determined by the dimension $d$ of inertial invariants of $T_{\ell}(A)$. Indeed, if $r$ is the abelian rank of $A$, the dimension of inertial invariants equals $d = g+r$.
    
    Let $\mathbb{L}/\Ann_0$ denote the family of $\ell$-adic Tate modules induced by $\Aguni$. Recall that $\Aguni[\ell]$ is the constant group scheme $(\Z/\ell\Z)^{2g}$, and so $\mathbb{L}$ has prime-to-$p$ monodromy. By the above remark, it suffices to prove that the dimension of inertial invariants of $\mathbb{L}_x$ is independent of the choice of classical point $x \in \Ann_0$. To that end, let $x\in \Ann_0$ denote the classical point with maximal inertial invariants, and let $\mathbb{L}^I_{\bar{x}} \subset \mathbb{L}_{\bar{x}}$ denote the space of inertial invariants (as we are in the setting of algebraically closed residue fields, this is the same as the space of Galois invariants). By \autoref{templempione}, we may work in the setting of $\mathbb{L}/{\Gm}_F$. Consider the image $J$ of the map $\gal'(\bar{F}/F)\rightarrow \pi'_1({\Gm}_F,\bar{x})$. As in the proof of \autoref{templempione}, $J$ is a normal subgroup of $\pi'_1({\Gm}_F,\bar{x})$, and therefore $\mathbb{L}^I_{\bar{x}}$ is preserved by the action of $\pi'_1({\Gm}_F,\bar{x})$. It follows that $\mathbb{L}^I_{\bar{x}}$ is the fiber at $x$ of a sub local system $\mathbb{L}'$ of $\mathbb{L}$. Restricting $\mathbb{L}'$ to $\Ann_0$, we may now apply \autoref{galthinannulus} to deduce that $\mathbb{L}'$ is unramified everywhere. Therefore, the rank of the space of inertial invariants $\mathbb{L}^{I}_{\bar{y}}$ is greater than or equal the rank of $\mathbb{L}'$, and the assumption on $x$ implies that these two ranks are equal, as required.

\end{proof}

\begin{remark}
 Lutkebohmert also proves that the prime-to-$p$ \'etale fundamental group of closed thick annuli $\Ann_m = \Sp(F\langle x,y \rangle / (xy - \pi^m))$ for $m\geq 1$ is the same as the thin (and algebraic) case. However, this doesn't suffice to prove Theorem \ref{badannulusisfullybad} for thick annuli. In order for the same argument to go through, we would need to replace ${\Gm}_{\cO_F}$ with the integral model $\Spf \cO_F\langle x,y \rangle / (xy - \pi^m))$, and prove that $\mathbb{L}$ is induced by a local system on $\Spf \cO_F\langle x,y \rangle / (xy - \pi^m))$. Unlike in the case $m=0$, the natural map ${\pi'}_{1,\geom}(\Ann_m,\bar{x}) \rightarrow \pi'_1 \Spf \cO_F\langle x,y \rangle / (xy - \pi^m))$ is not an isomorphism! Indeed, it is easy to construct $\ell$-adic local systems on $\Ann_m$ which are ramified at certain points but unramified at others, in stark contrast to the case of $\Ann_0$. In order to prove \autoref{badannulusisfullybad} in the case of thick annuli (which we do below in Section \ref{subsubthick}), it will not suffice to work solely with $\ell$-adic local systems -- we will make strong use of the fact that the local systems in question are induced by families of abelian varieties on $\Ann_m$. Whereas, our proof of \autoref{badannulusisfullybad} for thin annuli follows (after using \cite{Grothendieck}) directly from \autoref{galthinannulus}, which is a statement purely about $\ell$-adic local systems on thin annuli. 
\end{remark}

\subsubsection{Thick annuli}\label{subsubthick}

\begin{lemma}\label{lem:zariski-tube-quasicompact}
    Suppose $\Ann = (\Spf \cO_F [x,y]/( xy - \pi^n ))^{\rig}$ is a rigid-analytic annulus over $F$ and $\mathfrak{A}$ is an admissible formal $\cO_{{F}}$-model of $\Ann$ as found in \autoref{prop:specialfiber}. Let $W \subseteq \mathfrak{A}\otimes_{\cO_{{F}}} \mathbb{F}$ be a Zariski-open subset of the special fiber. Then the tube over $W$ inside $\Ann$ is a quasi-compact rational open subvariety of $\Ann$.   
\end{lemma}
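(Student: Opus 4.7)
The claim has two parts: quasi-compactness and rationality, and I would address both simultaneously by exhibiting the tube $\mathrm{sp}^{-1}(W)$ as a finite union of rational subdomains of $\Ann$. The essential input is Patel's explicit description of $\mathfrak{A}$ from the appendix (\autoref{prop:specialfiber}), which makes the specialization map locally tractable.

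First, since $\mathfrak{A}\otimes_{\cO_F}\F$ is a noetherian scheme, every Zariski-open $W$ is quasi-compact. I would then cover $\mathfrak{A}$ by finitely many formal affine open subsets $\mathfrak{U}_i = \Spf R_i$ chosen so that each rigid generic fiber $\mathfrak{U}_i^{\rig}$ is a rational subdomain of $\Ann$. Such a cover exists by the Raynaud correspondence between admissible formal $\cO_F$-schemes and quasi-compact rigid spaces, combined with Patel's description of the special fiber as a chain of explicit components meeting at known nodes; one can thereby write down the charts of $\mathfrak{A}$ concretely, with generic fibers realized as rational subdomains of the standard annulus.

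Within each chart, $W_i := W \cap (\mathfrak{U}_i\otimes_{\cO_F}\F)$ is a quasi-compact open of the affine noetherian scheme $\Spec(R_i/\pi)$, hence a finite union of basic opens $D(\bar f_{i,1}) \cup \cdots \cup D(\bar f_{i,n_i})$ for some $f_{i,j} \in R_i$. The tube over such a basic open is the rational subdomain $\{x \in \mathfrak{U}_i^{\rig} : |f(x)| = 1\}$: for a point $x \in \mathfrak{U}_i^{\rig}$, the specialization $\mathrm{sp}(x)$ is the prime $\{r \in R_i : |r(x)| < 1\}$ of $R_i/\pi$, and this prime lies in $D(\bar f)$ iff $|f(x)| = 1$. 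Therefore
\[
\mathrm{sp}^{-1}(W) \;=\; \bigcup_{i,\,j} \bigl\{x \in \mathfrak{U}_i^{\rig} : |f_{i,j}(x)| = 1\bigr\},
\]
a finite union of rational subdomains of $\Ann$, which gives both quasi-compactness and the required rational structure.

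The main potential obstacle is producing the formal affine cover with each $\mathfrak{U}_i^{\rig}$ a rational subdomain of $\Ann$; this is precisely where Patel's classification of the possible special fibers enters, as it provides sufficiently explicit local charts to make the identification of the generic fibers as rational subdomains completely transparent. The remainder of the argument is a standard application of the Raynaud dictionary between admissible formal $\cO_F$-schemes and their rigid generic fibers.
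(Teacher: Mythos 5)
Your proposal is correct and lands in the same neighborhood as the paper's proof, but the mechanics differ in one useful way. Both arguments begin by passing to a finite formal affine cover $\mathfrak{A}=\bigcup_i\Spf(B_i)$ and working chart by chart. Where you diverge: the paper argues that, because the special fiber is one-dimensional with irreducible components isomorphic to opens of $\A^1_{\overline\F}$, the complement of $W$ in each chart is a finite set of closed points cut out by a \emph{single} element $\bar b\in B_i\otimes\F$, so the tube is the generic fiber of the completed localization $\Spf(B_i\langle b^{-1}\rangle)$ and hence affinoid. You instead observe that $B_i\otimes\F$ is noetherian, so $W_i$ is a \emph{finite union} of basic opens $D(\bar f_{i,j})$, and identify the tube over each $D(\bar f_{i,j})$ as the rational subdomain $\{|f_{i,j}|=1\}$. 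Your route is somewhat more elementary and, notably, avoids a delicate point in the paper's proof: if $W$ misses an entire irreducible component of $\mathfrak{A}\otimes\overline\F$, the complement in a chart need not be a finite set of closed points, and producing a single $\bar b$ cutting out that complement requires more care. Your finitely-many-functions argument sidesteps this entirely. One spot in your write-up is softer than it should be: you assert that the charts $\mathfrak{U}_i^{\rig}$ can be taken to be rational subdomains of $\Ann$, invoking Patel's classification as making this ``completely transparent,'' but this is precisely the step that would need a concrete argument (e.g., that $\mathfrak{A}$ is dominated by a blow-up of the standard model and blow-ups have rational charts). For quasi-compactness this claim is irrelevant; and for the ``rational'' part of the conclusion, one can alternatively invoke Gerritzen--Grauert once quasi-compactness is established, which is essentially what the paper does implicitly as well.
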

\begin{proof}
   We may assume that $F = \Breve{F}$. Let $\mathfrak{A} = \cup_{i=1}^m U_i$ be a finite formal affine cover of $\mathfrak{A}$. It suffices to prove that the tube inside $U_i^{\rig}$ over $W\cap (U_i\otimes_{\cO_{\Breve{F}}} \overline{\F})$ is quasi-compact. We may thus replace  $\mathfrak{A}$ by $U_i$ and assume that $\mathfrak{A}$ is a formal affine admissible $\cO_{\Breve{F}}$-scheme, $\mathfrak{A} = \Spf(B)$. The special fiber $\Spec(B \otimes \overline{\F})$ is a finite type $\overline{\F}$-scheme of dimension one such that its irreducible components (with their reduced structures) are isomorphic to open subschemes of $\mathbb{A}^1_{\overline{\F}}$.
   The complement $(\mathfrak{A}\otimes \overline{\F})\setminus W$ is a finite set of closed points of $\Spec(B\otimes_{\cO_F} \overline{\F})$ and we may therefore find an element $b \in B$ such that its reduction $\overline{b} \in B\otimes \overline{\F}$ cuts out this finite set of closed points. 
   The rigid generic fiber of the principal open subset $\Spf(B\langle b^{-1}\rangle) \subseteq \Spf(B)$ given by the $\pi$-adic completion of $B[b^{-1}]$ is then exactly the tube inside $\Sp(B\otimes_{\cO_F} \Breve{F})$ over $W$, and is thus affinoid, in particular quasi-compact.
\end{proof}

\begin{corollary}\label{cor:goodbaddescrip}
    Let $\Ann$ be a closed annulus as above and let $f : \Ann \rightarrow \AgK^\an$ be a rigid-analytic map. Then the locus of points in $\Ann$ having maximal abelian rank is a finite union of closed subannuli contained in $\Ann$. 
\end{corollary}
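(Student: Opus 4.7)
The plan is to pass to a well-chosen admissible formal model of $\Ann$ and read off abelian ranks from the Baily--Borel stratification on its special fiber. Fix an admissible $\cO_F$-model $\fA$ of $\Ann$ of the shape given by \autoref{prop:specialfiber}: a chain of affine lines at the two ends and projective lines in the middle, meeting transversally at nodes. Since $\AgKbb$ is projective over $\Z_p$, its $p$-adic completion $\widehat{\AgKbb}$ is a proper admissible formal scheme, so Raynaud's extension theorem for maps into proper formal schemes implies that, after possibly refining $\fA$ by admissible formal blowups (each of which merely inserts an additional $\P^1$ at a node and preserves the shape of \autoref{prop:specialfiber}), the composition $\Ann \xrightarrow{f} \AgK^{\an}\hookrightarrow \AgKbb^{\an}$ extends to a formal morphism $\hat f : \fA \to \widehat{\AgKbb}$. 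For each irreducible component $C_i$ of $\fA\otimes\F$ with generic point $\eta_i$, let $g'_i$ be the unique integer with $\bar f(\eta_i) \in \AgKbb(g'_i)\otimes\F$ (where $\bar f$ denotes the induced map on special fibers), and set $g_{\max} := \max_i g'_i$.

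Next, the chain structure of $\fA\otimes\F$ induces a natural decomposition of $\Ann$ on each piece of which the abelian rank is constant. Concretely, the tube $\Sigma_i$ over the smooth open locus $C_i \setminus \{\text{nodes}\}$ is a closed thin subannulus of $\Ann$ isomorphic to $\Ann_0$, while the tube $\Omega_{j,j+1}$ over a node $n_{j,j+1}$ is an open annulus joining $\Sigma_j$ and $\Sigma_{j+1}$. Applying the already established thin-annulus case of \autoref{badannulusisfullybad} to $f|_{\Sigma_i}$ shows that the abelian rank is constant on $\Sigma_i$ and equals $g'_i$. Every classical point of $\Omega_{j,j+1}$ specializes to $n_{j,j+1}$, so its image in $\AgKbb \otimes \F$ is $\bar f(n_{j,j+1})$; hence the abelian rank is also constant on $\Omega_{j,j+1}$, equal to the integer $g''$ with $\bar f(n_{j,j+1})\in \AgKbb(g'')\otimes\F$, and irreducibility of $C_j$ forces $g'' \leq \min(g'_j, g'_{j+1})$.

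Combining these observations, the locus of classical points with maximal abelian rank is the union of the $\Sigma_i$ with $g'_i = g_{\max}$, together with the $\Omega_{j,j+1}$ for which $\bar f(n_{j,j+1})\in \AgKbb(g_{\max})\otimes\F$; by the inequality just noted, an $\Omega_{j,j+1}$ appears in this union only when $g'_j = g'_{j+1} = g_{\max}$. Each connected component of this union is therefore a maximal run $\Sigma_{i_j}\cup \Omega_{i_j, i_j+1}\cup \Sigma_{i_j+1}\cup \cdots\cup \Sigma_{i_j+m}$ of consecutive indices in $I := \{i : g'_i = g_{\max}\}$ with all intermediate nodes lying in $W := \bar f^{-1}(\AgKbb(g_{\max})\otimes\F)$, and such a chain of adjacent thin subannuli joined by the open annuli between them is itself a single closed subannulus of $\Ann$. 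Since $\fA\otimes\F$ has only finitely many components and nodes, only finitely many such runs occur, giving the desired decomposition. The main technical point is the compatibility of the Raynaud extension $\hat f$ with the Patel shape of \autoref{prop:specialfiber}, which is ensured by the fact that admissible blowups in this semistable setting only refine the chain by inserting further $\P^1$'s at nodes.
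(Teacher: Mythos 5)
Your approach is genuinely different from the paper's: the paper proves that the maximal-rank locus $V$ is a quasi-compact rational open via \autoref{lem:zariski-tube-quasicompact}, passes to $\C_p$, and invokes the Lipshitz--Robinson rigid-subanalytic quantifier-elimination theorem to conclude that $|V_{\C_p}|$ is a finite union of closed intervals, after which the thin-annulus case of \autoref{badannulusisfullybad} finishes the job. You instead try to read off the answer directly from a formal model. That strategy is plausible, but your proof as written has a genuine gap.

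The gap is in your reading of \autoref{prop:specialfiber}. The proposition does \emph{not} say the special fiber of $\fA$ is a chain; it says the reduced geometric special fiber is a \emph{tree} of rational curves with two $\A^1$ leaves and the rest $\P^1$'s. Your whole decomposition --- $\Sigma_i$ = tube over $C_i^\circ$ is a thin subannulus, $\Omega_{j,j+1}$ = tube over a node is an open subannulus joining consecutive thin annuli --- depends on the tree being a chain. If the tree has leaf $\P^1$'s off the main path between the two $\A^1$'s, then the tube over the smooth open locus of such a leaf is a punctured residue disk inside $\Ann$, not a thin subannulus, and the tube over the smooth open locus of the chain component it hangs off of is a thin annulus minus some residue disks, also not a thin subannulus. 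Your closing claim that ``admissible blowups in this semistable setting only refine the chain by inserting further $\P^1$'s at nodes'' is simply false: admissible blow-ups at closed subschemes supported at a smooth point of the special fiber insert a leaf $\P^1$ at that point. So the ``Patel shape'' is not preserved in the way you assert, and you cannot assume the chain hypothesis you lean on throughout.

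There is a plausible repair: any leaf component (and the sub-tree hanging off it) has its tube contained in a single thin subannulus at a fixed radius, and by the already-proved thin-annulus case of \autoref{badannulusisfullybad} the abelian rank is constant on that thin annulus. Hence the abelian rank, as a function of radius, is read entirely from the unique path in the tree between the two $\A^1$ leaves, and the run-length analysis you perform on the chain goes through after restricting attention to that path. Alternatively, one could replace $\fA$ by a semistable model dominating it and then the chain picture is literally correct. Either fix requires an argument you have not given, so at present your proof does not establish the corollary. (Your structural observations about $\bar f(\eta_i)$ landing in $\AgKbb(g_i')$ and the closure constraint $g'' \le \min(g_j', g_{j+1}')$ at nodes are fine once the chain hypothesis is secured.)
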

\begin{proof}
    Let $V \subseteq \Ann$ denote the locus of points of $\Ann$ with maximal abelian rank.
    By \autoref{lem:zariski-tube-quasicompact}, $V$ is a quasi-compact rational open of $\Ann$. 

    Passing to $\C_p$, the quasi-compact rational open $V_{\C_p} := V \hat{\otimes} \C_p \subseteq \Ann \hat{\otimes} \C_p$ is a finite union of rational subdomains, and hence is definable over $\C_p$ in the rigid-subanalytic language $L_{an}^D$ of Lipshitz--Robinson \cite{lipshitz-robinson-rigid-subanalytic}.  
    Working at the level of classical points, \cite[Theorem 3.8.2]{lipshitz-robinson-rigid-subanalytic} yields that $|V_{\C_p}| \subseteq |\C_p^\times|$ is quantifier-free definable in the divisible ordered abelian group $|\C_p^\times|$. Hence, $|V_{\C_p}|$ is a finite disjoint union of closed intervals in $|\C_p^\times|.$

    From \autoref{badannulusisfullybad}, it follows that for each $\gamma \in |\C_p^\times|$ with $|\pi|^n \leq \gamma \leq 1$, the thin subannulus of $\Ann$ (defined over an appropriate finite extension of $F$) at radius $\gamma$ is either completely contained in $V_{\C_p}$ or completely contained in its complement. 
    The corollary now follows from the conclusion of the previous paragraph.
\end{proof}

\begin{proof}[Proof of \autoref{badannulusisfullybad}]
By \autoref{cor:goodbaddescrip}, the locus of points in $\Ann$ having maximal abelian rank is a finite union of closed subannuli of $\Ann.$ Suppose that for the sake of a contradiction, this locus of points is not all of $\Ann.$ By replacing $\Ann$ by a smaller closed annulus, we may assume that the outer thin sub-annulus $\Ann_0\subset \Ann$ is the locus of points of $\Ann$ with maximal abelian rank, say $g'$. Let $\Ann_1\subset \Ann$ denote the inner thin sub-annulus, and let $\mathsf{N}$ denote the complement in $\Ann$ of $\Ann_0$ and $\Ann_1$. The map $\Ann \rightarrow \AgK$ is induced by a map of integral models $\fA \rightarrow \AgKbb$. 

Recall the initial integral model of $\Ann$, given by $\Spf \cO_F \langle s,t \rangle/(st - \pi^n )$ (for some $n$). Let $X$ denote the closed subscheme $s = 0,\pi =0$, and $Y$ denote the closed subscheme $t = 0,\pi = 0$, and let $o$ be the node $X\cap Y$. Without loss of generality, suppose that $\Ann_0$ is the tube over $X\setminus \{o\}$. Now, consider the integral model $\fA$ of $\Ann$ constructed in the appendix (\autoref{prop:specialfiber}), and the map $\beta: \fA \rightarrow \Spf \cO_F \langle s,t \rangle/( st - \pi^n )$. Recall that the special fiber of $\fA$ contains two distinguished irreducible components $L$ and $M$, both isomorphic to $\mathbb{A}^1$, which map onto $X$ and $Y$ respectively. By \autoref{anandremark}, only one point $x\in L$ (and $y\in M$) maps to the node $o\in X$. Therefore, the preimage of any point $z\in L\setminus \{x\}$ under the specialization map (via the integral model $\fA$) is a subset of $\Ann_0$. This discussion is captured by \autoref{picture}. 

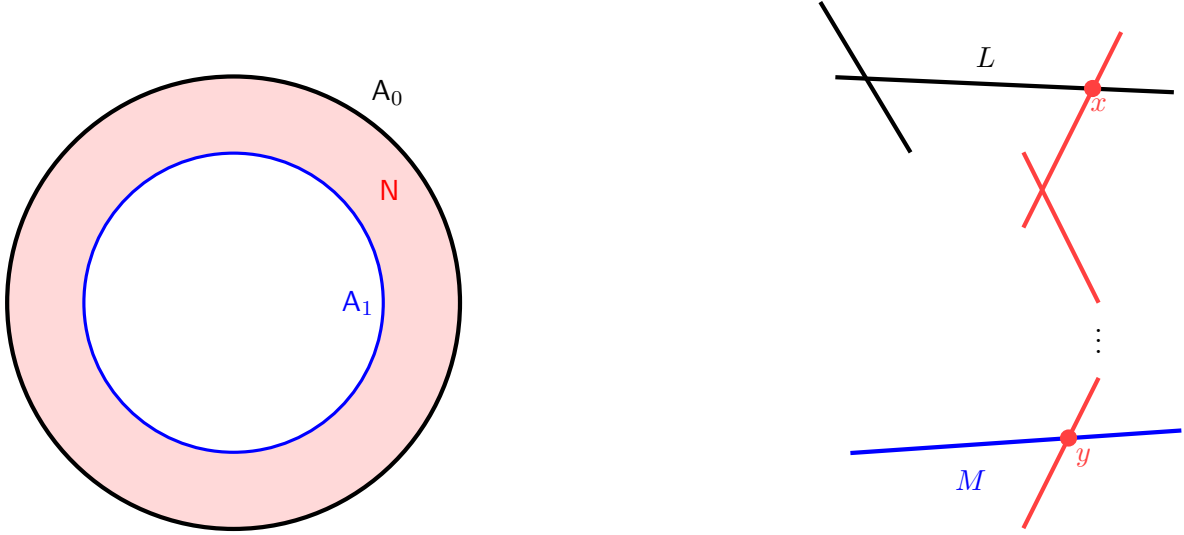
\begin{figure}

\begin{tikzpicture}

    \draw[fill=red!15,color=red!15] (-1.5,-3) circle [radius=3]; 
    \draw[fill,color=white] (-1.5,-3) circle [radius = 2]; 
   \node[right] at (.3,-1.5) {\color{red} $\mathsf{N}$};

\draw[color=black, ultra thick] (-1.5,-3) circle [radius = 3.01]; 
    \node[right] at (.2,-0.2) {\color{black} $\Ann_0$};

    \draw[color=blue, very thick] (-1.5,-3) circle [radius = 1.99]; 
    \node[left] at (.5,-3) {\color{blue}$\Ann_1$};
    
    \draw[color=black, ultra thick] (6.5,0) -- (11,-.20); 
    \node[above] at (8.5,0) {\color{black} $L$};
    \draw[color=black, ultra thick] (7.5,-1) -- (6.3,1);
    
    \draw[color=red!75, ultra thick] (10.3,0.6) -- (9,-2); 
    \draw[fill,color=red!75, thick] (9.92,-0.15) circle [radius = .1];
    \node[below] at (10,-0.15) {\color{red!75} $x$};
    
    \draw[color=red!75, ultra thick] (9,-1) -- (10,-3); 
    \node[below] at (10,-3) {$\vdots$}; 
    \draw[color = red!75, ultra thick] (10,-4) -- (9,-6);

    \draw[color = blue, ultra thick] (6.7,-5) -- (11.1,-4.7);
    \node[below] at (8.3,-5.1) {\color{blue} $M$};
    \draw[fill,color=red!75, thick] (9.6,-4.8) circle [radius = .1];
    \node[below] at (9.8,-4.8) {\color{red!75} $y$};

\end{tikzpicture}
\caption{Generic and special fibers of $\fA$}
\label{picture}
\end{figure}
With this in hand, we have that $L\setminus \{x\}$ maps to $\AgKbb(g') \otimes \F$, while the point $x$ maps to $\AgKbb(g'')\otimes \F$ for some $g'' < g'$. Recall that $\AgKbb(g')$ is canonically isomorphic to $\mathcal{A}_{g',\sK'}$, where $\sK'$ is such that there is full level $\ell$ structure. Therefore, our initial assumption that the locus of points in $\Ann$ having maximal abelian rank is not equal to all of $\Ann$ implies that there exists an algebraic map $L \setminus \{x\}\rightarrow \mathcal{A}_{g',\sK'}$ where $x$ maps to a boundary-point of $\mathcal{A}_{g',\sK'}$. By \autoref{newlemma} below, this isn't possible, and this contradicts our initial assumption that the locus of points with maximal abelian rank is not all of $\Ann$. The theorem follows.

\end{proof}

\begin{lemma}\label{newlemma}
    Let $g\geq 1$ be any integer. 
    Let $\overline{\AgK}$ be a separated scheme of finite type containing $\AgK\otimes\mathbb F_p$ as an open dense subset, where $\AgK$ has full level-$\ell$ structure. Let $L=\mathbb{A}^1$ and let $L \to \overline{\AgK}$ be a morphism over $\Fpbar$ such that the preimage of $\AgK$ contains $\Gm$. Then the image of $L$ is contained in $\AgK$.
\end{lemma}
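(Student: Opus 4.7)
The plan is to reduce the assertion to a good-reduction statement at the single potentially bad point $0 \in L$, and then to conclude via separatedness of $\overline{\AgK}$. Since $f^{-1}(\AgK)$ is a nonempty open subset of $L = \mathbb{A}^1$ containing $\Gm$, it suffices to show that $f(0) \in \AgK$.

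Set $R = \Fpbar[[t]]$ with fraction field $K$, where $t$ is a uniformizer at $0 \in L$, and consider the induced map $\iota: \Spec R \to L \xrightarrow{f} \overline{\AgK}$. Its restriction to the generic fiber $\Spec K$ factors through $\AgK$ and classifies a polarized abelian variety $A_K/K$ equipped with prime-to-$p$ level structure (in particular, a full level-$\ell$ structure) and the appropriate parahoric $p$-structure. Because the full level-$\ell$ structure provides a basis of $A_K[\ell](K)$, the group scheme $A_K[\ell]$ is the constant étale $K$-group scheme, and hence unramified over $R$. Since $\ell \geq 3$ is coprime to the residue characteristic $p$, Raynaud's sharpening of the N\'eron--Ogg--Shafarevich criterion then yields that $A_K$ has good reduction, extending to an abelian scheme $A_R/R$.

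All additional data extends uniquely to $R$: the polarization $\lambda_K$ extends to a polarization $\lambda_R$ on $A_R$ because homomorphisms of abelian schemes over a Dedekind domain are determined by their generic fibers, and the kernel of $\lambda_R$ lies inside $A_R[p]$ by flat closure from the generic fiber; the prime-to-$p$ level structure extends uniquely because $A_R[\ell]$ is étale over the strictly Henselian ring $R$, and so constant. Using that torsion-freeness of the level makes $\AgK$ a fine moduli space, this data produces a morphism $g: \Spec R \to \AgK \hookrightarrow \overline{\AgK}$ whose restriction to $\Spec K$ agrees with that of $\iota$.

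Since $\overline{\AgK}$ is separated and $\Spec K$ is scheme-theoretically dense in $\Spec R$, the two morphisms $\iota, g: \Spec R \rightrightarrows \overline{\AgK}$ must coincide. In particular, the image of the closed point of $\Spec R$ under $\iota$ lies in $\AgK$, i.e., $f(0) \in \AgK$, which completes the proof. The main technical ingredient is Raynaud's criterion of good reduction; the remaining steps are standard manipulations with N\'eron models, flat closure, and moduli functors, and are where the torsion-free level structure is crucially used.
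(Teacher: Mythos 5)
The proposal contains a genuine gap at the crucial step. You write that because $A_K[\ell]$ is the constant \'etale group scheme, ``Raynaud's sharpening of the N\'eron--Ogg--Shafarevich criterion then yields that $A_K$ has good reduction.'' This is not what Raynaud's criterion says: having $A_K[\ell]$ unramified for a single $\ell \geq 3$ coprime to $p$ only yields that $A_K$ has \emph{semistable} reduction (unipotent inertia action on $T_\ell$), not good reduction. Good reduction via N\'eron--Ogg--Shafarevich requires the entire Tate module $T_\ell(A_K)$, not just the $\ell$-torsion, to be unramified. Indeed your conclusion is false as a local statement: over $K = \Fpbar((t))$ with $\ell \neq p$, $\ell \geq 3$, the Tate elliptic curve $E_q$ with $q = t^{\ell}$ has $E_q[\ell]$ constant over $K$ (both $\mu_\ell$ and $q^{1/\ell} = t$ lie in $K$), yet $E_q$ has multiplicative reduction. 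So no purely local argument over $\Spec R$ — which is what you give — can close the proof, because the hypothesis ``constant $\ell$-torsion over $K$'' does not single out abelian varieties with good reduction.

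The actual proof must exploit the fact that the abelian scheme is defined over all of $\Gm$, not merely near $0$. The paper's route is: full level-$\ell$ structure gives semistable reduction at $0$, hence unipotent local monodromy; but the prime-to-$p$ geometric fundamental group of $\Gm$ is procyclic (hence abelian), and the $\ell$-adic monodromy of an abelian scheme over a curve over a finite field is semisimple, so the Zariski closure of the geometric monodromy group is diagonalizable; a unipotent element of a diagonalizable group is the identity, whence the local monodromy at $0$ is trivial and good reduction follows from N\'eron--Ogg--Shafarevich. The semisimplicity input — coming from the global algebraic family — is exactly what your local argument is missing, and is why the Tate curve example over the formal punctured disc is not a counterexample to the lemma itself. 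You would need to replace your Raynaud step with this global monodromy argument (or an equivalent one) to obtain a correct proof.
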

\begin{proof}
    The map $L \rightarrow \overline{\AgK}$ must descend to $\F_q$ for some $q$, and so we replace $\Fpbar$ by $\F_q$. The moduli space $\AgK$ is fine, as we have full level-$\ell$ structure. Pull back the universal abelian scheme to $\Gm$. The abelian scheme has semistable reduction at 0, again because we have full level-$\ell$ structure. Therefore, the abelian scheme must have unipotent $\ell$-adic monodromy around 0. 

    The prime-to-$p$ fundamental group of $\Gm$ is solvable. The $\ell$-adic monodromy of the abelian scheme is semisimple, and therefore must be toric. The monodromy around 0, which is both unipotent and toric, therefore must be trivial. It follows that the abelian scheme has good reduction at 0. Therefore, 0 has to map to $\AgK \subset \overline{\AgK}$, as required. 
\end{proof}

\subsection{Reduction to the case of \texorpdfstring{$\AgK^{\good}$}{AgKgood}}
Let $\AgKtor$ denote a toroidal compactification associated to an appropriately fine cone decomposition. There is a canonical map $\phi: \AgKtor \rightarrow \AgKbb$. Recall that $\AgKbb(g') \subset \AgKbb$ (over $\mathbb Z_p$) is the boundary stratum in the Baily-Borel compactification which parameterizes $g'$-dimensional abelian varieties, where $0\leq g' < g$. 
Recall that $\AgKbb(g')$ is isomorphic to the moduli space $\mathcal{A}_{g',\mathsf{K'}}$ of $r$-dimensional abelian varieties with $\mathsf{K'}$ such that there is full level $\ell$ structure.  Let $T_{g'} \subset \AgKtor$ denote the pre-image of $\AgKbb(g')$ under $\phi$. Consider $(\AgKtor)^{T_{g'}}$, the formal completion of $\AgKtor$ along $T_{g'} \times \F$. 

There is a canonical morphism (see for instance \cite[Section 2]{KaiWen}, and also \cite[Theorem 2.5.9]{CaraianiScholze}) of formal schemes $\lambda: (\AgKtor)^{T_{g'}} \rightarrow \widehat{\mathcal{A}}_{g',\mathsf{K'}}$, where $\widehat{\mathcal{A}}_{g',\mathsf{K'}}$ denotes the $p$-adic formal completion of $\mathcal{A}_{g',\mathsf{K'}}$. The canonicity of this morphism yields that the following diagram commutes.

    \begin{equation}\label{badreddiagram}
    \begin{tikzcd}
        (\AgKtor)^{T_g'} \arrow[r, "\lambda"] \arrow[d, "\textrm{sp}"] & \widehat{\mathcal{A}}_{g',\mathsf{K'}} \arrow[d, "\textrm{sp}"] \\
        T_{g'} \otimes \F  \arrow[r, "\phi"] & \mathcal{A}_{g',\mathsf{K}'}\otimes \F
    \end{tikzcd}
    \end{equation}
Here, $\textrm{sp}$ is the specialization map. (Note that $\lambda$ is not the formal completion of the map $\phi$.)
\begin{remark}
    Note that every classical $F$-valued point $x$ in the rigid generic fiber of $(\AgKtor)^{T_{g'}}$ gives a semiabelian scheme over $\cO_F$. The generic fibre of this semi-abelian scheme is analytically uniformized by a semi-abelian variety over $F$, whose abelian part is $g'$-dimensional and has good reduction mod $\pi$. Therefore, canonically associated to $x$ is a $g'$-dimensional abelian variety with good reduction. It is this fact that induces the map $(\AgKtor)^{T_{g'}} \rightarrow \widehat{\mathcal{A}}_{g',\mathsf{K'}}$
\end{remark}

We are now ready to prove the main result of this section. 

\begin{proof}[Proof of \autoref{thm:reducetocaseofgoodred}]
     By \autoref{badannulusisfullybad}, we have that the image of $f$ is contained in the tube over $B_r$ for some Baily-Borel stratum $B_r$. By using \autoref{prop:reducetoAgpar}, it suffices to prove (under the hypothesis of \autoref{thm:reducetocaseofgoodred}) that the map extends after replacing $\Dab$ by $\Dstar_{\epsilon} \times \sD_{\epsilon}^b$ (for some $\epsilon > 0$). Therefore, by the Riemann extension theorem, it suffices to prove that $f(\Dstar_{\epsilon} \times \sD_{\epsilon}^b)$ is contained in the tube over a closed point of $\AgKbb(g')(\Fpbar)$. 
    
    As the image of $f$ avoids the boundary in characteristic 0, we may consider $f$ to be a map $\Dab \rightarrow \AgKtor$ -- note that the image of $f$ must be contained in the rigid generic fiber of $(\AgKtor)^{T_{g'}}$ by \autoref{badisfullybad}. By the commutativity of Diagram \ref{badreddiagram}, it suffices to prove that the map $\textrm{sp}\circ \lambda \circ f$ is constant (after replacing $\Dab$ by ${\Dstar}_{\epsilon}^a \times \sD_{\epsilon}^b$ for some $\epsilon > 0$). However, our hypothesis is that the extension theorem is for $\AgK^{\good}$, and therefore is true for $\mathcal{A}_{g',\sK'}$ for every $g'< g$. Therefore, the map $\lambda \circ f$ extends to a map from $\sD^{a+b}\rightarrow \mathcal{A}_{g',\sf{K}'}$. Therefore, after replacing $\Dab$ by $\Dstar_{\epsilon} \times \sD_{\epsilon}^b$, we have that the image of $\lambda\circ f$, and therefore of $f$, must be contained in the residue space over some closed $x\in \AgKbb(\Fpbar)$. The theorem follows.
 
\end{proof}

\section{Lifts to Rapoport--Zink spaces}\label{sec:lifttoRZ}

The main result of this section is the following theorem. 
\begin{theorem}\label{thm:maptoRZ}
Let $f: \Dab \rightarrow (\AgK \otimes_{\Z_p} F)^{\an}$ be a rigid-analytic map over $F$ such that the image of $f$ is contained in $\AgK^{\good}$. Then, there exists a lift  $\tilde{f}_{\Breve{F}}: \Dab \rightarrow \RZ_x^{\rig}$ of $f_{\Breve{F}}$, where $x \in \AgK(\Fpbar)$ is some fixed point whose residue disk has non-empty intersection with $f(\Dab)$.
\end{theorem}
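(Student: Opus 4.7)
The plan is to construct $\tilde{f}_{\Breve{F}}$ directly from Rapoport--Zink's moduli description of $\RZ_x$: a morphism from a formal $\cO_{\Breve{F}}$-scheme $\fA$ to $\RZ_x$ is the datum of a polarized formal $p$-divisible group $(\pdiv, \lambda)$ on $\fA$ together with a polarization-respecting quasi-isogeny $\rho : \pdiv_x \times \fA_{\Fpbar} \to \pdiv \times_{\fA} \fA_{\Fpbar}$. I would start by fixing a classical point $y_0 \in \Dab(\Breve{F}')$ for some finite extension $\Breve{F}'/\Breve{F}$ and taking $x$ to be the reduction of $f(y_0)$ in $\AgK(\Fpbar)$, so that $\pdiv_x$ is tautologically identified with the reduction mod $\pi$ of $\abvar_{f(y_0)}[p^\infty]$. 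The goal is then to build a formal model $\fA$ of $\Dab_{\Breve{F}}$, a morphism $\widehat{f} : \fA \to \widehat{\AgK}$ whose rigid generic fibre is $f_{\Breve{F}}$, and a quasi-isogeny $\rho$ as above for the pulled-back polarized $p$-divisible group; the resulting triple $(\pdiv, \lambda, \rho)$ will yield $\tilde{f}_{\Breve{F}}$ by passage to rigid generic fibres.

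For the formal model, I would take $\fA = \fA_0 \times \Spf \cO_{\Breve{F}}\langle T_1, \ldots, T_b \rangle$, where $\fA_0$ is an admissible formal model of a sufficiently large closed subannulus of $\Dstar$ furnished by Patel's \autoref{prop:specialfiber}. The essential feature of Patel's classification is that $\fA_{0,\Fpbar}$ is a chain of irreducible components each isomorphic to an open subscheme of $\mathbb{A}^1_{\Fpbar}$, glued pairwise at closed points. Consequently, $\fA_{\Fpbar}$ is a union of smooth connected components, each an open subset of $\mathbb{A}^{1+b}_{\Fpbar}$, glued along closed subschemes.

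The heart of the argument is the construction of $\rho$. Starting from the tautological isomorphism $\pdiv_x \cong \pdiv_{z_0}$ at the specialization $z_0 \in \fA_{\Fpbar}$ of $y_0$, I would propagate it across $\fA_{\Fpbar}$ component by component. On each smooth connected component, one must first verify that the Newton polygon of $(\pdiv, \lambda)$ is constant: in the $\fA_0$-direction this follows from \autoref{badannulusisfullybad} together with Patel's chain structure, while in the polydisc directions one exploits that $\sD^b$ has trivial \'etale $\pi_1$, so the pulled-back $\ell$-adic Tate module is a constant local system and thereby enforces constancy of Frobenius traces (hence of the Newton polygon) at all classical specializations. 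On a smooth connected stratum with constant Newton polygon, a polarization-respecting quasi-isogeny extends uniquely from a single point to the whole stratum via Katz's slope filtration and the rigidity of quasi-isogenies of $p$-divisible groups over smooth connected bases in characteristic $p$. The extensions over adjacent components agree automatically on the (nonempty) closed nodes at which they are glued, by the same uniqueness, and thus assemble into a global $\rho$ on $\fA_{\Fpbar}$.

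The main obstacle is the Newton polygon constancy in the polydisc direction (which is not directly handled by Patel's appendix) and the clean propagation of the quasi-isogeny across the nodes of Patel's chain; both amount to careful applications of \autoref{badannulusisfullybad}, Patel's appendix, and characteristic-$p$ rigidity for isoclinic $p$-divisible groups. Once $\rho$ is constructed, the assembly of $(\pdiv, \lambda, \rho)$ into a morphism $\fA \to \RZ_x$, and the identification of its rigid generic fibre with the sought lift, is a formal consequence of the moduli interpretation of $\RZ_x$.
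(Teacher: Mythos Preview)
Your proposal has a genuine gap at its core. The claim that ``on a smooth connected stratum with constant Newton polygon, a polarization-respecting quasi-isogeny extends uniquely from a single point to the whole stratum via Katz's slope filtration and the rigidity of quasi-isogenies'' is not correct. Drinfeld rigidity extends quasi-isogenies over \emph{nilpotent thickenings}, not over smooth positive-dimensional bases; and constancy of the Newton polygon on, say, $\A^{1+b}_{\Fpbar}$ does not force the family of $p$-divisible groups to be quasi-isogenous to a constant one (an ordinary family already shows this). The paper's argument is purely $\ell$-adic here: for $C = \A^1_{\Fpbar}$ or $\P^1_{\Fpbar}$ the prime-to-$p$ geometric $\pi_1$ is trivial, and the full level-$\ell$ structure makes $\mathscr{A}[\ell]$ constant, so the $\ell$-adic Tate module of $\mathscr{A}/C$ has trivial monodromy. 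One then invokes \cite[Proposition~4.4]{Grothabelian} to conclude that $\mathscr{A}$ is $p$-power isogenous to the constant family $\mathscr{A}_0 \times C$; from this it follows formally that any polarized $p$-power quasi-isogeny at one point propagates over the whole tree of rational curves. Note also that \autoref{badannulusisfullybad} concerns constancy of the \emph{abelian rank}, not of the Newton polygon, so it does not give you what you claim in the annulus direction.

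There is a second structural issue in your treatment of the $b>0$ case. Patel's \autoref{prop:specialfiber} produces, for a given map out of a closed annulus, a formal model \emph{depending on that map} through which it factors. Taking the product $\fA_0 \times \Spf\,\cO_{\Breve{F}}\langle T_1,\ldots,T_b\rangle$ gives a formal model of $\Ann \times \sD^b$, but you have not justified that the given $f$ extends to \emph{this} formal model; in general it will not without further blow-ups whose special fibre you then no longer control. The paper sidesteps this entirely: it first runs the formal-model/quasi-isogeny argument only for $\Dstar$ (as an increasing union of closed annuli, with compatibility forced by the topological-cover property of $j$), and then for $\Dab$ observes that the image already lands in the tube $\mathfrak{T}^\circ$, that $j : \RZ_x^{\rig} \to \mathfrak{T}^\circ$ is a topological cover of Berkovich spaces, and that the Berkovich space of $\Dab$ is contractible (its skeleton is). The lift $\tilde{f}$ then exists for purely topological reasons, with no higher-dimensional formal model needed.
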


We recall that we have full $\ell$-structure for some odd prime $\ell \neq p$, and parahoric level structure at $p$. Our method of proof will be to first prove the one-dimensional case, i.e. for maps from $\Dstar$, and then to deduce the general case.  

\subsection{Preliminaries on abelian varieties}

Let $C$ denote a connected algebraic curve over $\overline{\F}_p$, whose geometric, irreducible components (with their reduced induced structure) are isomorphic to either $\PP^1$ or $\A^1$. Further, we assume that the intersection graph of $C$ is a tree. Let $\mathscr{A}$ denote an abelian scheme over $C$ endowed with a polarization $\lambda_{\mathscr{A}} : \mathscr{A} \rightarrow \mathscr{A}^\vee$. Let $x_0 \in C(\overline{\F}_p)$ be a fixed point, and let $\mathscr{A}_0$ denote the fiber of $\mathscr{A}$ at $x_0$. We also assume that there is an isomorphism $\mathscr{A}[\ell] \simeq (\Z/\ell \Z)^{2g}$. 
\begin{definition}\label{ppowerquasiisogeny}
Let $\mathscr{B}, \mathscr{B}'$ be abelian schemes over a base scheme $S$ that is of finite type over $\overline{\mathbb{F}}_p$. 
\begin{enumerate}
    \item We say that an element
$\rho \in \Hom_{\mathrm{Ab.Sch}/S}(\mathscr{B}, \mathscr{B}')\otimes_\Z \Q$,
is a \emph{$p$-power quasi-isogeny over $S$} if there exists some integer $n$ such that $p^n \rho : \mathscr{B} \rightarrow \mathscr{B}'$ is a $p$-power isogeny over $S$.
    \item Suppose $\lambda$ (resp. $\lambda'$) denotes a polarization on the $S$-abelian scheme $\mathscr{B}$ (resp. $\mathscr{B}'$), then we say that a $p$-power quasi isogeny $\rho$ from $\mathscr{B}$ to $\mathscr{B}'$ is \emph{a polarized $p$-power quasi-isogeny over $S$} if there is an integer $n$ such that $\rho^\ast(\lambda') = n \lambda.$
\end{enumerate}
\end{definition}

\begin{proposition}\label{prop:isogenyovertreeofcurves}
Maintain the above setting, and suppose that $B$ is an abelian variety over $\overline{\F}_p$, endowed with a polarization $\lambda_B : B \rightarrow B^\vee$. Let $j_0:  B \rightarrow \mathscr{A}_{x_0}$ denote any polarized $p$-power quasi-isogeny (i.e. $j_0^*(\lambda_{\mathscr{A}_{x_0}}) = n \lambda_{B}$ for some $n\in \Z$). Then, there exists a unique polarized $p$-power quasi-isogeny $j: B\times_{\Fpbar} C \rightarrow \abvar$, which specializes to $j_0$. 
\end{proposition}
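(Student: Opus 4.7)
\emph{Uniqueness} is immediate from the rigidity of quasi-isogenies of abelian schemes: if $j, j' : B \times C \to \mathscr{A}$ are two polarized $p$-power quasi-isogenies specializing to $j_0$ at $x_0$, then $j \circ (j')^{-1}$ is a self quasi-isogeny of $B \times C$ equal to $\mathrm{id}$ at $x_0$, and since quasi-isogenies between abelian schemes over a reduced connected base are determined by their value at a single geometric point, this forces $j = j'$ globally.

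For \emph{existence}, I would induct on the number of irreducible components of $C$, exploiting its tree structure. The inductive step attaches one leaf at a time: write $C = C' \cup L$ with $L \cong \mathbb{P}^1$ or $\mathbb{A}^1$ meeting the subtree $C'$ in a single point $P$, and assume inductively that a polarized $p$-power quasi-isogeny $j'$ has been constructed over $C'$ extending $j_0$. I would then take its specialization $j'|_P : B \to \mathscr{A}_P$ as new initial data on $L$, produce an extension $j_L$ over $L$ by the base case below, and glue $j'$ to $j_L$ (using the uniqueness at $P$) to obtain the desired quasi-isogeny on $C' \cup L$. This reduces the problem to the case when $C = L$ is a single $\mathbb{P}^1$ or $\mathbb{A}^1$ over $\Fpbar$ with a quasi-isogeny $j_P$ at one point.

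For this \emph{base case}, composing with $j_P$ reduces me to the situation $B = \mathscr{A}_P$ and $j_P = \mathrm{id}$, where I seek a polarized $p$-power quasi-isogeny $\mathscr{A}_P \times L \to \mathscr{A}|_L$ specializing to $\mathrm{id}$ at $P$. I would consider the ind-scheme $\mathcal{Q} = \bigsqcup_n \mathcal{Q}_n$ over $L$ representing the functor of such quasi-isogenies of bounded degree, each $\mathcal{Q}_n$ being a finite-type $L$-scheme cut out of an appropriate Hilbert-type parameter scheme of finite flat subgroups of $\mathscr{A}_P \times L$. By the rigidity of quasi-isogenies of $p$-divisible groups (Drinfeld; cf.\ \cite{RZ}), each $\mathcal{Q}_n \to L$ is formally étale, and being of finite type, étale. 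The prime-to-$p$ content is taken care of by the hypothesis $\mathscr{A}[\ell] \cong (\mathbb{Z}/\ell\mathbb{Z})^{2g}$ combined with $\pi_1^{(p')}(L) = 1$ (for $L = \mathbb{P}^1$ or $\mathbb{A}^1$ over $\Fpbar$), which forces the prime-to-$p$ Tate module to be canonically constant over $L$. For $L = \mathbb{P}^1$ every étale cover is trivial over $\Fpbar$, so the identity section at $P$ extends uniquely over all of $L$.

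The \emph{main obstacle} lies in the $L = \mathbb{A}^1$ sub-case, since $\pi_1(\mathbb{A}^1_{\Fpbar})$ is a nontrivial pro-$p$ group and étale covers need not split. To conclude, one must show that the connected component of the relevant $\mathcal{Q}_n$ through the identity at $P$ projects isomorphically onto $L$. I would attack this by a direct Dieudonn\'e-theoretic analysis of the $p$-divisible group $\mathscr{A}|_L[p^\infty]$, exploiting the constancy of its Newton polygon over $L$ — which itself must be separately verified from the tree-of-lines structure of $C$, together with Grothendieck's specialization theorem and the fact that $\mathscr{A}_P$ already lies in the Newton stratum of $B$ — or alternatively by invoking known structure theorems for the reduced special fiber of Rapoport--Zink spaces to identify the component in question.
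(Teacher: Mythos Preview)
Your overall architecture (uniqueness via rigidity, induction along the tree reducing to a single $\mathbb{P}^1$ or $\mathbb{A}^1$) matches the paper's. The gap is in your base case for $L=\mathbb{A}^1$: you correctly isolate the obstacle (the wild $p$-part of $\pi_1(\mathbb{A}^1_{\Fpbar})$ prevents an \'etale cover of $L$ from splitting) but do not overcome it. The suggested ``Dieudonn\'e-theoretic analysis'' or appeal to the structure of reduced Rapoport--Zink fibers is left as a sketch, and neither is straightforward: constancy of the Newton polygon on $L$ does \emph{not} by itself produce a quasi-isogeny to the constant family, and invoking the geometry of $\RZ$ here risks circularity with the paper's later use of the proposition.

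The paper's argument avoids this $p$-adic difficulty entirely by passing through the $\ell$-adic side. You already observe that the prime-to-$p$ Tate module is constant on $L$ (from $\mathscr{A}[\ell]\cong(\Z/\ell\Z)^{2g}$ and $\pi_1^{(p')}(L)=1$), but you only use this to control the prime-to-$p$ part of your quasi-isogeny scheme. The key point you are missing is that this $\ell$-adic constancy is already enough: by Grothendieck's theorem on homomorphisms of abelian schemes (\cite[Proposition~4.4]{Grothabelian}), triviality of the $\ell$-adic monodromy forces $\mathscr{A}|_L$ to be $p$-power isogenous to the constant family $\mathscr{A}_P\times L$. Once you have some $p$-power isogeny $i:\mathscr{A}\to \mathscr{A}_P\times L$, post-composition with $i$ embeds $\Hom(B\times L,\mathscr{A})$ into $\End(B\times L)=\End(B)$ with $p$-power index, and the specialization map $\Hom(B\times L,\mathscr{A})\to\Hom(B,\mathscr{A}_P)$ is seen to have $p$-power index as well; this yields the desired extension without ever having to split an \'etale cover of $\mathbb{A}^1$. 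The polarization compatibility is then a separate, easy rigidity check.
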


\begin{lemma}\label{TateA1}
Let $C$ be either $\A^1$ or $\P^1$, and let $\phi: C\rightarrow \AgK \otimes \Fpbar$ denote some map defined over $\Fpbar$. Let $\mathscr{A}/C$ denote the abelian scheme pulled back from $\AgK$. Then, $\mathscr{A}/C$ is $p$-power isogenous to the constant family $\mathscr{A}_0\times C$, where $\mathscr{A}_0$ is the fiber of $\mathscr{A}$ over some point $x_{0} \in C(\Fpbar)$. 
\end{lemma}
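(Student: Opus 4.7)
The plan is to deduce this lemma as a direct specialization of Proposition~\ref{prop:isogenyovertreeofcurves}. I will take the comparison abelian variety in that proposition to be $B := \mathscr{A}_0$ and the input quasi-isogeny $j_0$ at the basepoint to be $\mathrm{id}_{\mathscr{A}_0}$; everything then reduces to verifying the hypotheses of the proposition in our present setting.

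The curve $C$ is a single $\mathbb{A}^1$ or $\mathbb{P}^1$, so it is trivially a connected curve whose geometric irreducible components are of the required form and whose intersection graph is a (one-vertex) tree. The family $\mathscr{A}/C$ inherits its polarization by pullback from the universal polarization on $\Aguni \to \AgK$. Our standing assumption that $\AgK$ carries full level-$\ell$ structure for an odd prime $\ell \neq p$ forces $\mathscr{A}[\ell] \simeq (\mathbb{Z}/\ell\mathbb{Z})^{2g}$, so the $\ell$-torsion condition in Proposition~\ref{prop:isogenyovertreeofcurves} holds. Finally, $\mathrm{id}_{\mathscr{A}_0}$ is tautologically a polarized $p$-power quasi-isogeny, with compatibility scalar $n = 1$.

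Applying Proposition~\ref{prop:isogenyovertreeofcurves} will then produce a polarized $p$-power quasi-isogeny $j: \mathscr{A}_0 \times_{\Fpbar} C \to \mathscr{A}$ specializing to $\mathrm{id}_{\mathscr{A}_0}$ at $x_0$. Clearing denominators as in Definition~\ref{ppowerquasiisogeny}, i.e.\ choosing $N \geq 0$ so that $p^N j$ is an honest $p$-power isogeny, will yield the claimed $p$-power isogeny between the constant family $\mathscr{A}_0 \times C$ and $\mathscr{A}$. I do not expect any real obstacle in this deduction: all of the geometric content lives in Proposition~\ref{prop:isogenyovertreeofcurves} itself (where an inductive construction along the tree of components, anchored at the identity at $x_0$, is what does the work), and Lemma~\ref{TateA1} is essentially the one-component case of that proposition with the comparison abelian variety tautologically chosen to be the fiber at the basepoint.
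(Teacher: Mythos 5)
Your deduction is circular. Proposition~\ref{prop:isogenyovertreeofcurves} is proved by invoking Lemma~\ref{lem:isogovercurve}, and Lemma~\ref{lem:isogovercurve} in turn cites Lemma~\ref{TateA1} explicitly to produce the $p$-power isogeny $i : \abvar \to B \times_{\Fpbar} C$ that drives its whole argument (it needs $\abvar$ to be $p$-power isogenous to the \emph{constant} family $\abvar_0 \times C$ before anything else can happen). In other words, the logical chain in the paper runs
\[
\text{Lemma~\ref{TateA1}} \Longrightarrow \text{Lemma~\ref{lem:isogovercurve}} \Longrightarrow \text{Proposition~\ref{prop:isogenyovertreeofcurves}},
\]
so you cannot turn around and use the proposition to prove the lemma. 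Specializing $B := \mathscr{A}_0$ and $j_0 := \mathrm{id}$ merely recovers the proposition at exactly the point where its own proof would have needed Lemma~\ref{TateA1} as input.

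What is actually needed here is an independent argument, and the content is genuinely nontrivial: one must show that the universal family over $C$ is $p$-power isogenous to a constant family, not merely extend a given quasi-isogeny. The paper's proof does this by reducing, via Grothendieck's ``isogenies and finiteness'' result (\cite[Proposition 4.4]{Grothabelian}), to showing the $\ell$-adic Tate module local system on $C$ has trivial geometric monodromy. For $C = \PP^1_{\Fpbar}$ this is immediate since $\pi_1^{\mathrm{geom}}(\PP^1) = 1$; for $C = \A^1_{\Fpbar}$ the prime-to-$p$ part of $\pi_1^{\mathrm{geom}}$ is trivial, and the full level-$\ell$ structure (giving constant $\mathscr{A}[\ell]$) forces the monodromy representation into a pro-$\ell$ congruence subgroup of $\GL_{2g}(\Z_\ell)$, hence into the prime-to-$p$ quotient, hence trivial. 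You should supply an argument of this sort rather than appealing downstream to Proposition~\ref{prop:isogenyovertreeofcurves}.
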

\begin{proof}
By \cite[Proposition 4.4]{Grothabelian}, it suffices to prove that the family of $\ell$-adic Tate modules on $C$ has trivial (geometric) monodromy. This is immediate for $C= \PP^1$, as $\PP^1$ has trivial  geometric \'etale fundamental group. Suppose that $C = \A^1$. In this case, the prime-to-$p$ geometric \'etale fundamental group is trivial. We also have that $\mathscr{A}[\ell]$ is trivial by assumption. Therefore, the family of $\ell$-adic Tate modules on $C$ has trivial monodromy, and the lemma follows.

\end{proof}

\begin{lemma}\label{lem:isogovercurve}
Let $C$ be either $\A^1$ or $\P^1$, $\abvar/C$ a family of polarized abelian schemes with $\abvar[\ell] \simeq \Z/(\ell \Z)^{2g} \times C$, and $x_0\in C$. Then, any polarized $p$-power quasi-isogeny $j_0:  B \rightarrow \abvar_0$ extends uniquely to a polarized $p$-power quasi-isogeny $j:  B \times_{\Fpbar} C \rightarrow \abvar$. 
\end{lemma}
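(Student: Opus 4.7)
The plan is to reduce the statement to \autoref{TateA1} and then invoke rigidity of morphisms between abelian schemes over a connected base.

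First, I would apply \autoref{TateA1} to obtain a $p$-power quasi-isogeny $\phi : \mathscr{A}_0 \times_{\Fpbar} C \to \mathscr{A}$. Its restriction $\phi_{x_0}$ at the basepoint is a $p$-power quasi-autoisogeny of $\mathscr{A}_0$, and replacing $\phi$ by $\phi \circ (\phi_{x_0}^{-1} \times \mathrm{id}_C)$ allows me to assume that $\phi|_{x_0} = \mathrm{id}_{\mathscr{A}_0}$. I would then take as the candidate lift
\[
j := \phi \circ (j_0 \times \mathrm{id}_C) : B \times_{\Fpbar} C \longrightarrow \mathscr{A},
\]
which by construction is a $p$-power quasi-isogeny restricting to $j_0$ at $x_0$.

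Next, I would verify that $j$ is polarized, i.e.\ that $j^{\ast}(\lambda_\mathscr{A}) = n \lambda_{B\times C}$ for the integer $n$ satisfying $j_0^{\ast}(\lambda_{\mathscr{A}_0}) = n \lambda_B$. Both $j^{\ast}(\lambda_\mathscr{A})$ and $n \lambda_{B\times C}$ are quasi-homomorphisms $B \times C \to (B\times C)^\vee$, and by our normalization $\phi|_{x_0} = \mathrm{id}$ they agree on the fiber over $x_0$. After clearing $p$-power denominators they become honest homomorphisms of abelian schemes over $C$; since $C$ is smooth and connected, the functor $\underline{\Hom}_{C}(B \times C,\,(B\times C)^\vee)$ is represented by a disjoint union of $C$-schemes that are étale over $C$ (in particular unramified and separated), and therefore two sections agreeing at a single geometric point must agree globally. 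This gives the desired polarization identity.

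Finally, uniqueness would follow from the same rigidity principle: if $j'$ is another polarized $p$-power quasi-isogeny extending $j_0$, then $j^{-1} \circ j'$ is a quasi-automorphism of $B \times C$ whose restriction at $x_0$ is the identity, hence it must equal the identity globally. The only real subtlety is justifying the rigidity claim for quasi-homomorphisms, but after clearing $p$-power denominators this is exactly the classical representability of $\underline{\Hom}$ between abelian schemes over a smooth connected base by an étale group scheme; everything else is assembly.
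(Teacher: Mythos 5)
Your proposal is correct, and it takes a genuinely different route from the paper. The paper does not directly construct the lift: instead it proves that the restriction map $\Hom(B\times C,\abvar)\to\Hom(B,\abvar_0)$ has $p$-power index cokernel, by post-composing with the $p$-power isogeny $i:\abvar\to B\times C$ (built from \autoref{TateA1} together with the quasi-isogeny $B\to\abvar_0$) to reduce to the identity $\End(B)=\End(B\times C)$, which it pulls from Faltings--Chai; the existence of the lifted quasi-isogeny then falls out of the index statement after clearing $p$-power denominators. You instead normalize the trivializing quasi-isogeny $\phi$ from \autoref{TateA1} so that $\phi|_{x_0}=\mathrm{id}$ and write the lift explicitly as $j=\phi\circ(j_0\times\mathrm{id}_C)$; this bypasses the cokernel argument and the endomorphism-ring comparison entirely. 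Your explicit construction is a bit cleaner, while the paper's index computation is arguably a more natural statement if one also wants to control which integral homomorphisms lift. Both proofs ultimately rest on the same two ingredients: \autoref{TateA1} (constancy of $\abvar$ up to $p$-power isogeny) and rigidity of homomorphisms of abelian schemes over a connected normal base, which you phrase via unramifiedness of the Hom functor and the paper phrases as injectivity of the specialization map — these are the same fact, though your parenthetical ``étale'' should really be ``unramified.'' One small point worth making explicit: \autoref{TateA1} is stated for the pullback of the universal family from $\AgK$, whereas \autoref{lem:isogovercurve} only posits $\abvar[\ell]\simeq(\Z/\ell\Z)^{2g}\times C$; the proof of \autoref{TateA1} uses nothing beyond this hypothesis, so the invocation is legitimate, but it is worth flagging since you (and the paper) apply it in the more general setting without comment.
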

\begin{proof}
 It suffices to prove that the restriction map $\Hom(B\times_{\Fpbar} C,\abvar)\to \Hom(B,\abvar_0)$ has $p$-power index. We claim that there exists a $p$-power isogeny $i: \abvar\rightarrow B\times_{\Fpbar} C$. Indeed, by assumption there exists a $p$-power isogeny $B\rightarrow \abvar_0$ and therefore $\abvar_0 \rightarrow B$, and by \autoref{TateA1} there exists a $p$-power isogeny $\abvar\rightarrow \abvar_0\times C$, whence follows the existence of $i$. 
 
 Post-composition with $i$ defines an  injective map $\Hom(B \times_{\Fpbar} C,\abvar) \rightarrow \End(B \times_{\Fpbar} C)$. There is also an injective map $\Hom(B,\abvar_0) \rightarrow \End(B)$. We have that $\End(B) = \End(B\times C)$ (this is because $\End B \subset  \End(B\times C) = \End(B\times k(C)) \subset \End(B\times \overline{k(C)}) = \End(B) $, where the equality $\End(B\times C) = \End(B\times k(C))$ follows from \cite{FaltingsChai}). Therefore, it suffices to prove that the image of $\Hom(B\times_{\F} C,\abvar)$ in $\End(B\times_{\F} C)$ has $p$-power index. But this is true, as the map $i$ is a $p$-power isogeny, and therefore any endomorphism of $B$ whose kernel contains the kernel of $i$, a $p$-power group, must lie in the image. 
 
 We now prove that the quasi-isogeny is compatible with polarizations. Without loss of generality, we assume that $j$ (and therefore also $j_0$) is an isogeny. Consider the two polarizations $\lambda_{B}$ and $j^*(\lambda_{\abvar})$ on $B\times C$. We view polarizations as maps of abelian schemes between $B\times C$ and its dual $B^{\vee} \times C$. By assumption, there exists an integer $n$ such that $n\lambda_{B} - j^*(\lambda_{\abvar})$ restricts (by specializing to $x_0$) to the zero map between $B$ and $B^{\vee}$. The result follows from the fact that the specialization map is injective. 
\end{proof}

\begin{proof}[Proof of \autoref{prop:isogenyovertreeofcurves}]
This follows directly from the fact that the intersection graph of $C$ is a tree, and \autoref{lem:isogovercurve} above. 
\end{proof}

\subsection{Proof of \autoref{thm:maptoRZ}}
\label{sec:proofoflifttoRZ}

\begin{proof}

We first treat the case of $\Dstar$. Write $\Dstar = \bigcup_{n\geq 0} \Ann_n$ as an increasing union of closed annuli, and let $f_n = f|_{\Ann_n}$. Without loss of generality, let $\tilde{x} \in \Ann_0$ be a point such that $f(\tilde{x})$ specializes to $x$. For any fixed $n$, the map $\Ann_n \rightarrow \AgK^{\an}$ arises from a map of admissible formal schemes $g_n: \fA_n \rightarrow \AgK$, where $\fA_n$ is as in  \autoref{prop:specialfiber} -- in particular, the reduced, geometric special fiber $C_n \subset \fA_n$ is connected, its intersection graph is a tree, and each irreducible component is isomorphic to either $\P^1$ or $\A^1$. 

We now consider the identity map on $\abvar_{x}$ as a quasi-isogeny. By  \autoref{prop:isogenyovertreeofcurves}, this quasi-isogeny extends uniquely to a polarized quasi-isogeny $\rho:  \abvar_{x}\times C_n \rightarrow g_n^{-1}(\abvar)_{C_n}$. Further, as quasi-isogenies uniquely deform over nilpotent extensions, we obtain a polarized quasi-isogeny (again uniquely determined!) $\rho:  \abvar_{x}\times \fA_{\F_p} \rightarrow g_n^{-1}(\abvar)_{\fA_{\F_p}} $. This data uniquely defines a lift $\tilde{f}_{n,\Breve{K}}: \Ann_0\rightarrow \RZ^{\rig}_x$. Further, note that the image of $\tilde{x}$ was fixed once we defined $\rho$ to extend the identity map on $\abvar_{x}$. 

We now let $\mathfrak{T}\subset \AgK^{\an}$ denote the tube over the locus of points in $\AgK(\Fpbar)$ that are $p$-power quasi-isogenous to $x \in \AgK(\Fpbar)$, and $\mathfrak{T}^\circ$ the connected component of $\mathfrak{T}$ that meets the image of $f$. 
Let $(\RZ_x^{\circ})^{\rig}$ denote a connected component of $\RZ_x$ such that the natural map $j : (\RZ_x^{\circ})^{\rig} \rightarrow \mathfrak{T}^\circ$ is a topological covering.  
As $j: \RZ_{x}^{\rig}\rightarrow \mathfrak{T}$ is a topological cover, any lift $\tilde{f}_{n,\Breve{K}}$ of $f_{n,\Breve{K}}$ is uniquely determined once the image of $\tilde{x}$ is fixed. It therefore follows that $\tilde{f}_{n,\Breve{K}}$ is compatible as we range over all positive integers $n$. The result for $\Dstar$ follows. The result for $\sD$ follows immediately from the result for $\Dstar$ along with the fact that $j$ is a topological cover.

We are now ready to treat the case of $\Dab$. As all the spaces we consider are taut, we can and will work in the category of Berkovich analytic spaces. We first claim that the image of $f$ is contained in a connected component $\mathfrak{T}^\circ$ of the image $\mathfrak{T}$ of $j : \RZ_x^{\rig} \rightarrow \AgK^\an$. We note that $\mathfrak{T}^\circ$ is a rational open subspace of $\AgK^\an$ and that $j : \RZ(x)^{\rig} \rightarrow \mathfrak{T}^\circ$ is a topological cover on the level of the corresponding Berkovich analytic spaces \cite[Ch II Theorem 7.2.4]{Andre}.
To prove the claim, we assume without loss of generality that $b=0$. It then follows from \autoref{prop:isogenyovertreeofcurves} that $f(\Db)$ is contained in $\mathfrak{T}^\circ$, thus proving the claim. The skeleton of the Berkovich space associated to $\Dab$ is contractible \cite{berkovich-local-contractibility}, and therefore the Berkovich space underlying $\Dab$ is also contractible. Since the map $j$ is a topological cover of Berkovich analytic spaces, $f$ lifts to $\RZ_x^{\rig},$ as desired. 
\end{proof}

\section{Borel extension for Rapoport-Zink spaces and Shimura varieties}\label{sec:extension for RZ}

The purpose of this section is to prove \autoref{main} and \autoref{introRZextension}, i.e. Rapoport-Zink spaces and Shimura varieties of abelian type satisfy the Borel extension property. We will first prove a very local extension result for Rapoport-Zink spaces (\autoref{extendRZgeneral} below). 
We will then deduce \autoref{main} and \autoref{introRZextension} at the end of this section.


\begin{theorem}\label{extendRZgeneral}
    Let $X$ be a smooth rigid-analytic variety over $F$, and let $f: \Dstar \times X \rightarrow \RZ^{\rig}$ be a rigid-analytic map defined over $F$. Then there exists a rational open neighbourhood $U_z \subset \sD \times X$ and an analytic extension of $f$ to $U_z$ for every classical point $z\in (\sD \times X) \setminus (\Dstar \times X).$
\end{theorem}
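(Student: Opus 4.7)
Fix a classical boundary point $z = (0,x_0) \in (\sD \times X)\setminus (\Dstar \times X)$. Applying Kiehl's tubular neighbourhood theorem as in the proof of \autoref{hyperlocalimpliesgeneraltopologicalcover}, I may shrink $X$ and assume that $X = \sD^b$ is a polydisc centred at $x_0$, so the task becomes extending $f : \Dstar \times \sD^b \rightarrow \RZ^{\rig}$ across the divisor $\{0\}\times \sD^b$ on some rational open neighbourhood of $z$. Following the strategy sketched in the introduction, the plan is to work through the period map $\per: \RZ^{\rig} \rightarrow \cF$: extend the composite $\per\circ f$ to a neighbourhood of $z$ using that $\cF$ is projective; verify that the extension lands in the image of $\per$ by extending the associated filtered $F$-isocrystal and invoking Hartl's admissibility criterion; and then lift back to $\RZ^{\rig}$ via the topological covering property of $\per$ established by De Jong.

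\textbf{Step 1 (Extending after the period map).} Since $\cF$ is projective, the map $\per \circ f : \Dstar \times \sD^b \to \cF^{\an}$ extends to a rigid-analytic map $g : U_z \to \cF^{\an}$ on some rational open neighbourhood $U_z \subset \sD\times\sD^b$ of $z$. This is a standard consequence of the Riemann extension theorem and Lütkebohmert's results on meromorphic extension \cite{lutkebohmert}, applied componentwise to a local embedding of $\cF$ into $\P^N$; the argument is essentially the projective-space case already used in the proof of \autoref{hyperlocalextensionforX}.

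\textbf{Step 2 (The extension lands in the admissible locus).} Consider the pullback $\mathbb{L} := f^\ast \Tate$, a $\Z_p$-local system on $\Dstar \times \sD^b$ whose fiber at each classical point is a crystalline Galois representation. By \autoref{thm:triviality-of-vbconnection-RZ}, the pullback of $\Vdr$ under $f$ is the trivial bundle with connection equipped with a varying filtration determined by $\per\circ f$; by Step 1 this filtered bundle with connection extends canonically to $U_z$. Invoking the $p$-adic Riemann-Hilbert correspondence of \cite{DLLZ} together with Shimizu's extension results \cite{koji} for $p$-adic local systems with horizontal de Rham structure, the local system $\mathbb{L}$ extends across the boundary divisor to a $\Z_p$-local system on $U_z$, and the extension remains fiberwise crystalline with the filtration and Frobenius structure coming from the extended de Rham data. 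Hartl's description of $\mathrm{Im}(\per)$ inside $\cF^{\an}$ as the weakly admissible locus for this filtered $F$-isocrystal datum then forces $g(U_z) \subseteq \mathrm{Im}(\per)$.

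\textbf{Step 3 (Lifting back to $\RZ^{\rig}$).} By De Jong \cite{deJong-etale}, the restriction of $\per$ to each connected component of $\RZ^{\rig}$ is a topological covering onto the corresponding connected component of its image. After possibly shrinking $U_z$ to a polydisc (hence simply connected), the map $g : U_z \to \mathrm{Im}(\per)$ admits a unique rigid-analytic lift $\tilde{g}: U_z \to \RZ^{\rig}$ whose restriction to the non-empty connected open $(\Dstar \times \sD^b)\cap U_z$ agrees with $f$; this provides the desired extension of $f$ near $z$.

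\textbf{Main obstacle.} The delicate step is Step 2. Extending the de Rham side across $\{0\}\times \sD^b$ is essentially free from \autoref{thm:triviality-of-vbconnection-RZ}, and lifting through $\per$ in Step 3 is formal once the image condition is secured. The real work is translating between the \'etale and de Rham sides across the boundary: one must control the monodromy of $\mathbb{L}$ near the divisor well enough that the Riemann-Hilbert / Shimizu machinery outputs an honest extension to $U_z$, and then verify that the extended Frobenius structure satisfies Hartl's admissibility criterion fiberwise. The crystallinity of $\mathbb{L}$ on $\Dstar\times \sD^b$, coming from the fact that $\RZ^{\rig}$ parameterizes $p$-divisible groups, is what makes this feasible.
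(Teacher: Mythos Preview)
Your overall plan matches the paper's: extend through the period map, control the image via crystallinity, then lift back. Steps 1 and 2 are essentially what the paper does (the paper packages them as its Step 1, using \autoref{localsystemextendscrystalline} and \autoref{Shimizu}). The gap is in your Step 3, and it stems from a conflation already present in Step 2.

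The image of $\per$ is \emph{not} the weakly admissible locus $\cF^{wa}$. Hartl's result is that $\mathrm{Im}(\per)$ is the \emph{admissible} locus $U\subset \cF^{wa}$; the inclusion $U\hookrightarrow \cF^{wa}$ is an \'etale morphism that is bijective on classical points but in general is not an isomorphism of adic spaces. Your crystallinity argument only shows that $g$ sends classical points of $U_z$ into $\cF^{wa}$; it does not by itself give $g(U_z)\subset U$. Correspondingly, de Jong does not prove that $\per$ is a topological covering onto $\cF^{wa}$ (or even onto $U$): what one has is the factorization $\RZ^{\rig}\to U\to \cF^{wa}$, with the second map \'etale and bijective on classical points, and the first map an \emph{almost \'etale covering} (locally on $U$ a disjoint union of finite \'etale pieces). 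So the ``shrink to a polydisc and lift through a topological cover'' step does not apply as stated.

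The paper fixes this with two separate lifting lemmas. First (\autoref{lem:etale-bijection-lifting}), for an \'etale map bijective on classical points, a map that factors through the target after extension can be lifted on a rational neighbourhood of each classical point---this is exactly the shape of the conclusion you need, and it is all one can hope for since $U\to\cF^{wa}$ need not be surjective on non-classical points. Second (\autoref{lem:almost-etale-lifting}), for an almost \'etale covering, one lifts on each such neighbourhood by choosing the correct finite \'etale sheet. Your Step 3 should be replaced by this two-stage lift; simple connectedness of $U_z$ alone is not the right input.
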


\begin{remark}
     Our proof goes through for general Rapoport-Zink spaces associated to (minuscule) local Shimura data.   
\end{remark}

\subsection{Extending the crystalline local system}\label{sec:extendinglocalsystemDab}
In this section, we will work in the setting of a map $f: \Dab \rightarrow \RZ^{\rig}_{\mathbb{X}}$. For ease of notation, we will let $\RZ$ denote the generic fiber of $\RZ^{\rig}_{\mathbb{X}}$. Recall that $\RZ$ is equipped with $ \Vdr$ and $\Tate$.  The first step of \autoref{extendRZgeneral} is the following result.

\begin{theorem}\label{localsystemextendscrystalline}
The local system $f^*\Tate$ on $\Dab$ extends to $\sD^{a+b}$. Further, after possibly replacing $F$ by a finite extension, this extension has crystalline fibers at all classical points of $\sD^{a+b}$. 
\end{theorem}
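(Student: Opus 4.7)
\textbf{Proof plan for Theorem \ref{localsystemextendscrystalline}.}
My approach is to leverage the canonical triviality of $f^*\Vdr$ (Lemma \ref{thm:triviality-of-vbconnection-RZ}) against the de Rham structure on $f^*\Tate$ to force the $p$-adic \'etale monodromy around each puncture to be trivial, after which the local system extends by descent, and then to check that crystallinity is preserved at boundary points. The starting point is that by Lemma \ref{thm:triviality-of-vbconnection-RZ}, $f^*\Vdr$ is canonically isomorphic to $(\mathbb{D}_0\otimes \O_{\Dab},d)$ on $\Dab$, and hence extends tautologically to the trivial bundle with connection $(\mathbb{D}_0\otimes \O_{\sD^{a+b}},d)$ on the full polydisk.

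Next, I would observe that since every classical fiber of $f^*\Tate$ is the Tate module of a $p$-divisible group with good reduction over the ring of integers of a finite extension of $F$, each fiber is crystalline, and in particular de Rham. Invoking Shimizu's de Rham rigidity \cite{koji} (a single de Rham classical fiber on a connected smooth rigid analytic variety forces the whole local system to be de Rham) I conclude that $f^*\Tate$ is a de Rham $\Z_p$-local system on $\Dab$. Its associated filtered vector bundle with connection under the $p$-adic Riemann--Hilbert correspondence of \cite{DLLZ} is $f^*(\Vdr, \Fil^1)$, whose underlying connection is already identified with the trivial one above. The heart of the argument is then Step 3: fix one puncture and restrict to a small neighbourhood of the form $\Dstar\times \sD^{a+b-1}$. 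In the logarithmic $p$-adic Riemann--Hilbert formalism of DLLZ, the monodromy around the puncture of a de Rham local system is governed by the residue of the Deligne canonical extension of the associated connection along the puncture divisor. Since $f^*\Vdr$ extends with no pole at all, this residue vanishes; combined with semistable reduction / the $(\phi,N)$-module formalism, this forces the \'etale monodromy around each of the $a$ punctures to be unipotent with vanishing nilpotent part, and hence trivial.

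Once the monodromy of $f^*\Tate$ around each puncture is trivial, a standard gluing/descent yields an extension $\mathbb{L}$ of $f^*\Tate$ as a $\Z_p$-local system to $\sD^{a+b}$. It remains to check that $\mathbb{L}$ has crystalline fibers at the classical points of the puncture divisors, after possibly enlarging $F$. Away from the divisors this is given, and at a boundary classical point $z$ the associated filtered $(\phi,N)$-module has vanishing monodromy operator (inherited from the pole-free extension of the connection) and acquires a Frobenius structure by semicontinuity/limit from the interior; this implies that $\mathbb{L}_z$ is crystalline. The main obstacle is Step 3: translating ``trivial connection-side extension'' into ``trivial \'etale monodromy''. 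This translation is not formal -- it requires the logarithmic refinement of the $p$-adic Riemann--Hilbert correspondence of \cite{DLLZ} together with the precise match between residues of regular singular connections and nilpotent monodromy operators on $p$-adic \'etale local systems, and it is here that the bulk of the technical work, and the possible finite extension of $F$, is concentrated.
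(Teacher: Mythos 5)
Your approach is essentially the same as the paper's: extend $f^*\Vdr$ trivially via the canonical RZ trivialization, use the logarithmic $p$-adic Riemann--Hilbert functor of \cite{DLLZ} to translate ``vanishing residue of the canonical extension'' into ``trivial (quasi-)unipotent monodromy'' and thereby extend $f^*\Tate$ as a local system, and then upgrade crystallinity from the open part to the boundary. The paper organizes this as \autoref{vectorbundleconnectionextends}, \autoref{localsystemextendsifconnectiondoes}, and \autoref{Shimizu}, and your Step~3 correctly isolates the compatibility between the residue of $\nabla^{\log}$ and the log monodromy --- which the paper proves as a proposition via \cite[Proposition 6.4.4]{DLLZ2} --- as the crux.

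However, your final step has a genuine gap. You claim that the extended local system ``acquires a Frobenius structure by semicontinuity/limit from the interior,'' hence is crystalline at boundary classical points. This is not a valid argument: crystallinity is not a closed condition on fibers in any naive sense, and there is no off-the-shelf ``Frobenius by specialization'' statement for families of $p$-adic Galois representations at the level of generality needed here. The paper instead cites Shimizu \cite[Theorem 7.4]{koji}, which is exactly the statement that a \emph{horizontally} de~Rham local system whose fibers are crystalline on a Zariski-dense open has potentially crystalline fibers everywhere after a finite extension of $F$; it is precisely at this point (not at the de~Rham rigidity step) that Shimizu's work enters. Relatedly, you attribute de~Rham rigidity --- ``one de~Rham classical fiber implies the local system is de~Rham'' --- to \cite{koji}, but that principle is from Liu--Zhu \cite{liu-zhu-rigidity}; in fact you do not need to invoke rigidity at all here, since $\Tate$ on $\RZ^{\rig}$ is already de~Rham (indeed crystalline) by the $p$-adic comparison theorems for $p$-divisible groups. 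Your invocation of ``$(\phi,N)$-module formalism'' and ``semistable reduction'' in Step~3 is also a detour; the vanishing-residue-to-trivial-monodromy translation is purely a matter of the log Riemann--Hilbert comparison and needs no $(\phi,N)$-module input.
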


In order to prove \autoref{localsystemextendscrystalline}, we will first extend the vector bundle with connection:

\begin{lemma}\label{vectorbundleconnectionextends}
Let notation be as in \autoref{localsystemextendscrystalline}. Then $f^{*}\Vdr$ extends to a vector bundle with (non-singular) connection on $\mathsf{D}^{1+b}$. 
\end{lemma}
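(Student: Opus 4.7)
The plan is essentially to invoke the canonical trivialization of $\Vdr$ on $\RZ_{\mathbb{X}}^{\rig}$ recorded in Lemma~\ref{thm:triviality-of-vbconnection-RZ}, which arises from the universal quasi-isogeny over $\RZ_{\mathbb{X}}$. Under that trivialization, the vector bundle with connection $\Vdr$ on $\RZ_{\mathbb{X}}^{\rig}$ is identified with $(\mathbb{D}_0 \otimes_F \cO_{\RZ_{\mathbb{X}}^{\rig}}, d)$, where $\mathbb{D}_0$ is the rational contravariant Dieudonn\'e module of the framing $p$-divisible group $\pdiv_0$ (regarded as a finite-dimensional $F$-vector space) and $d$ is the trivial connection.

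Pulling back along $f : \Dab \to \RZ_{\mathbb{X}}^{\rig}$ therefore produces a canonical isomorphism
\[
f^{*}\Vdr \;\cong\; (\mathbb{D}_0 \otimes_F \cO_{\Dab},\, d).
\]
Since the right-hand side is a trivial vector bundle equipped with the trivial connection, it admits an obvious extension across $\{0\} \times \sD^b$ to the whole polydisc $\sD^{1+b}$, namely $(\mathbb{D}_0 \otimes_F \cO_{\sD^{1+b}}, d)$. This is manifestly a (trivial, in particular non-singular) vector bundle with connection on $\sD^{1+b}$ whose restriction to $\Dab$ recovers $f^{*}\Vdr$.

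I do not anticipate any serious obstacle at this stage; the lemma is a formal consequence of the canonical triviality of $\Vdr$ on Rapoport-Zink space, and the content has already been expended upstream in Lemma~\ref{thm:triviality-of-vbconnection-RZ}. The substantive difficulties in Theorem~\ref{localsystemextendscrystalline} will appear in the \emph{next} step, namely extending the $\mathbb{Z}_p$-local system $f^{*}\Tate$ itself across the boundary and verifying that the extension is fiberwise crystalline. That step will presumably require pairing the trivial extension of $\Vdr$ constructed here with crystalline period data attached to $\Tate$ via the $p$-adic Riemann-Hilbert formalism of \cite{DLLZ} and Shimizu's results \cite{koji}, but those inputs are not needed for the present lemma.
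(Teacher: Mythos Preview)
Your proposal is correct and follows exactly the paper's own argument: invoke the canonical trivialization of $\Vdr$ on $\RZ_{\mathbb{X}}^{\rig}$ from Lemma~\ref{thm:triviality-of-vbconnection-RZ}, pull it back along $f$, and then extend the resulting trivial bundle with trivial connection across the puncture. The paper's proof is a two-line version of precisely what you wrote.
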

\begin{proof}
 Recall that $\Vdr/\RZ$ admits a canonical trivialization \autoref{thm:triviality-of-vbconnection-RZ}. We pull this trivialization back to $\Dab$, and use it to extend $f^*\Vdr$ to a vector bundle with (non-singular) connection on $\sD^{1+b}$.  
\end{proof}

To finish the proof of \autoref{localsystemextendscrystalline}, we will make use of the $p$-adic Riemann-Hilbert correspondence as developed in \cite{liu-zhu-rigidity} and \cite{DLLZ} to prove that the local system extends to $\sD^{1+b}$ as a de Rham local system, and then use work of Shimizu \cite{koji} to deduce crystallinity at the puncture. For a smooth rigid analytic variety $X$ over $K$ equipped with a normal crossing divisor $Z$, let $D_{\mathrm{dR},\log}$ denote the logarithmic arithmetic Riemann-Hilbert functor as in \cite{DLLZ}. It sends a de Rham $\mathbb Z_p$-local system $L$ on $U=X-Z$ to a vector bundle $D_{\mathrm{dR},\log}(L)$ on $X$ equipped with a logarithmic connection $\nabla^{\log}$ (along $Z$), and a decreasing filtration of $D_{\mathrm{dR},\log}(L)$ by subbundles satisfying Griffiths transversality. The restriction of $D_{\mathrm{dR},\log}(L)$ to $U$ is $D_{\mathrm{dR}}(L)$ as from \cite{liu-zhu-rigidity}.

We need the following property of $D_{\mathrm{dR},\log}$.
\begin{proposition} 
  Assume that $Z$ is smooth.
  The comparison isomorphism of \cite[Theorem 3.7.11]{DLLZ} is compatible with the quasi-unipotent monodromy on $R\Psi^{qu}(L)$ and the residue $\mathrm{Res}_Z(\nabla^{\log})$, via logarithm.
\end{proposition}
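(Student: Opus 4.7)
The plan is to reduce the compatibility to a local computation near $Z$. Since $Z$ is smooth and both the comparison isomorphism and the claimed identification are local on $X$ in the pro-Kummer-\'etale topology, one may work in a small affinoid neighborhood of a point of $Z$ in which $(X,Z)$ is isomorphic to $(\sD\times Y,\{0\}\times Y)$ with local coordinate $t$ cutting out $Z$. Further, by the compatibility of $R\Psi^{qu}$ and of $D_{\mathrm{dR},\log}$ with smooth base change, I would first treat the case $Y$ is a point, i.e.\ $X=\sD$, $Z=\{0\}$, $U=\Dstar$, and then deduce the general case by base change in the $Y$ direction.

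Next I would unwind both sides in this local model. On the \'etale side, the quasi-unipotent nearby cycles $R\Psi^{qu}(L)$ at the geometric origin is the underlying space of $L$ at a generic geometric fiber together with the action of (a generator of) the tame inertia $I^{\mathrm{tame}}$; the quasi-unipotent monodromy is by definition this action, which on a de Rham local system is automatically quasi-unipotent. On the de Rham side, the construction of $D_{\mathrm{dR},\log}(L)$ in \cite{DLLZ} uses the logarithmic period sheaf $\mathcal{O}\mathbb B_{\mathrm{dR},\log}^+$, and the restriction $D_{\mathrm{dR},\log}(L)|_Z$ together with its residue $\mathrm{Res}_Z(\nabla^{\log})$ can be read off from the action on $L\otimes \mathcal{O}\mathbb B_{\mathrm{dR},\log}^+$ of the derivation $t\partial_t$ acting on the distinguished section $\log t$.

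The key mechanism is that, inside the log period sheaf, the inertia action on $\log t$ is by translation by a generator of $\mathbb Z_p(1)$, and the logarithmic connection is built by declaring $\nabla^{\log}(\log t)=dt/t$. These two structures are thus two avatars of the same operator. Using the identification of $I^{\mathrm{tame}}$ with $\mathbb Z_p(1)$ that is baked into $\mathcal{O}\mathbb B_{\mathrm{dR},\log}^+$, a direct computation shows that under the comparison isomorphism of \cite[Theorem 3.7.11]{DLLZ} the action of a generator $\gamma\in I^{\mathrm{tame}}$ on $R\Psi^{qu}(L)$ is carried to $\exp\bigl(-2\pi i\cdot \mathrm{Res}_Z(\nabla^{\log})\bigr)$ (where ``$2\pi i$'' denotes the $p$-adic Tate twist generator corresponding to $\gamma$); equivalently, $\mathrm{Res}_Z(\nabla^{\log})$ is the logarithm of $\gamma$, which is exactly the content of the proposition.

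The main obstacle is bookkeeping of signs, twists, and normalizations: one needs to match the chosen generator of $I^{\mathrm{tame}}$ with the generator of $\mathbb Z_p(1)$ used to define $\log t$ in the period sheaf of \cite{DLLZ}, and to reconcile the fact that quasi-unipotent monodromy is \emph{a priori} a $\mathbb Z_p$-linear automorphism while the residue is a $K$-linear endomorphism of a filtered vector bundle. This becomes transparent once everything is written inside $L\otimes \mathcal{O}\mathbb B_{\mathrm{dR},\log}^+$, but requires care. Once the one-dimensional case is established, functoriality of both $R\Psi^{qu}$ and $D_{\mathrm{dR},\log}$ under the smooth projection $\sD\times Y\to \sD$ immediately yields the general statement.
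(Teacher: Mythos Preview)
Your approach is sound and identifies the right mechanism, but the paper organizes the argument differently. Rather than reducing to the one-dimensional base via smooth base change, the paper works directly in a toric chart and invokes two specific results from the references: \cite[Proposition 6.4.4]{DLLZ2}, which identifies the monodromy automorphism on $R\Psi^{qu}(L)$ with the action of the generator $\gamma_1$ of the log fundamental group of $D^\partial$ on $\widehat{\mathbb L}^\partial$, and \cite[Lemma 3.4.7]{DLLZ}, which computes the residue of the log connection on $\mathcal R\mathcal H(\mathbb L)|_D$ as $t^{-1}\log(\gamma_1)$. The paper also inserts an intermediate step you bypass: it first proves the compatibility for the \emph{geometric} Riemann--Hilbert functor $\mathcal R\mathcal H$ and only then descends to $D_{\mathrm{dR},\log}$ by taking Galois invariants. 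Your direct computation inside $L\otimes \mathcal O\mathbb B_{\mathrm{dR},\log}^+$ (inertia translating $\log t$ by a generator of $\mathbb Z_p(1)$, versus $\nabla^{\log}(\log t)=dt/t$) is precisely what underlies those two cited lemmas, so the two arguments are the same calculation at heart; the paper's version is shorter because it cites rather than recomputes, while yours is more self-contained and makes the mechanism explicit. Your base-change reduction to $X=\sD$ is a clean simplification the paper does not make, and your honest flag about sign/twist bookkeeping is exactly where the paper leans on the normalizations already fixed in \cite{DLLZ}.
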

\begin{proof}
We inspect the proof of \cite[Theorem 3.7.5]{DLLZ} and follow notations of \emph{loc. cit.} So $Z$ will be denoted by a smooth divisor $D\subset X$ defined by $T_1=0$, where $T_1$ is part of the toric chart, and $L$ will be denoted by $\mathbb{L}$, which is an \'etale $\mathbb Z_p$-local system. 

We first assume that $\mathbb{L}$ has unipotent monodromy around $D$.
By \cite[Proposition 6.4.4]{DLLZ2}, we see that under the isomorphism (3.7.9) of \cite[Theorem 3.7.5]{DLLZ}, the automorphism of $\mathcal R\mathcal H(R\Psi^{qu}(L))$ induced by the unipotent monodromy on $R\Psi^{qu}(L)$ corresponds to the action of the generator $\gamma_1$ of the log fundamental group of $D^\partial$ on $\widehat{\mathbb L}^{\partial}$.  On the other hand, the map (3.7.8) of  \emph{loc. cit.} identifies the right hand side of (3.7.9) with the restriction of $\mathcal R\mathcal H (\mathbb L)$ to $D$. In addition, by \cite[Lemma 3.4.7]{DLLZ}, the residue of $\mathcal R\mathcal H (\mathbb L)|_D$ is $t^{-1}\log(\gamma_1)$. This gives the compatibility of monodromy and residue for $\mathcal R\mathcal H$. In the case $\mathbb L$ is de Rham, one takes Galois invariants to get the compatibility of $D_{\mathrm{dR},\log}$.
\end{proof}

\begin{lemma}\label{lem: extension away codim two}
    Suppose $Z\subset X$ is a normal crossing divisor, and we endow $X$ with the fs log structure coming from $Z$ as in \cite[Example 2.3.17]{DLLZ2}. Let $\mathring{X}$ be the complement of the union of any intersection of two components of $Z$. Let $L$ be a Kummer \'etale local system on $X_{\mathrm{ket}}$, which is an \'etale local system on $\mathring{X}$. Then $L$ is an \'etale local system on $X$.
\end{lemma}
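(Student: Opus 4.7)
My plan is to reduce to a standard local model on a polydisc and then invoke the explicit description of the Kummer \'etale fundamental group of a polydisc equipped with its normal crossing log structure.

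Since being an ordinary \'etale local system is a property that can be checked \'etale-locally on $X$, I would first fix a point $z \in Z$ and replace $X$ by a small \'etale neighbourhood of $z$. If $z$ lies on only a single component of $Z$ then $z \in \mathring{X}$ and there is nothing to prove, so I may assume $z$ lies on $k \geq 2$ components $Z_{i_1}, \ldots, Z_{i_k}$. After choosing an appropriate toric chart I may further assume that (a neighbourhood of $z$ in) $X$ is $\sD^n$ with $Z$ cut out by the monomial $t_1 \cdots t_k = 0$, equipped with its standard fs log structure.

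By \cite[\S 4]{DLLZ2}, Kummer \'etale local systems on this local model are classified by continuous representations of the Kummer \'etale fundamental group $\pi_1^{\mathrm{ket}}(X, \bar x) \cong \hat{\mathbb{Z}}(1)^{k}$, with the $i$-th factor generated by a tame loop $\gamma_i$ around the component $\{t_i = 0\}$. Since $\sD^n$ is simply connected in the ordinary \'etale topology, a Kummer \'etale local system on this local model descends to an ordinary \'etale local system on $X$ precisely when the associated representation is trivial, equivalently when each $\gamma_i$ acts trivially on the fiber $L_{\bar x}$.

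To verify this triviality, I would exploit the hypothesis that $L|_{\mathring X}$ is an ordinary \'etale local system. Fix a point $x_i \in Z_i \setminus \bigcup_{j \ne i} Z_j$ together with a small neighbourhood $U_i$ of $x_i$ inside $\mathring{X}$ whose log structure is supported on the single smooth divisor $\{t_i = 0\}$. Then $\pi_1^{\mathrm{ket}}(U_i) \cong \hat{\mathbb Z}(1)$ is generated by (the restriction of) $\gamma_i$, and the fact that $L|_{U_i}$ is ordinary \'etale forces its monodromy to be trivial. Transporting along a path from $\bar{x}_i$ to $\bar{x}$, I conclude that each $\gamma_i$ acts trivially on $L_{\bar{x}}$, so the full representation of $\hat{\mathbb{Z}}(1)^k$ is trivial and $L$ is \'etale on $X$. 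The only non-formal input is the local description of the Kummer \'etale fundamental group near a normal crossing in the rigid-analytic setting, which is \cite{DLLZ2}; the main obstacle in writing the proof out is just unpacking that local description carefully and making the path-transport argument rigorous.
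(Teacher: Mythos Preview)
Your argument is correct and is essentially an explicit unpacking of the paper's one-line proof, which simply invokes rigid Abhyankar's lemma via the last statement of \cite[Lemma 4.2.2]{DLLZ2}. One small correction: $\sD^n$ over $F$ is not literally simply connected unless $F$ is algebraically closed, but what you actually need---and what the cited lemma supplies---is that the kernel of $\pi_1^{\mathrm{ket}}(\sD^n) \to \pi_1^{\mathrm{et}}(\sD^n)$ is generated by the tame loops $\gamma_i$, after which your monodromy-triviality argument goes through verbatim.
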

\begin{proof}
The follows from rigid Abhyankar’s lemma. In particular see the last statement of \cite[Lemma 4.2.2]{DLLZ2}.
\end{proof}

\begin{theorem}\label{localsystemextendsifconnectiondoes}
Let $\mathsf{L}/\Dab$ denote a de Rham local system, and let $(\mathcal{V},\nabla)$ denote the vector bundle with connection associated to it via the $p$-adic Riemann-Hilbert correspondence. Suppose that $(\mathcal{V},\nabla)$ extends to a vector bundle with connection on $\sD^{1+b}$. Then, $L$ also extends to a local system on $\sD^{1+b}$.
\end{theorem}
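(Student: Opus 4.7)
The plan is to reduce to showing that the local monodromy of $\mathsf{L}$ around $D := \{0\} \times \sD^b$ is trivial, and then to extend $\mathsf{L}$ across $D$ by a rigid-analytic Abhyankar argument (\autoref{lem: extension away codim two}). By the $p$-adic monodromy theorem applied to the pullback of $\mathsf{L}$ along any classical slice in the $s$-directions, the local monodromy around $D$ is quasi-unipotent. After replacing $\mathsf{L}$ by its pullback under a Kummer cover $t \mapsto t^N$ of the $\Dstar$-factor and possibly enlarging $F$, I may assume this monodromy is unipotent; the extension hypothesis on $(\mathcal{V},\nabla)$ is clearly preserved under such pullbacks.

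Next, I apply the logarithmic $p$-adic Riemann--Hilbert functor $D_{\mathrm{dR},\log}$ of \cite{DLLZ} to produce a vector bundle with log connection $(\mathcal{W},\nabla^{\log}) := D_{\mathrm{dR},\log}(\mathsf{L})$ on $(\sD^{1+b}, D)$, whose restriction to $\Dab$ recovers $(\mathcal{V},\nabla)$. By the compatibility proposition preceding the statement, the residue $N := \mathrm{Res}_D(\nabla^{\log})$ corresponds, via the comparison isomorphism of \cite[Theorem 3.7.11]{DLLZ}, to $\log \gamma$, where $\gamma$ is the unipotent local monodromy; thus trivial residue is equivalent to trivial monodromy.

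To prove $N = 0$, I compare the two extensions $(\mathcal{W},\nabla^{\log})$ and $(\bar{\mathcal{V}},\bar{\nabla})$ on the formal neighborhood $\widehat{\cO}_{\sD^{1+b},z}$ of each classical point $z \in D$. The nonsingularity of $\bar{\nabla}$ yields (by the formal existence theorem for regular connections on a formal polydisk) a formal flat frame $\{e_i\}$ of $\widehat{\bar{\mathcal{V}}}_z$. Writing a formal frame $\{f_j\}$ of $\widehat{\mathcal{W}}_z$ as $f = e \cdot A$ for some $A \in \mathrm{GL}_r(\widehat{\cO}_{\sD^{1+b},z})$ and exploiting $\nabla(e_i) = 0$, the connection matrix of $\nabla$ in the $f$-basis becomes $A^{-1}\, dA$. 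Equating this with $N\,\tfrac{dt}{t} + (\text{regular})$ and matching the pole in $\tfrac{dt}{t}$ forces $A(z)\cdot N = 0$; invertibility of $A(z)$ then gives $N = 0$. Unipotence then yields $\gamma = \exp(N) = \mathrm{id}$, so the local monodromy around $D$ is trivial at every classical boundary point, and \autoref{lem: extension away codim two} delivers the desired étale extension of $\mathsf{L}$ to $\sD^{1+b}$. The main subtlety is the last step: ensuring that the formal flat basis for $\bar{\nabla}$ exists jointly in the variables $(t, s_1,\ldots,s_b)$, which is standard since $\bar{\nabla}$ is nonsingular on a full polydisk neighborhood of $z$ and the residue comparison can be carried out along the generic point of $D$.
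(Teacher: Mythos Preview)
Your overall strategy matches the paper's: apply $D_{\mathrm{dR},\log}$, show the residue vanishes by comparing with the hypothesised nonsingular extension, then deduce trivial monodromy and extend. The genuine gap is in the step where you write the frame $\{f_j\}$ of $\widehat{\mathcal{W}}_z$ as $f = e\cdot A$ with $A \in \mathrm{GL}_r(\widehat{\cO}_{\sD^{1+b},z})$. A priori the two vector bundles $\mathcal{W} = D_{\mathrm{dR},\log}(\mathsf{L})$ and $\bar{\mathcal{V}}$ agree only over $\Dab$, so the transition matrix $A$ lives in $\mathrm{GL}_r(\widehat{\cO}_{\sD^{1+b},z}[t^{-1}])$ and need not be regular or invertible at $z$; for instance $A = \mathrm{diag}(t,t^{-1})$ is perfectly possible for two different lattices in the same meromorphic bundle. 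Your equation $A(z)\cdot N = 0$ therefore has no meaning until you know $A$ extends across $t=0$, and that is exactly the content you are trying to establish. The paper closes this gap by invoking the \emph{uniqueness of the canonical extension}: since $D_{\mathrm{dR},\log}(\mathsf{L})$ has residue eigenvalues in $[0,1)\cap\Q$ (a property supplied by \cite{DLLZ}) and the nonsingular extension trivially has residue $0$, both are canonical extensions and hence coincide, forcing the residue to vanish. Your computation is essentially an attempt to reprove this uniqueness by hand, but it omits the crucial input that the residue eigenvalues are constrained to $[0,1)$, without which the argument cannot go through.

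There is also a secondary issue with the initial Kummer pullback. Passing to $t\mapsto t^N$ replaces the monodromy $\gamma$ by $\gamma^N$, so even if you succeed in showing the pulled-back monodromy is trivial you only obtain $\gamma^N = \mathrm{id}$, which does not let you extend the original $\mathsf{L}$. The paper avoids this entirely: it works with the original $\mathsf{L}$ throughout, using that $D_{\mathrm{dR},\log}$ already produces a log extension with residue in $[0,1)$ regardless of whether the monodromy is unipotent, so no preliminary Kummer cover is needed.
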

\begin{proof}
We equip $\sD^{1+b}$ with the standard log structure associated to the divisor $Z=\sD^{1+b}-\Dab$. (See \cite{DLLZ2}.) Let $\Omega^{\log}$ denote the corresponding sheaf of log differentials.

Let $D_{\mathrm{dR},\log}$ denote the logarithmic arithmetic Riemann-Hilbert functor as in \cite{DLLZ}. Then $D_{\mathrm{dR},\log}(L)$ consists of a vector bundle $\widetilde{\mathcal{V}}$ on $\sD^{a+b}$ equipped with a log connection 
$
\nabla^{\log}\colon\widetilde{\mathcal{V}}\to \widetilde{\mathcal{V}}\otimes\Omega^{\log}$. In addition, the residue $\mathrm{res}(\nabla^{\log}): \widetilde{\mathcal{V}}|_{Z}\to \widetilde{\mathcal{V}}|_{Z}$ has eigenvalues in $[0,1)\cap \mathbb Q$ along $Z$. In addition $(\widetilde{\mathcal{V}},\nabla^{\log})|_{\Dab}\cong (\mathcal{V},\nabla)$. In particular, $(\mathcal{V},\nabla)$ has a structure as a meromorphic connection (meaning there are no essential singularities) and $(\widetilde{\mathcal{V}},\nabla^{\log})$ is a canonical extension of $(\mathcal{V},\nabla)$. On the other hand by our assumption, $(\mathcal{V},\nabla)$ extends to a vector bundle with a connection on $\sD^{1+b}$ (which automatically is a canonical extension of $(\mathcal{V},\nabla)$). By the uniqueness of the canonical extension, we see that $\nabla^{\log}$ has no log pole, or equivalently the residue of $\nabla^{\log}$ is zero along any $Z$.

Now we use the above proposition. Together with \cite[Proposition 6.4.4]{DLLZ2}, we see that the log fundamental group acts trivially on $\overline{\mathsf{L}}_{Z^\partial(\xi)}$, where $\xi$ is a log geometric point of $Z^{\partial}$ (this is the special case of the space $U_J^{\partial}(\xi)$ defined in \emph{loc. cit.} for $J$ a singleton),  where $\overline{\mathsf{L}}=j_{\mathrm{ket},*}L$ is the kummer \'etale local system and where $j_{\mathrm{ket}}\colon \Dab_{\mathrm{et}}=\Dab_{\mathrm{ket}}\to (\sD^{1+b})_{\mathrm{ket}}$ is the map of kummer \'etale sites. Therefore, the pushforward of $\overline{\mathsf{L}}$ along $(\sD^{1+b})_{\mathrm{ket}}\to (\sD^{1+b})_{\mathrm{et}}$ gives the desired extension of $\mathsf{L}$ (using \cite[4.4.24-4.4.27]{DLLZ2} and \autoref{lem: extension away codim two}). 
\end{proof}

The following result follows directly from Shimizu \cite[Theorem 7.4]{koji}.  
\begin{theorem}[Shimizu]\label{Shimizu}
Let $\mathsf{L} / \sD^{1+b}$ denote a \emph{horizontally de Rham} local system which has crystalline fibers over $\Dab$. Then, after possibly replacing $F$ by a finite extension, $\mathsf{L}$ has potentially crystalline fibers at all classical points of $\sD^{1+b}$. 

\end{theorem}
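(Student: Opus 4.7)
The plan is to apply Shimizu's theorem \cite[Theorem 7.4]{koji} essentially verbatim; as the attribution in the statement indicates, no new work should be required beyond matching hypotheses.

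First, I would verify that the notion of \emph{horizontally de Rham} used in this paper agrees with the one employed in \cite{koji}. In both instances this unwinds to the condition that the filtered vector bundle with connection $D_{\mathrm{dR}}(\mathsf{L})$ produced on the base by the $p$-adic Riemann--Hilbert functor of \cite{DLLZ} extends across the boundary with a non-singular (holomorphic) connection, rather than merely a logarithmic one. This is exactly the situation already engineered in \autoref{vectorbundleconnectionextends} and used crucially in the proof of \autoref{localsystemextendsifconnectiondoes}, so the hypothesis is available to us for free.

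Shimizu's theorem then furnishes the following pointwise-to-global principle: a horizontally de Rham $\mathbb Z_p$-local system on a smooth rigid-analytic variety over a discretely valued $p$-adic field which has a single classical point of crystalline fiber becomes, after a finite extension of the base field, potentially crystalline at every classical point. The hypothesis that $\mathsf{L}$ has crystalline fibers on the nonempty open locus $\Dab \subset \sD^{1+b}$ supplies (abundantly) the required seed point, and so the conclusion follows.

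The only genuine content lies in the comparison of definitions of horizontal de Rhamness; once that is in place, the statement is a direct translation of \cite[Theorem 7.4]{koji}. Any residual discrepancy between the two formulations should be resolvable by unwinding both into the condition that the Sen endomorphism (equivalently, the residue of the canonical log connection in the sense of \cite{DLLZ}) vanishes along every boundary component, which is exactly the output of the argument that produced the extension in the first place.
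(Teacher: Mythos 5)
Your overall strategy --- quote Shimizu's Theorem 7.4 essentially verbatim --- is exactly what the paper does; it supplies no proof beyond the citation. But your unwinding of the hypothesis is wrong in a way that matters. The paper records immediately after the statement the correct definition: a local system is \emph{horizontally de Rham} if it is de Rham and the associated vector bundle with connection $D_{\mathrm{dR}}(\mathsf{L})$ \emph{admits a full set of solutions}, i.e.\ is generated by horizontal sections. This is an intrinsic condition on $\mathsf{L}$ as a local system on all of $\sD^{1+b}$; your characterization, that $D_{\mathrm{dR}}(\mathsf{L})$ ``extends across the boundary with a non-singular rather than logarithmic connection,'' does not even parse here, since in the statement of \autoref{Shimizu} there is nothing to extend --- $\mathsf{L}$ already lives on the closed polydisc. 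The two conditions are genuinely different: on a polydisc, every integrable connection is non-singular, but plenty of such connections admit no nonzero horizontal section (think of $\nabla f = df - f\,dt$). Your proposed fallback --- ``the Sen endomorphism (equivalently, the residue of the canonical log connection) vanishes along every boundary component'' --- is likewise not the definition; vanishing of the residue is what makes the log connection an honest connection in \autoref{localsystemextendsifconnectiondoes}, which is a separate step of the argument and says nothing about the existence of horizontal sections.

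The confusion is understandable because in the \emph{application} (the proof of \autoref{localsystemextendscrystalline}) both conditions are simultaneously satisfied: the key input there is \autoref{thm:triviality-of-vbconnection-RZ}, the canonical trivialization of $\Vdr$ over $\RZ^{\rig}$ induced by the universal quasi-isogeny. Pulling that trivialization back to $\Dab$ and then to the extension over $\sD^{1+b}$ provides, in one stroke, both the non-singular extension used in \autoref{vectorbundleconnectionextends} \emph{and} the full set of horizontal solutions needed to invoke \autoref{Shimizu}. \autoref{vectorbundleconnectionextends} on its own asserts only the former, so appealing to it, as you do, does not put you in the hypotheses of Shimizu's theorem. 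Were you to write this up you would need to cite the triviality of $\Vdr$ directly to justify horizontal de Rhamness, not the mere extendability of the connection.

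One further small caution: you state Shimizu's theorem as a single-point-to-global principle (one crystalline fiber suffices). That is fine and is consistent with \cite{koji}, but since the paper's hypothesis already gives crystallinity on the dense open $\Dab$, nothing is gained by sharpening the seed to a single point in this context; the two formulations are interchangeable here.
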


We recall from \emph{loc. cit.} that a local system is called horizontally de Rham if it is de Rham and the vector bundle with connection arising from the $p$-adic Riemann-Hilbert correspondence admits a full set of solutions. We now finish the proof of \autoref{localsystemextendscrystalline}. 
\begin{proof}[Proof of \autoref{localsystemextendscrystalline}]
    We have already seen that $f^*(\Tate)$ and $f^*(\Vdr)$ extend to $\sD^{1+b}$ in a way compatible with the $p$-adic Riemann Hilbert correspondence. Further, we used the canonical trivialization on $f^*(\Vdr)$ (induced by the canonical trivialization of $\Vdr/\RZ$) to extend it to $\sD^{1+b}$, and therefore the extension of $f^*(\Tate)$ to $\sD^{1+b}$ satisfies the hypotheses of \autoref{Shimizu}. The result follows.
\end{proof}

\subsection{Period maps and the proof of \autoref{extendRZgeneral}}
We first need the following results:

\begin{lemma}\label{lem:integrally-closed}

Let $U = \Spa(A,A^+)$ be a normal irreducible affinoid rigid-analytic variety. Let $\mathcal{O}_U$ denote the sheaf of rigid-analytic functions on $U$. Suppose $Y\subseteq U$ is a closed rigid-analytic subvariety that has everywhere positive codimension in $U$.

Then $A$ is integrally closed in $\mathcal{O}_U(U\setminus Y)$
\end{lemma}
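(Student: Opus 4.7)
The plan is to combine two ingredients: a boundedness estimate coming directly from the monic relation, and the non-archimedean Riemann extension theorem for normal rigid-analytic varieties. Fix $f \in \cO_U(U \setminus Y)$ integral over $A$, so there is a relation
\[
f^n + a_{n-1} f^{n-1} + \cdots + a_0 = 0, \qquad a_i \in A,
\]
valid in $\cO_U(U \setminus Y)$. The goal is to produce $\tilde{f} \in A$ whose restriction to $U \setminus Y$ equals $f$.

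First I would observe that $f$ is automatically bounded on $U \setminus Y$. Indeed, at any classical point $x \in U \setminus Y$ with $|f(x)| > 1$, the monic relation together with the ultrametric inequality gives
\[
|f(x)|^n \leq \max_{0 \leq i \leq n-1} |a_i(x)|\,|f(x)|^i \leq \Bigl(\max_i \|a_i\|_{\mathrm{sup}, U}\Bigr)\,|f(x)|^{n-1},
\]
so $|f(x)| \leq \max\{1,\max_i \|a_i\|_{\mathrm{sup},U}\}$ uniformly. Here I am using that $U$ is affinoid and normal, so the sup semi-norms of the $a_i$ on $U$ are finite. Consequently $f$ is a bounded analytic function on the open rigid-analytic subvariety $U \setminus Y$.

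Next I would invoke the non-archimedean Riemann extension theorem of Bartenwerfer (reformulated in the adic/Berkovich language by L\"utkebohmert and others): if $U$ is a normal affinoid rigid-analytic variety, $Y \subset U$ is a closed analytic subset of everywhere positive codimension, and $g \in \cO_U(U\setminus Y)$ is bounded, then $g$ extends uniquely to $\tilde{g} \in \cO_U(U) = A$. Applying this to $f$ produces the desired element $\tilde{f} \in A$; its uniqueness, together with the fact that $U \setminus Y$ is reduced and dense in $U$, guarantees $\tilde{f}|_{U\setminus Y} = f$, so $f \in A$ as required.

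The only nontrivial input is the Riemann extension statement, which is the expected obstacle: one has to be a bit careful that $Y$ need not be a divisor but only of everywhere positive codimension, and that $\cO_U(U\setminus Y)$ is being interpreted on the open adic subspace $U \setminus Y$. Both of these fit squarely within the generality in which Bartenwerfer's theorem for normal affinoids is stated, so no further argument is needed once that reference is granted.
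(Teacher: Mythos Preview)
Your proof is correct, and it follows a genuinely different path from the paper's. The paper argues that $f$, being analytic on the rigid-analytic open $U\setminus Y$ and satisfying a polynomial relation over $A$, is a rigid-subanalytic function; it then invokes a ``definable Riemann extension'' result from the theory of rigid-subanalytic sets (\cite{p-adic-definable-chow}, Theorem~2.22) to conclude that $f$ extends to an element of $\Frac(A)$, and finishes using that $A$ is integrally closed in its fraction field. Your argument instead extracts the boundedness of $f$ directly from the monic relation and applies Bartenwerfer's classical non-archimedean Riemann extension theorem for normal affinoids, which immediately gives $f\in A$ without passing through meromorphic functions. Your route is more elementary in that it avoids the rigid-subanalytic machinery entirely; the paper's route, on the other hand, would apply in situations where one only knows $f$ to be definable rather than analytic, which is not needed here but is consistent with the subanalytic tools used elsewhere in the paper.
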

\begin{proof}
Let $f \in \mathcal{O}_U(U\setminus Y)$ be integral over $A$. Then in particular $f : U\setminus Y \rightarrow F$ is rigid-subanalytic and therefore by \cite[Theorem 2.22]{p-adic-definable-chow}, extends to a meromorphic function $f \in \mathrm{Frac}(A).$ Since $A$ is integrally closed in its fraction field $\mathrm{Frac}(A)$, we get that $f \in A$. 
\end{proof}

\begin{corollary}\label{cor:finite-extension}

Let $Y \subset U$ be as in \autoref{lem:integrally-closed}. Let $h: Z\rightarrow W$ a finite morphism of rigid-analytic varieties, and let $f : (U\setminus Y) \rightarrow Z$ be a rigid-analytic map such that the composite map $g : (U\setminus Y) \xrightarrow{f} Z \xrightarrow{h} W$ extends to a rigid-analytic map $\widetilde{g} : U \rightarrow W$. Then $f$ extends to a rigid-analytic map $\widetilde{f} : U \rightarrow Z$ such that $\widetilde{g} = h\circ \widetilde{f}$. 
\end{corollary}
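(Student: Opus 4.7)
The plan is to pull $h$ back along $\widetilde{g}$ to obtain a finite cover of $U$ on which $f$ becomes a section away from $Y$, and then extend this section across $Y$ ring-theoretically via \autoref{lem:integrally-closed}. Concretely, form the fiber product $Z_U := Z\times_W U$ with projection $\pi : Z_U \to U$, which is finite (base change preserves finiteness). The universal property of the fiber product, together with $h\circ f = \widetilde{g}|_{U\setminus Y}$, produces a canonical lift $s : U\setminus Y \to Z_U$ that is a section of $\pi$ over $U\setminus Y$. It suffices to extend $s$ to a section $\widetilde{s} : U \to Z_U$, for then $\widetilde{f} := \mathrm{pr}_Z\circ\widetilde{s}$ will do the job, the identity $h\circ\widetilde{f} = \widetilde{g}$ being built into the fiber-product construction.

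To produce $\widetilde{s}$ I would work locally. Choose an admissible cover of $W$ over which $h$ becomes a finite map of affinoids and pull it back to an admissible affinoid cover $\{U_\alpha\}$ of $U$ so that each $\pi^{-1}(U_\alpha) = \Spa(B_\alpha, B_\alpha^+)$ with $B_\alpha$ a finite $A_\alpha := \mathcal{O}(U_\alpha)$-algebra. The restriction $s|_{U_\alpha\setminus Y}$ corresponds to an $A_\alpha$-algebra map $\phi_\alpha : B_\alpha \to \mathcal{O}_U(U_\alpha\setminus Y)$. Since every element of $B_\alpha$ is integral over $A_\alpha$, each $\phi_\alpha(b)$ is integral over $A_\alpha$ inside $\mathcal{O}_U(U_\alpha\setminus Y)$. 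Decomposing $U_\alpha$ into its finitely many connected components and invoking the facts that normality is preserved under rational localization and that a connected normal rigid-analytic space is irreducible (and each component is open in the irreducible $U$, so $Y$ retains positive codimension on it), \autoref{lem:integrally-closed} applied to each component forces $\phi_\alpha(b) \in A_\alpha$. Hence $\phi_\alpha$ factors through $A_\alpha$, yielding an affinoid section $\widetilde{s}_\alpha : U_\alpha \to \pi^{-1}(U_\alpha)$ extending $s$.

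Finally, the local extensions glue: since $\pi$ is finite and therefore separated, any two sections of $\pi$ over $U_\alpha\cap U_\beta$ agreeing on the Zariski-dense open $(U_\alpha\cap U_\beta)\setminus Y$ must coincide. So the $\widetilde{s}_\alpha$ assemble into a global section $\widetilde{s} : U \to Z_U$ and hence into the required $\widetilde{f}$. The only substantive step is the one appealing to \autoref{lem:integrally-closed}; everything else is formal commutative algebra and gluing. The sole book-keeping obstacle is ensuring that the normality--irreducibility and positive-codimension hypotheses of that lemma survive passage to the local pieces $U_\alpha$, which is handled componentwise as described above.
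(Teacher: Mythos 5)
Your argument is correct and rests on the same key mechanism as the paper's proof (integrality of the ring map plus the integral closedness supplied by \autoref{lem:integrally-closed}); the fiber product $Z_U = Z\times_W U$ simply repackages the paper's direct ring-theoretic argument with $f^\sharp : C\to\mathcal{O}_U(U\setminus Y)$ as the extension of a section of a finite cover. If anything, your write-up is more careful than the paper's about the localization step (covering by connected normal affinoid subdomains, checking that normality and the positive-codimension condition survive, and gluing the local extensions via density of $U\setminus Y$ and separatedness of the finite map), which the paper compresses into the assertion that the question is local in $U$ and in $W$.
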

\begin{proof}
Since the question of whether $f$ extends across $Y$ is local in $U$ and in $W$, we may assume that $W$, and hence also $Z$, is affinoid. Let us suppose that $Z = \Spa(C,C^+)$ and $W = \Spa(B,B^+)$ and let us denote by $h^\sharp : B \rightarrow C$ the finite morphism of affinoid $F$-algebras induced by $h : Z \rightarrow W$. The map $f$ corresponds to a map $f^\sharp : C \rightarrow \mathcal{O}_U(U \setminus Y)$ of $F$-algebras, such that $f^\sharp(h^\sharp(B)) \subseteq A \subseteq \mathcal{O}_U(U\setminus Y).$ Since $C$ is integral over $h^\sharp(B)$, we obtain that $f^\sharp(C)$ is integral over $A$, and thus by \autoref{lem:integrally-closed}, we obtain that $f^\sharp(C) \subseteq A$. Therefore, $f$ extends to a rigid-analytic map $\widetilde{f} : U \rightarrow Z$.
\end{proof}

\begin{corollary}\label{lem:etale-extension}
Let $Y\subset U$ be as in \autoref{lem:integrally-closed}. Let $h : Z \rightarrow W$ be an \'etale morphism of \emph{affinoid} rigid-analytic spaces over $F$ and let $f : (U\setminus Y) \rightarrow Z$ be a rigid-analytic map such that the composite map $g : (U\setminus Y) \xrightarrow{f} Z \xrightarrow{h} W$ extends to a rigid-analytic map $\widetilde{g} : U \rightarrow W$. Then $f$ extends to a rigid-analytic map $\widetilde{f} : U \rightarrow Z$ such that $\widetilde{g} = h\circ \widetilde{f}$.

\end{corollary}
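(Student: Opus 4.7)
The plan is to reduce the étale extension problem to the finite étale case (\autoref{cor:finite-extension}) via a fiber product and the local structure theorem for étale morphisms of adic spaces. Since $U$ is normal and irreducible and $Y$ has everywhere positive codimension, $U \setminus Y$ is dense in $U$, so the extension $\widetilde{f}$, if it exists, is unique; it therefore suffices to construct $\widetilde{f}$ locally on $U$ near each point of $Y$ and glue.

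I first pass to the affinoid fiber product $P := Z \times_W U$, with projections $\pi_U : P \to U$ and $\pi_Z : P \to Z$. The map $\pi_U$ is étale as a base-change of $h$, and the pair $(f, \mathrm{id})$ defines a section $s : U \setminus Y \to P$ of $\pi_U$ over $U \setminus Y$. Producing $\widetilde{f}$ is equivalent to producing a section $\widetilde{s} : U \to P$ of $\pi_U$ extending $s$, from which one recovers $\widetilde{f} := \pi_Z \circ \widetilde{s}$. Since $\pi_U$ is étale and separated, $s$ is a clopen immersion onto an open subspace $V := s(U\setminus Y) \subseteq \pi_U^{-1}(U\setminus Y)$ of $P$.

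Now fix $y \in Y$; the closure $T := \overline{V}$ in $P$ meets $\pi_U^{-1}(y)$ because $V$ surjects onto the dense subspace $U \setminus Y$. Pick any $p \in T \cap \pi_U^{-1}(y)$ and invoke the local structure theorem for étale morphisms of affinoid adic spaces: there exist affinoid open neighborhoods $P_0 \ni p$ in $P$ and $U_0 \ni y$ in $U$ with $\pi_U(P_0) \subseteq U_0$, together with a factorization $\pi_U|_{P_0} = h_0 \circ \iota$ in which $\iota : P_0 \hookrightarrow P_0^{\mathrm{fin}}$ is an open immersion of affinoids and $h_0 : P_0^{\mathrm{fin}} \to U_0$ is finite étale. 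Assuming one has chosen $U_0$ small enough that $s(U_0 \setminus Y) \subseteq P_0$, the composite $\iota \circ s : U_0 \setminus Y \to P_0^{\mathrm{fin}}$ satisfies $h_0 \circ (\iota \circ s) = \mathrm{id}_{U_0 \setminus Y}$, which extends to $\mathrm{id}_{U_0} : U_0 \to U_0$; \autoref{cor:finite-extension} applied to $h_0$ then yields an extension $\widetilde{t} : U_0 \to P_0^{\mathrm{fin}}$ with $h_0 \circ \widetilde{t} = \mathrm{id}_{U_0}$.

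The main obstacle is to justify the two shrinking arguments and recover from $\widetilde{t}$ a section valued in $P_0 \subseteq P_0^{\mathrm{fin}}$. First, one must arrange $s(U_0 \setminus Y) \subseteq P_0$ by shrinking $U_0$, which requires that $p$ be the unique limit point of $V$ above $y$ in a neighborhood; this should follow from showing that $T \cap \pi_U^{-1}(y)$ is discrete (a consequence of applying the local structure theorem at each point of $T \cap \pi_U^{-1}(y)$) and then working on each sheet separately. Second, one must verify $\widetilde{t}(y) \in \iota(P_0)$, which is the delicate step since open immersions are not closed in general: I would shrink $P_0^{\mathrm{fin}}$ by replacing it with the connected component containing $\widetilde{t}(U_0)$, and then shrink $U_0$ until this component lies inside $\iota(P_0)$, using that connected components of affinoids are clopen and that fibers of finite étale maps are discrete. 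Once these are justified, the local sections glue by uniqueness to the desired global extension $\widetilde{f} : U \to Z$.
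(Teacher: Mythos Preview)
Your approach shares the same skeleton as the paper's—factor the \'etale map as an open immersion followed by a finite \'etale map, apply \autoref{cor:finite-extension} to extend through the finite \'etale piece, then argue that the extension lands in the image of the open immersion—but the execution has a real gap in the last step, and the fiber-product detour creates unnecessary complications.

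The paper avoids your first shrinking problem entirely by applying the structure theorem to $h:Z\to W$ itself (via \cite[Proposition~8.1.2]{fresnel-vanderput}): after refining $W$ by rational opens (and correspondingly refining $U$ by connected rational affinoids, which remain normal and irreducible), one has a \emph{global} factorization $Z\hookrightarrow Z'\to W$ with $Z'\to W$ finite \'etale. Then $f$ already lands in $Z\subset Z'$ on all of $U\setminus Y$, and no delicate shrinking near points of $Y$ is needed.

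The genuine gap is your second step. Your connected-component argument is circular: shrinking $U_0$ and passing to the connected component of $h_0^{-1}(U_0)$ containing $\widetilde{t}(U_0)$ only helps if you already know $\widetilde{t}(y)\in\iota(P_0)$, since as $U_0$ shrinks that component contracts to a neighborhood of $\widetilde{t}(y)$, not necessarily inside $\iota(P_0)$. The clean way through—and this is the key point you are missing—is that an affinoid subdomain of an affinoid space is \emph{closed} in the naive (metric) topology. Since $\widetilde{t}$ is continuous, $U\setminus Y$ is dense in $U$, and $\widetilde{t}(U\setminus Y)\subseteq i(Z)$, one gets $\widetilde{t}(U)\subseteq\overline{i(Z)}=i(Z)$ immediately. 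This single observation replaces both of your shrinking arguments.
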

\begin{proof}
By passing to a further covering of $W$ by rational open subdomains, we may assume by \cite[Proposition 8.1.2]{fresnel-vanderput}, that $h$ factors as a composite $h : Z \xhookrightarrow{i} Z' \xrightarrow{h'} W$ where $i$ is an open immersion, and $h'$ is finite-\'etale. By \autoref{cor:finite-extension}, the composite $f' : (U\setminus Y) \xrightarrow{f} Z \xhookrightarrow{i} Z'$ extends to a rigid-analytic map $\widetilde{f'} : U \rightarrow Z'$ such that $\widetilde{g} = h' \circ \widetilde{f'}$. Furthermore, since $i(Z)$ being an affinoid subdomain of the affinoid space $Z'$ is closed (and open) in $Z$ in the naive metric topology and since $U\setminus Y$ is dense in $U$ in the naive metric topology, we get that $\widetilde{f'}(U) \subseteq i(Z)$. Now since $i$ is an open immersion, we get that $f$ also extends to a rigid-analytic map $\widetilde{f}:U \rightarrow Z$ such that $\widetilde{g} = h\circ \widetilde{f}$.
\end{proof}

\begin{lemma} \label{lem:etale-bijection-lifting}
  
  Let $Y\subset U$ be as in \autoref{lem:integrally-closed}. Let $h : Z \rightarrow W$ be an \'etale morphism of separated rigid-analytic varieties (not necessarily affinoid) over $F$ that is \emph{bijective} at the level of classical points. Let $f : (U\setminus Y) \rightarrow Z$ be a rigid-analytic map such that the composite map $g : (U\setminus Y) \xrightarrow{f} Z \xrightarrow{h} W$ extends to a rigid-analytic map $\widetilde{g} : U \rightarrow W$. Then for every classical point $z \in U$ there exists a rational open subset $U_z \subseteq U$, and an extension of $f\vert_{U_z\setminus Y} : U_z \setminus Y \rightarrow Z$ to an analytic map $\widetilde{f}_z : U_z \rightarrow Z$ such that $\widetilde{g}\vert_V = h\circ \widetilde{f}_z$. For any such open subspace $U_z$ the extension $\widetilde{f}_z$ is unique.
\end{lemma}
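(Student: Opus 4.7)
The plan is to reduce to \autoref{lem:etale-extension} by isolating the appropriate local piece of $Z$ around the unique preimage of $\tilde{g}(z)$. Fix a classical point $z \in U$, and set $w := \tilde{g}(z) \in W$. By the bijectivity hypothesis on $h$, there is a unique classical point $z' \in Z$ with $h(z') = w$.

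Since $h$ is étale and $z'$ is classical, the local structure theorem for étale morphisms in rigid-analytic geometry \cite[Proposition 8.1.2]{fresnel-vanderput} produces affinoid rational opens $z' \in Z' \subseteq Z$ and $w \in W' \subseteq W$ with $h(Z') \subseteq W'$, such that $h|_{Z'}$ factors as an open immersion of affinoids $Z' \hookrightarrow \widetilde{Z}$ followed by a finite étale map $\widetilde{Z} \to W'$. Shrinking $W'$ to a connected affinoid neighborhood of $w$ and restricting $\widetilde{Z}$ correspondingly, the global bijectivity of $h$ on classical points forces the connected component of $\widetilde{Z}$ containing $z'$ to have degree one over $W'$ (hence to be an isomorphism onto $W'$); any other connected component of $\widetilde{Z}$ would have empty fiber over $w$ and would therefore be empty. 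After a final shrinking of $Z'$, the map $h|_{Z'} : Z' \to W'$ realizes $Z'$ as an affinoid open subdomain of $W'$ via its image $h(Z')$, which contains $w$.

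Next, choose a rational open $U_z \subseteq U$ containing $z$ with $\tilde{g}(U_z) \subseteq h(Z')$ (possible by continuity of $\tilde{g}$). The key claim will be that $f(U_z \setminus Y) \subseteq Z'$: for each classical $u \in U_z \setminus Y$, the image $f(u)$ is the unique classical preimage of $\tilde{g}(u)$ under $h$, and since $h|_{Z'} : Z' \to h(Z')$ is itself a bijection on classical points, this preimage must lie in $Z'$. Consequently $f^{-1}(Z')$ will be an admissible open of $U_z \setminus Y$ containing all of its classical points, and hence will equal $U_z \setminus Y$. Applying \autoref{lem:etale-extension} to $h|_{Z'} : Z' \to W'$ and to $f|_{U_z \setminus Y} : U_z \setminus Y \to Z'$ with the extension $\tilde{g}|_{U_z} : U_z \to W'$ will then produce the desired analytic extension $\tilde{f}_z : U_z \to Z' \subseteq Z$ with $h \circ \tilde{f}_z = \tilde{g}|_{U_z}$. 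Uniqueness follows from étaleness of $h$ together with separatedness of $Z$: the diagonal $\Delta_{Z/W} \hookrightarrow Z \times_W Z$ is a clopen immersion, so any two such extensions agreeing on the dense admissible open $U_z \setminus Y$ will agree on all of $U_z$. The main obstacle will be the shrinking argument of the previous paragraph: bijectivity on classical points is precisely what allows the finite étale cover $\widetilde{Z} \to W'$ to be reduced to an isomorphism over a small connected affinoid, locally realizing $h$ as an open immersion of affinoids near $z'$. A secondary subtle point is promoting $f(u) \in Z'$ for classical $u$ to the containment $f(U_z \setminus Y) \subseteq Z'$, which rests on the fact that an admissible open of a reduced rigid-analytic variety that contains all of its classical points must coincide with the whole variety.
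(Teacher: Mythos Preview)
Your overall plan—localize to affinoids and invoke \autoref{lem:etale-extension}—is the same as the paper's, but two of your steps do not hold up. In your step~3, the auxiliary space $\widetilde{Z}$ in the factorization $Z'\hookrightarrow\widetilde{Z}\to W'$ is \emph{not} a subspace of $Z$, so the global bijectivity of $h:Z\to W$ on classical points tells you nothing about the fibres of $\widetilde{Z}\to W'$: points of $\widetilde{Z}$ lying over $w$ but outside the open $Z'\subset\widetilde{Z}$ need not lie in $Z$ at all, and there is no reason for the component through $z'$ to have degree one or for the other components to be empty. The paper does not attempt this reduction; it simply covers $Z$ by affinoid rationals $Z_j$ and applies \autoref{lem:etale-extension} to each $h|_{Z_j}:Z_j\to W$ without ever trying to force $h$ to be a local isomorphism.

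The ``fact'' you lean on in step~5 is also false: an admissible open of a reduced rigid-analytic variety containing every classical point need not be the whole variety. For instance, in the adic closed unit disc the complement of the closure of the Gauss point is open, contains every classical point, and is a proper subset. So knowing $f(u)\in Z'$ for classical $u$ does not yield $f(U_z\setminus Y)\subset Z'$. To be fair, the paper's proof has the exactly analogous unjustified step (that $f(U_{jk}\setminus Y)\subset Z_j$), so this gap is not peculiar to your argument. A correct repair proceeds via connectedness rather than density: choose $U_z$ connected and normal (so $U_z\setminus Y$ is connected), use that the diagonal $\Delta_{Z/W}\subset Z\times_W Z$ is clopen because $h$ is separated and \'etale, and compare $f$ with a second lift over $\tilde g$ to conclude they agree on all of $U_z\setminus Y$ once they agree at one classical point.
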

\begin{proof}
    Let $W = \cup_j W_j$ be an open cover of $W$ by rational subdomains. For each $j$, pick an open cover of $U_j := \widetilde{g}^{-1}(W_j)$ by connected rational affinoids, say $U_j = \cup_k U_{jk}$. We may replace $U$ by $U_{jk}$, $Y$ by $Y\cap U_{jk}$, $W$ by the affinoid $W_{j}$, and $Z$ by $h^{-1}(W_j)$, and thereby assume without loss of generality that $W$ is also affinoid. 

    Let $Z = \cup_j Z_j$ be an open cover of $Z$ by rational subdomains. The map $h|_{Z_j} : Z_j \rightarrow W$ is an \'etale map of affinoid spaces, and therefore $h(Z_j)$ is a finite union of rational subdomains of $W$ \cite[Proposition 8.1.2 (3)]{fresnel-vanderput}. 
    For each $j$, pick an open cover of $U_j := \widetilde{g}^{-1}(W_j)$, say $U_j = \cup_k U_{jk}$ by \emph{connected} rational affinoids $U_{jk}$. The affinoids $U_{jk}$ being connected and normal are in particular irreducible \cite[Lemma 2.1.4 and Lemma 2.2.3]{conrad-irreducible-comp}.
    We may thus apply \autoref{lem:etale-extension}, to get for each $j, k$ a unique map $\widetilde{f}_{jk} : U_{jk} \rightarrow Z_j$ making the following diagram commute:
\[
    \begin{tikzcd}
        U_{jk}\setminus Y \arrow[r, "f\vert_{U_{jk}\setminus Y}"] \arrow[d, hook] & Z_j \arrow[d, "h\vert_{Z_j}"] \\
        U_{jk} \arrow[ur, dotted, "\widetilde{f}_{jk}" description] \arrow[r, "\widetilde{g}"] & W 
    \end{tikzcd}
\]
 The uniqueness of the lifts $\widetilde{f}_{jk}$ implies that the $\widetilde{f}_{jk}$ agree on pairwise intersections as rigid-analytic maps into $Z$, and thus the $\widetilde{f}_{jk}$ glue to a lift $\widetilde{f} : \cup_j U_j \rightarrow Z$. We now note that $\cup_j U_j$ is an open subspace of $U$ that contains all the classical points of $U$. This completes the proof.
\end{proof}

\begin{lemma} \label{lem:almost-etale-lifting}
  Let $U = \Sp(A)$ be a normal, irreducible affinoid rigid-analytic variety over $F$. Suppose that $Y \subseteq U$ is a closed rigid-analytic subvariety that is everywhere of positive codimension in $U$. Let $h : Z \rightarrow W$ be an almost  \'etale covering map of rigid-analytic spaces (not necessarily affinoid) over $F$ and let $f : (U\setminus Y) \rightarrow Z$ be a rigid-analytic map such that the composite map $g : (U\setminus Y) \xrightarrow{f} Z \xrightarrow{h} W$ extends to a rigid-analytic map $\widetilde{g} : U \rightarrow W$. Then $f$ extends to a unique rigid-analytic map $\widetilde{f} : U \rightarrow Z$ such that $\widetilde{g} = h\circ \widetilde{f}$.   
\end{lemma}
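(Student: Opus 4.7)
The plan is to promote \autoref{lem:etale-bijection-lifting} from a purely local statement to a global one by exploiting the covering structure of $h$, using the normality and irreducibility of $U$ to glue.

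\emph{Uniqueness.} Any \'etale map is separated (its diagonal is an open immersion), so two extensions $\widetilde{f}_1,\widetilde{f}_2 : U \to Z$ with $h\circ \widetilde{f}_i = \widetilde{g}$ have equal equalizer locus, which is closed in $U$ and contains the dense open $U\setminus Y$; therefore $\widetilde{f}_1 = \widetilde{f}_2$.

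\emph{Existence.} First choose an admissible cover $W = \bigcup_i W_i$ by rational open subdomains on which the covering trivializes, i.e.\ $h^{-1}(W_i) = \bigsqcup_{\alpha} Z_{i,\alpha}$ with each $h|_{Z_{i,\alpha}} : Z_{i,\alpha} \to W_i$ either an isomorphism or (in the broader almost \'etale case) an \'etale map to which \autoref{lem:etale-bijection-lifting} applies. Set $U_i := \widetilde{g}^{-1}(W_i)$, a rational open of $U$, and write each $U_i$ as a finite union of connected rational affinoids $U_{i,k}$. Since $U$ is normal and irreducible and $Y\subset U$ has positive codimension everywhere, each $U_{i,k}\setminus Y$ is nonempty and remains connected (the complement of a closed analytic subset of positive codimension in a connected normal rigid-analytic space is connected, cf.\ \cite[Lemma~2.1.4 and Lemma~2.2.3]{conrad-irreducible-comp}). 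Hence $f(U_{i,k}\setminus Y)$ lies in exactly one sheet $Z_{i,\alpha(i,k)}$. Applying \autoref{lem:etale-bijection-lifting} to the pair $(U_{i,k}, Y\cap U_{i,k})$ and the sheet $h|_{Z_{i,\alpha(i,k)}}: Z_{i,\alpha(i,k)} \to W_i$, we obtain a rigid-analytic extension $\widetilde{f}_{i,k} : U_{i,k} \to Z_{i,\alpha(i,k)} \hookrightarrow Z$ with $h \circ \widetilde{f}_{i,k} = \widetilde{g}|_{U_{i,k}}$.

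\emph{Gluing.} On any overlap $U_{i,k}\cap U_{j,l}$, the restrictions $\widetilde{f}_{i,k}$ and $\widetilde{f}_{j,l}$ agree on $(U_{i,k}\cap U_{j,l})\setminus Y$, which is dense in every irreducible component (again by the positive codimension of $Y$ and normality of $U$). The local uniqueness clause of \autoref{lem:etale-bijection-lifting}, applied to each connected rational affinoid refinement of $U_{i,k}\cap U_{j,l}$, forces $\widetilde{f}_{i,k} = \widetilde{f}_{j,l}$ on the overlap. These maps therefore glue to the desired $\widetilde{f} : U \to Z$ with $h\circ \widetilde{f} = \widetilde{g}$.

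The main obstacle is verifying that on each $U_{i,k}$ the map $f$ factors through a single sheet of the trivialized cover, so that a single application of \autoref{lem:etale-bijection-lifting} suffices; this is where the hypotheses that $U$ is normal and irreducible and that $Y$ is of positive codimension are essential. If the notion of "almost \'etale covering" used here admits isolated ramification on $Y$ or on codimension-one walls, some care is needed in Step~1 to arrange that the trivialization $h^{-1}(W_i) = \bigsqcup_\alpha Z_{i,\alpha}$ indeed makes each sheet accessible to \autoref{lem:etale-bijection-lifting}; once that is arranged, the gluing argument proceeds as above.
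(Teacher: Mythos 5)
Your overall architecture matches the paper's: cover $W$ by opens over which the covering trivializes, refine $\widetilde{g}^{-1}(W_i)$ by connected normal affinoids so that $f$ factors through a single sheet, lift on each piece, and glue by uniqueness. Your separatedness argument for uniqueness is a nice explicit addition that the paper leaves implicit.

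However, there is a genuine misstep in the existence step. You invoke \autoref{lem:etale-bijection-lifting} to lift onto the individual sheets $Z_{i,\alpha}\to W_i$, but that lemma has the hypothesis that the \'etale map be \emph{bijective on classical points}. An almost \'etale covering map is, locally on $W$, a disjoint union of \emph{finite \'etale} covers $Z_{jk}\to W_j$, and these components will generally have degree $>1$, so \autoref{lem:etale-bijection-lifting} does not apply to $h|_{Z_{i,\alpha}}$. The correct tool here is \autoref{lem:etale-extension}, which applies to any \'etale morphism of \emph{affinoid} spaces and directly produces a global lift on each irreducible affinoid piece $U_{jl}$ (this also sidesteps the need to re-glue the local neighbourhoods that \autoref{lem:etale-bijection-lifting} would give). \autoref{lem:etale-bijection-lifting} is reserved by the paper for the other half of the period-map factorization, the map $U\to\mathcal{F}^{wa}$, which genuinely is an \'etale bijection. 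With that substitution your proof coincides with the paper's. A minor point: you cite Conrad's lemmas for connectedness of the complement $U_{i,k}\setminus Y$, whereas the paper uses them for the implication ``connected $+$ normal $\Rightarrow$ irreducible'' and then deduces irreducibility of the open subspace $U_{jl}\setminus Y$; the two formulations lead to the same conclusion, but the attribution in your write-up is slightly off.
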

\begin{proof}
    Let $W = \cup_j W_j$ be an open affinoid cover of $W$ such that $Z_j := h^{-1}(W_j)$ has an open cover by a disjoint union, $Z_j = \coprod_k Z_{jk}$, where the $Z_{jk}$ are affinoid spaces such that $h \vert_{Z_{jk}} : Z_{jk} \rightarrow  W_j$ is a finite \'etale map. 

    For each $j$, pick an open cover of $U_j := \widetilde{g}^{-1}(W_j)$ by connected (hence irreducible) affinoids, say $U_j = \cup_l U_{jl}$. Note that $U_{jl}\setminus Y$ is also irreducible. The irreducible space $U_{jl}\setminus Y$ has an open cover by the disjoint union of the rational open subsets $\coprod_{k} f^{-1}(Z_{jk})\cap (U_{jl}\setminus Y)$. Therefore, for each $j, l$, there is a single $k = k(j,l)$ such that $f(U_{jl}\setminus Y) \subseteq Z_{jk}$. Applying \autoref{lem:etale-extension} we get for each $j, l$, a lift $\widetilde{f}_{jl} : U_{jl} \rightarrow Z_{jk}$ making the following diagram commute:
    \[
    \begin{tikzcd}
        U_{jl}\setminus Y \arrow[r, "f\vert_{U_{jl}\setminus Y}"] \arrow[d, hook] & Z_{jk} \arrow[d, "h\vert_{Z_{jk}}"] \\
        U_{jl} \arrow[ur, dotted, "\widetilde{f}_{jl}" description] \arrow[r, "\widetilde{g}"] & W_j 
    \end{tikzcd}
\]
The lifts $\widetilde{f}_{jl}$ agree on pairwise intersections and hence glue to provide a lift $\widetilde{f} : U \rightarrow Z$, thereby proving the lemma.
\end{proof}

We are now ready to prove the main theorem of this section. 
\begin{proof}[Proof of \autoref{extendRZgeneral}]
    We may assume that $X$ is smooth affinoid. 
    Recall that we have the maps $f: \Dstar \times X \rightarrow \RZ^{\rig}$, and the \'etale period map $\check{\pi}: \RZ^{\rig} \rightarrow \cF^\an$. 
    The image of this period map is contained in the weakly admissible open subset $\cF^{wa} \subseteq\cF^{an}$ consisting of the weakly admissible filtrations.
   We let $U \subset \cF^{wa}$ denote the image of this map (see \cite{deJong-etale}).
    The morphism $\check{\pi}$ factors through $U$, $\check{\pi} : \RZ^{\rig} \rightarrow U \rightarrow \cF^{wa}$, wherein due to \cite[Theorem 8.4 (a) \& Prop. 6.3]{hartl}, we have that the map $U \rightarrow \cF^{wa}$ is an \'etale morphism of rigid analytic spaces that is \emph{bijective} at the level of classical points (see \cite{deJong-etale}). Furthermore, again using \cite{deJong-etale} the map $\RZ^{\rig} \rightarrow U$ is an almost \'etale covering map.

    \textbf{Step 1:} \emph{The composite map $\varphi := \check{\pi} \circ f : \Dstar \times X \rightarrow \cF^{wa}$ extends analytically to a map $\widetilde{\varphi} : \sD \times X \rightarrow \cF^{wa}$.}
    
    By \autoref{hyperlocalextensionforX}, to prove Step 1, it suffices to prove that every classical point $z\in \sD \times X \setminus \Dstar \times X$ has a neighbourhood $z \in U_z \subset \sD\times X$, such that $\varphi$ extends analytically to a map $U_z \rightarrow \cF^{wa}$. Therefore, we can reduce from $\Dstar\times X \subset \sD\times X$ to the case of $\Dab \subset \sD^{1+b}$. In particular, we may use the results proved in Section \ref{sec:extendinglocalsystemDab}.

    Let $\hat{\Tate}$ denote the extension of $f^*(\Tate)$ to $\Dab$. 
    Applying the $p$-adic Riemann-Hilbert correspondence, we obtain that $D_{dR}(\hat{\Tate})$ is an analytic vector bundle with integrable connection on $(\sD)^{a+b}$ endowed with a filtration that extends $\Vdr$ along with its Hodge filtration. Recall that the extension of $\Vdr$ to $\sD^{a+b}$ is the trivial vector bundle with connection. The extension of the filtration across the punctures of $\sD^{a+b}$ gives us the extension\footnote{We remark that \cite{DLLZ} gives us an extension of the map even when $L$ doesn't extend to a de Rham local system on all of $\sD$, because \cite{DLLZ} gives a filtered vector bundle with log-connection on all of $\sD$, and the only data needed to extend the map to all of $\sD$ is the data of the filtration at $0$.} of $\varphi : \Dab \rightarrow \cF^\an$ to $\tilde{\varphi} : \sD^{1+b} \rightarrow \cF^\an$. After possibly replacing $F$ by a finite extension, the crystallinity of the fibers of $\hat{\Tate}$ at all points of $\sD^{1+b}$ (by \autoref{localsystemextendscrystalline}) implies that $\tilde{\varphi}(\sD^{1+b})$ is contained in the weakly admissible locus $\cF^{wa}$ of $\cF$. This completes Step 1.
    
    \textbf{Step 2:} \emph{Completion of the proof.}

    By Step 1 and \autoref{lem:etale-bijection-lifting}, we get that for every classical point $z \in \sD \times X$, there is a connected affinoid open subdomain $U_z \subseteq \sD \times X$, such that the composite map $U_z\cap (\Dstar \times X) \rightarrow \RZ^{\rig} \rightarrow U$, extends analytically to a map $U_z \rightarrow U$. We now conclude the proof of the theorem by applying \autoref{lem:almost-etale-lifting} to each $U_z$, and recalling the fact that $\RZ^{\rig} \rightarrow U$ is an almost \'etale covering map. 

\end{proof}

We now prove the extension result for Shimura varieties of abelian type and Rapoport-Zink spaces. 

\begin{proof}[Proof of \autoref{main}]
The proof is just an exercise in assembling together the various results proved earlier in the paper. First, by \autoref{prop:reducetoAgpar} we may assume that $S(G,X)_{\sK} = \AgK$ where with full level structure at $\ell$ and parahoric level structure at $p$, and that $X=\sD^b$. By \autoref{thm:reducetocaseofgoodred}, we may assume that $f(\Dab) \subset \AgK^{\good}$. We then have (by \autoref{thm:maptoRZ}) that $f$ lifts to a map $\tilde{f}: \Dab \rightarrow \RZ^{\rig}$ for an appropriate Rapoport-Zink space $\RZ^{\rig}$. By \autoref{extendRZgeneral}, we have that there exists a neighbourhood $U_z \subset \Dab$ and an extension of $\tilde{f}$ to $U_z$ for every $z\in \sD^{1+b} \setminus \Dab$, whence an extension of $f$ to $U_z$. The theorem now follows from \autoref{hyperlocalextensionforX}.
\end{proof}

\begin{proof}[Proof of \autoref{introRZextension}]
   Recall that the map from Rapoport-Zink spaces onto their image in $\AgK$ is a topological cover. Therefore, the result follows from the extension theorem for $\AgK$ and \autoref{hyperlocalimpliesgeneraltopologicalcover}. 
\end{proof}

\begin{remark}
    Here we indicate another possible approach to \autoref{introRZextension}, which does not rely on \autoref{main}. Namely, $U\subset \cF^{wa}$ in the above proof is an open subset (in the sense of adic spaces) on which there exists a sympletic $\mathbb{Q}_p$-local system. Then $\RZ^{\rig}$ admits a moduli interpretation as the space of $\mathbb{Z}_p$-lattices certain type in this local system. Since on $\sD^{1+b}$ we do have such a $\mathbb{Z}_p$-local system, we obtain a map $\sD^{1+b}\to \RZ^{\rig}$ as desired.
\end{remark}

\section{Proof of the algebraicity theorems}\label{sec:last}

In this section, we will prove the algebraicity results and deduce the corollaries to the main results.  First, note that \autoref{algebraicityofabeloids} follows directly from \autoref{thm:borelalg} and the fact that $\Ag^{\an}$ is the moduli space of polarized abeloids. Similarly, \autoref{extensionforabeloids} follows also follows from this fact and \autoref{toroidalextension}.  \autoref{cor:Nadel} follows directly from \autoref{thm:borelalg} and \cite[Theorem 0.1]{Nadel}. Therefore, it suffices to prove \autoref{thm:borelalg}, \autoref{toroidalextension} and \autoref{nomapsfromgroups}. For the convenience of the reader, we recall the statement of \autoref{thm:borelalg}.
\begin{corollary*}
    Let $S(G,\mathcal{H})_{\sK}$ denote a Shimura variety of abelian type with torsion-free level structure, and $V$ be an algebraic variety over $F$. Then, every rigid-analytic map $f  : V^{\an} \rightarrow S(G,\mathcal{H})_{\sK}^\an$ over $F$ is algebraic.
\end{corollary*}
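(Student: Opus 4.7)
The plan is to mirror Borel's deduction of algebraicity from his extension theorem, now using \autoref{main} in place of complex Borel extension and rigid-analytic GAGA in place of Serre's GAGA. Since algebraicity of $f$ is local on $V$, we may replace $V$ by an affine open cover. For each such piece, a resolution of singularities $\pi : V' \to V$ reduces us to the case $V$ smooth: once we know $f \circ \pi^{\an}$ is algebraic, the resulting morphism $V' \to S(G,\cH)_\sK$ is forced by the analytic factorization of $f \circ \pi^{\an}$ through $\pi$ to descend to $V$ (after normalizing $V$, using that $\pi$ is proper with geometrically connected fibres over the non-smooth locus). We may therefore assume $V$ is smooth and quasi-projective, and by Nagata compactification followed by Hironaka fix a smooth projective variety $\bar V/F$ containing $V$ as a Zariski-open subset with simple normal crossings boundary $D := \bar V \setminus V$.

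The heart of the argument is to produce a rigid-analytic extension $\bar f : \bar V^{\an} \to (S(G,\cH)_\sK^{\textrm{BB}})^{\an}$. At any classical point $z \in D$ lying on exactly $k$ components of $D$, pick coordinates identifying a rational open neighbourhood $(U_z \cap V^{\an},\, U_z)$ with $((\Dstar)^k \times \sD^{\dim V - k},\, \sD^{\dim V})$. Applying \autoref{main} to one $\Dstar$ factor at a time, as indicated in the remark following that theorem, the restriction $f|_{U_z \cap V^{\an}}$ extends to a rigid-analytic map $U_z \to (S(G,\cH)_\sK^{\textrm{BB}})^{\an}$. Since the Baily--Borel compactification is projective, we embed it as a closed subvariety of some $\mathbb{P}^n$; the local extensions then give local extensions into $(\mathbb{P}^n)^{\an}$, and by \autoref{hyperlocalextensionforX} these patch together into a global rigid-analytic map $\bar f : \bar V^{\an} \to (\mathbb{P}^n)^{\an}$. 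Because $V^{\an}$ is dense in $\bar V^{\an}$ and $\bar f|_{V^{\an}}$ takes values in the closed subspace $(S(G,\cH)_\sK^{\textrm{BB}})^{\an}$, the same is true of $\bar f$.

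Finally, rigid-analytic GAGA for the proper rigid variety $\bar V^{\an}$ with target the analytification of the projective variety $S(G,\cH)_\sK^{\textrm{BB}}$ identifies $\bar f$ with the analytification of an algebraic morphism $\bar V \to S(G,\cH)_\sK^{\textrm{BB}}$; its restriction to $V$ takes values in $S(G,\cH)_\sK$ and recovers $f$. The step I expect to be most delicate is the local-to-global passage when $D$ has high-codimension strata: \autoref{main} extends only one $\Dstar$ factor at a time, so the fact that the iterated local extensions glue coherently on all of $\bar V^{\an}$ genuinely requires the projective target \autoref{hyperlocalextensionforX} provides, and is the reason we cannot work directly with a (non-projective) toroidal compactification in its place.
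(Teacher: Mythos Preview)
Your overall strategy---resolve singularities, compactify with simple normal crossings boundary, extend $f$ analytically to the compactification using \autoref{main}, then apply GAGA---is exactly the paper's approach. The local extension step at each boundary point is correct, and so is the use of GAGA at the end.

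The gap is the gluing step. You write that the local extensions into $(\PP^n)^{\an}$ ``patch together into a global rigid-analytic map $\bar f:\bar V^{\an}\to(\PP^n)^{\an}$'' by \autoref{hyperlocalextensionforX}. But that theorem is stated only for a source of the form $\Dstar\times X\subset\sD\times X$, i.e.\ for a single smooth boundary divisor; the pair $V^{\an}\subset\bar V^{\an}$ with an SNC boundary does not satisfy its hypotheses, and neither does \autoref{hyperlocalimpliesgeneraltopologicalcover} (which also requires the divisor $Y$ to be smooth). You correctly flag this passage as the delicate one in your final paragraph, but invoking \autoref{hyperlocalextensionforX} does not resolve it.

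The paper handles this by an inductive stratification rather than a single gluing lemma. One first extends $f$ across the smooth locus $\bigcup_i E_i^\circ$ of the boundary: Kiehl's tubular neighbourhood theorem identifies an affinoid cover near each $E_i^\circ$ with $(\,\cdot\,)\times(\Dstar\subset\sD)$, so \autoref{main} applies directly, and the resulting extensions agree on overlaps by uniqueness. This yields $f^\circ$ on the complement $V^\circ$ of the codimension-$\geq 2$ strata. One then repeats: near the smooth part $E_{ij}^\circ$ of the double intersections, Kiehl gives charts of the form $(\,\cdot\,)\times(\sD^2\setminus\{0\}\subset\sD^2)$, and since $f^\circ$ is already defined on $(\,\cdot\,)\times\Dstar\times\sD\subset(\,\cdot\,)\times(\sD^2\setminus\{0\})$, a single application of \autoref{main} extends it across the origin. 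Iterating through the strata of increasing codimension completes the extension to all of $\bar V^{\an}$. No global gluing result beyond the separatedness of the target is needed.

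A minor remark on your reduction step: the paper takes the cheaper route of simply restricting to the smooth locus of $V$ (noting that algebraicity on a Zariski-dense open suffices), rather than resolving and then descending. Your descent argument works, but is more than is needed.
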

\begin{proof}
    It suffices to prove that the restriction of $f$ to a Zariski dense open subscheme of $V$ is algebraic. Replacing $V$ by its regular locus, we may thus assume that $V$ is a smooth affine variety over $F$.

    By Hironaka's resolution of singularities \cite{hironaka-resolutionI}, we may find a smooth, proper variety $\hat{V}$ over $F$ and an open embedding $V \subseteq \hat{V},$ such that $\partial {V} := \hat{V}\setminus V$ is a divisor with simple normal crossings inside $\hat{V}$. By GAGA, it suffices to extend $f$ analytically to a map $\hat{V}^\an \rightarrow (S(G,\mathcal{H})_\sK^\mathrm{BB})^\an$.
    Denote the distinct irreducible components of $\partial {V}$ by $E_1,\ldots,E_m$. We note that each $E_i$ (given its reduced closed subscheme structure) is smooth over $F$. 
    For each $i \in \{1,\ldots,m\}$, let $E_i^\circ := E_i \setminus \cup_{r \neq i} E_r$, and let $V^\circ := V\bigcup \cup_{1\leq i \leq m} E_i^\circ \subseteq \hat{V}.$ By Kiehl's theorem on tubular neighbourhoods \cite[Theorem 1.18]{kiehl-derham}, we have an affinoid open cover of $(V^\circ)^\an$, $ (V^\circ)^\an = \cup_{j \in J} W_j, $ such that for each $j \in J$ such that $W_j \cap \partial {X} \neq \emptyset$, we have an isomorphism of open immersions 
        \[
    \begin{tikzcd}
        (W_j \cap V^\an) \arrow[r, hook] \arrow[d, "\simeq"] & W_j \arrow[d, "\simeq"] \\
        (W_j \cap \partial {V}) \times \Dstar  \arrow[r, hook] & (W_j \cap \partial {V}) \times \sD 
    \end{tikzcd}
\]
    Applying \autoref{main}, we see that $f\vert_{W_j\cap V^\an}$ admits a (unique) analytic extension to an analytic map $W_j \rightarrow (S(G,\mathcal{H})_\sK^\mathrm{BB})^\an$, for each $j \in J$, and therefore $f$ extends to an analytic map $f^\circ : (V^\circ)^\an \rightarrow (S(G,\mathcal{H})_\sK^\mathrm{BB})^\an.$

    For each $i < j < k$ in $\{1,\ldots,m\}$, set $E_{ij} := E_i \cap E_j$, and $E_{ijk} := E_i\cap E_j \cap E_k$.  
    Let \[E_{ij}^\circ := E_{ij} \setminus (\cup_{r<s<t} E_{rst})\]
    and let  $V^{\circ\circ} := V^\circ \bigcup \cup_{1\leq i < j \leq m} E_{ij}^\circ.$ Set $\partial V^{\circ} := \hat{V}\setminus V^{\circ}.$ Note that for each $i < j$, $E_{ij}^\circ$ is smooth over $F$.
    Again, applying Kiehl's theorem on tubular neighbourhoods we see that there is an open affinoid covering, $({V^{\circ\circ}})^\an = \cup_{j \in J} V_j,$ such that for each $j \in J$ such that $V_j \cap \partial V^{\circ} \neq \emptyset$, we have an isomorphism of open immersions: 

    \[
    \begin{tikzcd}
        (V_j \cap (V^\circ)^\an) \arrow[r, hook] \arrow[d, "\simeq"] & V_j \arrow[d, "\simeq"] \\
        (V_j \cap \partial {V^\circ}) \times \sD^2\setminus\{0\}  \arrow[r, hook] & (V_j \cap \partial {V^\circ}) \times \sD^2 
    \end{tikzcd}
\]

Applying the extension result, \autoref{main} now implies that $f^\circ$ extends to a rigid analytic map $f^{\circ\circ} : (V^{\circ\circ})^\an \rightarrow (S(G,\mathcal{H})^\mathrm{BB}_\sK)^\an.$ 
We proceed in this manner, iteratively as above to extend $f$ analytically to $\hat{V}$, at each step using Kiehl's theorem.   
\end{proof}

We now also deduce our result, \autoref{toroidalextension}, on extending rigid-analytic families of abelian varieties and K3 surfaces over $\Dstar$. 

\begin{proof}[Proof of \autoref{toroidalextension}]
The proof is similar to the complex case, and uses the valuative criterion for properness.

\end{proof}

Finally, we now deduce the proof of \autoref{nomapsfromgroups}. 

\begin{proof}[Proof of \autoref{nomapsfromgroups}]
    By \autoref{thm:borelalg}, it suffices to show that every algebraic map $\mathbb{G} \rightarrow S(G,\cH)_{\sK}$ is constant, and therefore we may work over $\C$. This is presumably well known to experts, so we will be content with a brief sketch. Using the structure of alegbraic groups, it suffices to prove constancy in the cases of abelian varieties, tori, semisimple groups, and unipotent groups. Abelian varieties and tori are covered holomorphically by $\C^g$. As there are no non-constant holomorphic maps $\C\rightarrow S(G,\cH)_{\sK}^{\an}$, the result follows for abelian varieties and tori, and also for unipotent groups. Semisimple groups have finite fundamental groups, and therefore constancy follows from Deligne's ``theorem of the fixed part'' \cite[4.1.2]{Delignefixedpart}.    

 \end{proof}

\appendix

\section{Formal models of rigid annuli}

Throughout this appendix,  let $R$ denote a discrete valuation ring with fraction field $F$, and with residue field $k$. For any $R$-scheme $Y/R$,  let $Y_F$ and $Y_k$ denote the generic and special fibers of $Y \to \Spec R$ respectively.

The purpose of this appendix is to prove \autoref{prop:specialfiber} below, which provides a characterization of the irreducible components in the special fibers of admissible formal models of rigid-analytic annuli, along with information on how these components intersect with one another.  The statement of the proposition refers to the {\sl intersection graph} of a $\kbar$-curve $Z$ which is defined as follows. 

\begin{definition}
The {\sl intersection graph} of a $\kbar$-curve $Z$ is the graph whose vertices are those points of $Z$ lying on at least $2$ irreducible components, with one edge connecting distinct vertices $p$ and $q$ for each irreducible component of $Z$ containing both $p$ and $q$.
\end{definition}

\begin{lemma}
 \label{lemma:AAA}
 Suppose $X/R$ is an integral, regular $R$-scheme proper and flat over $\Spec R$, and suppose $X_F$ is a smooth genus $0$ curve with $H^0(X_F, \O_{X_{F}}) = F$. Then  the curve $(X_{\kbar})_{\red}$ is connected, its irreducible components are isomorphic to $\P^1_{\kbar}$, and its intersection graph is a tree.
\end{lemma}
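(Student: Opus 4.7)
The plan is to first reduce to the case $k = \kbar$ by base-changing $X$ along the flat, unramified map from $R$ to the completion of its strict henselization; this preserves regularity, properness, flatness, and the hypotheses on $X_F$, so I may assume $k = \kbar$ and identify $X_{\kbar}$ with $X_k$. Connectedness of $(X_k)_{\mathrm{red}}$ then follows from Stein factorization: since $X$ is integral, proper, and flat over $R$ with $H^0(X_F, \O_{X_F}) = F$, the coherent sheaf $\pi_* \O_X$ is a finite torsion-free $R$-module of generic rank $1$ and hence equals $R$, so by cohomology and base change each geometric fibre is connected.

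I would prove the remaining two assertions by induction on the number $n$ of irreducible components of $X_k$, using intersection theory on the regular arithmetic surface $X$. Writing $X_k = \sum_{i=1}^n m_i C_i$ as a Cartier divisor, two numerical facts govern the argument. First, $X_k$ is the divisor of a uniformizer of $R$, so $X_k \cdot X_k = 0$ and $X_k \cdot C_i = 0$ for every $i$; Zariski's Lemma then forces the intersection form on $\langle C_1, \ldots, C_n\rangle$ to be negative semidefinite with kernel spanned by $(m_1, \ldots, m_n)$, and in particular $C_i^2 \leq -1$ whenever $n \geq 2$. Second, flatness gives $p_a(X_k) = p_a(X_F) = 0$, and adjunction on $X$ yields $K_X \cdot X_k = -2$.

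For the base case $n = 1$, writing $X_k = m_1 C_1$ gives $C_1^2 = 0$, and adjunction forces $m_1 (2 p_a(C_1) - 2) = -2$, hence $m_1 = 1$ and $p_a(C_1) = 0$, so that $C_1 \cong \mathbb{P}^1_{\kbar}$. For the inductive step ($n \geq 2$), I would first produce a $(-1)$-curve in $X_k$: combining $K_X \cdot C_i = 2 p_a(C_i) - 2 - C_i^2$ with $p_a(C_i) \geq 0$ and $C_i^2 \leq -1$ shows that $K_X \cdot C_i \geq 0$ whenever $C_i$ is not a smooth rational curve of self-intersection $-1$, so if no $C_i$ were such then $\sum m_i K_X \cdot C_i$ would be non-negative, contradicting the value $-2$. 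Hence some component $E$ of $X_k$ is a $(-1)$-curve. Castelnuovo's contraction theorem for regular fibered surfaces over a DVR (classical; see e.g.\ Liu's \emph{Algebraic Geometry and Arithmetic Curves}, Ch.~9) then produces a regular, proper, flat $R$-scheme $X'$ and a birational morphism $X \to X'$ contracting $E$ to a closed point $p$. Since the generic fibre is unchanged, the hypotheses of the lemma transfer to $X'$, and $(X'_k)_{\mathrm{red}}$ has $n - 1$ irreducible components, so by induction it is a connected tree of $\mathbb{P}^1_{\kbar}$'s.

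To finish, I reconstruct $X$ as the blow-up of $X'$ at $p$ and check that the conclusions survive: the strict transforms of the components of $X'_k$ remain $\mathbb{P}^1_{\kbar}$'s, the new exceptional divisor $E \cong \mathbb{P}^1_{\kbar}$ is attached, and depending on whether $p$ is a smooth point of $(X'_k)_{\mathrm{red}}$ (lying on a single component) or a node (lying on two), the effect on the intersection graph is to add a new leaf or to subdivide an existing edge --- in either case a tree is produced from a tree. The main obstacle I anticipate is invoking Castelnuovo's contraction cleanly in this arithmetic setting and verifying the inheritance of hypotheses by $X'$; both are standard but essential to the argument.
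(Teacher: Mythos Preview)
Your argument is correct and follows a genuinely different route from the paper's. The paper proceeds cohomologically: from the genus-zero hypothesis it extracts $H^{1}\bigl((X_{\kbar})_{\red},\O\bigr)=0$ via a genus comparison for fibered surfaces, observes that this vanishing is inherited by every closed subcurve (so each component, having $p_a=0$, must be $\P^1_{\kbar}$), and then rules out a cycle of components by a dimension count in the normalization exact sequence. You instead run an arithmetic minimal model program: Zariski's lemma together with adjunction forces a $(-1)$-curve in the fibre, Castelnuovo's criterion contracts it, and induction on the number of components does the rest. The paper's argument is shorter and needs nothing beyond coherent cohomology; yours is closer to how one actually classifies regular models of genus-$0$ curves over a DVR and makes the inductive structure of the special fibre explicit.

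Two small points worth tightening. First, base change to the completion of the strict henselization gives residue field $k^{\mathrm{sep}}$ rather than $\kbar$, so when $k$ is imperfect you need a brief extra remark to pass from $k^{\mathrm{sep}}$ to $\kbar$. Second, your reconstruction step presupposes that $p$ lies on at most two components of $(X'_k)_{\red}$ and, when on two, that they meet transversally; neither is automatic from the inductive hypothesis alone. These cases (three or more components through $p$, or tangential contact) can occur and require a separate---though straightforward---verification that blowing up still produces an acyclic configuration.
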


\begin{proof}
 Let $C_1, \dots, C_n$ denote the irreducible components (with reduced scheme structure) of the Cartier divisor $X_k \subset X$, and for each $i =1,\dots, n$ let $m_i$ denote the multiplicity of $C_i$ in $X_k$.  As $X$ is regular, each $C_i$ is also a Cartier divisor in $X$. Setting $d = \gcd (m_1, \dots, m_n)$, we let $D \subset X$ denote the Cartier divisor $\sum_{i} (\frac{m_i}{d})C_i$. By \cite[\href{https://stacks.math.columbia.edu/tag/0C69}{Tag 0C69}]{stacks-project}, we get: 
 \begin{align*}
     1 = d[\kappa:k](1-g_D),
 \end{align*}
 where $\kappa = H^0(D, \O_D)$, and $g_D = \dim H^1(D, \O_D)$. Thus $g_D=0$.  As $\dim X_k = 1$, it follows that the  map $$H^1(D,\O_D) \to H^1((X_k)_{\red},\O_{(X_k)_{\red}})$$ induced by the natural quotient map $\O_D \to \O_{(X_k)_{\red}}$ is surjective, and hence $$ H^1\left((X_k)_{\red},\O_{(X_k)_{\red}}\right) = 0.$$ Flat base change then gives 
 \begin{align}
 \label{eq:H1zero}
     H^1\left((X_{\kbar})_{\red},\O_{(X_{\kbar})_{\red}}\right) = 0.
 \end{align}
 
 For the rest of the proof, let $\Gamma$ denote the reduced, proper $\kbar$-curve $\left(X_{\kbar} \right)_{\red}$.  By \cite[\href{https://stacks.math.columbia.edu/tag/0AY8}{Tag 0AY8}]{stacks-project} we see that $\Gamma$ is connected. If $\Gamma' \subset \Gamma$ is any sub-curve of $\Gamma$ (meaning a  $1$-dimensional closed subscheme), then the map $$H^1(\Gamma, \O_{\Gamma}) \to H^1(\Gamma', \O_{\Gamma'})$$ induced by the natural quotient map of sheaves $\O_{\Gamma} \to \O_{\Gamma'}$ is surjective (using the long exact sequence and that $\Gamma$ is $1$-dimensional) and therefore  \eqref{eq:H1zero} implies $H^1(\Gamma', \O_{\Gamma'})=0$. An easy argument using the finiteness of normalization shows that if $E$ is an integral and proper curve over an algebraically closed field, then the vanishing of $H^1(E,\O_{E})$ implies $E$ is isomorphic to $\P^1$.  Applying this fact to each irreducible component of $\Gamma$ it follows that every irreducible component of $\Gamma$ is isomorphic to $\P^1_{\kbar}$.

 Finally, for sake of contradiction, suppose $\Gamma_j$, $j \in \Z/m \Z$, $m \geq 2$ are irreducible components of $\Gamma$ such that there exist points $p_{j} \in \Gamma_j \cap \Gamma_{j+1}$ for all $j$ with the further property that $p_i \neq p_j$ whenever $i \neq j$. (This is negating the property that the intersection graph is a tree.) Let $$Z =  \bigcup_{j \in \Z/ m\Z} \Gamma_{j} \subset \Gamma,$$ and let $$Z' = \coprod_{j \in \Z/m\Z} \Gamma_j$$ denote the normalization of $Z$ with normalization map $\eta: Z' \to Z$.  From the previous paragraph, we know that $H^1(Z, \O_Z) = 0$.  Consider the short exact sequence of sheaves 
 \begin{align}
     0 \to \O_Z \to \eta_{*}\O_{Z'} \to \mathcal{C} \to 0
 \end{align}
 where $\mathcal{C}$ denotes the cokernel, supported on the set $S \subset \Gamma$ of intersections among the $\Gamma_{j}$'s, of the map $\O_Z \to \eta_{*}\O_{Z'}$. As $H^1(Z, \O_Z) = 0$, the long exact sequence implies that the induced map on global sections 
 \begin{align}
 \label{eq:surjection}
 H^0(Z, \eta_{*}\O_{Z'}) \to H^0(Z, \mathcal{C})
 \end{align} is surjective.  We will now contradict this surjectivity.  The domain vector space in \eqref{eq:surjection} is isomorphic to $H^{0}(Z', \O_{Z'})$ and hence is $m$-dimensional.  As $\mathcal{C}$ is supported on $S$, the latter vector space is at least $\# S$-dimensional. But $\# S \geq m$ because each $p_j \in S$.  Thus, the induced map 
 $$H^{0}(Z',\O_{Z'})/H^{0}(Z, \O_{Z}) \to H^0(Z, \mathcal{C})$$ cannot possibly be surjective for dimension reasons, contradicting surjectivity of \eqref{eq:surjection} and the lemma is proved.
\end{proof}

\begin{remark}
\label{remark:normal} Using Zariski's Main Theorem, we can relax the assumption ``$X$ is regular" to ``$X$ is normal" in \autoref{lemma:AAA}.  It is unlikely that the regularity assumption can be relaxed much further in light of the classical example of a family of twisted cubics in $\P^3$ specializing to an irreducible, planar, nodal curve having an embedded point at the node. 
\end{remark}

Let $A = R[x,y]/(xy-\pi)$ and set $Y := \Spec A$.  $Y$ is an integral, regular, $R$-scheme flat over $R$ with relative dimension $1$.  Next, suppose $I \subset A$ is an ideal containing some positive power of $\pi \in A$.  Therefore, the closed subscheme $V(I)$ is supported on the special fiber $Y_k = \Spec k[x,y]/(xy)$.  

\begin{lemma}
 \label{lemma:replaceI} Let $J = (f_1, \dots, f_m) \subset S$ be an ideal in an integral domain $S$ such that there exists an element $h \in S \setminus \{0\}$ and elements $g_1, \dots, g_m \in S$ satisfying $f_i = hg_{i}$ for all $i$.  Then if $J' = (g_1, \dots, g_m)$, then the schemes $\Bl_{J}S$ and $\Bl_{J'} S$ are canonically isomorphic as $S$-schemes.       
\end{lemma}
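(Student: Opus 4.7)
The plan is to identify both blowups with Proj of their respective Rees algebras and exhibit a graded $S$-algebra isomorphism between these Rees algebras. Recall that $\Bl_J S = \Proj R(J)$ where $R(J) := \bigoplus_{n\ge 0} J^n$, with the $S$-scheme structure coming from the degree-$0$ piece, and likewise $\Bl_{J'} S = \Proj R(J')$. So it suffices to produce a canonical graded isomorphism $\varphi \colon R(J') \xrightarrow{\sim} R(J)$ that is the identity in degree zero.

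The construction of $\varphi$ is immediate from the assumption $f_i = h g_i$. First, observe that $J = hJ'$, and consequently $J^n = h^n (J')^n$ for every $n \ge 0$ (by induction on $n$). Define $\varphi$ to be multiplication by $h^n$ in degree $n$, that is $\varphi(x) = h^n x$ for $x \in (J')^n \subseteq S$. In degree $0$ this is the identity on $S$, so $\varphi$ is an $S$-algebra map provided it is multiplicative; this is clear from $\varphi(xy) = h^{n+m}xy = (h^n x)(h^m y) = \varphi(x)\varphi(y)$ for $x \in (J')^n$, $y \in (J')^m$.

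It remains to see that $\varphi$ is bijective in each degree. Surjectivity in degree $n$ is exactly the equality $J^n = h^n(J')^n$ from the previous paragraph. Injectivity is the one place where the hypothesis that $S$ is an integral domain and $h \neq 0$ is used: these assumptions ensure $h$, and hence $h^n$, is a non-zero-divisor in $S$, so multiplication by $h^n$ is injective on the submodule $(J')^n \subseteq S$. Applying $\Proj$ to the graded $S$-algebra isomorphism $\varphi$ produces the desired canonical isomorphism $\Bl_{J'} S \xrightarrow{\sim} \Bl_J S$ of $S$-schemes.

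There is no serious obstacle here; the lemma is essentially formal once the Rees algebra viewpoint is adopted. The only subtlety worth flagging is that one should not try to argue via the universal property of blowups by ``dividing out by $h$'', since on a test $S$-scheme $T$ the pullback of $h$ need not be a non-zero-divisor, which would make the equivalence of invertibility of $J \cdot \O_T$ and $J' \cdot \O_T$ delicate. The graded-ring argument bypasses this issue entirely by working upstairs in $S$, where being a domain gives us what we need.
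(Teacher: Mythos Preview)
Your proof is correct. Both you and the paper use the $\Proj$-of-Rees-algebra description of the blowup, but the packaging differs slightly: the paper works chart by chart, observing that the standard affine open $U_i = \Spec S\bigl[\frac{f_1}{f_i},\dots,\frac{f_m}{f_i}\bigr] \subset \Spec \Frac(S)$ is literally the same subring as $\Spec S\bigl[\frac{g_1}{g_i},\dots,\frac{g_m}{g_i}\bigr]$ (since $\frac{f_j}{f_i} = \frac{g_j}{g_i}$ in $\Frac(S)$), so the two blowups have identical affine charts with identical gluing data. You instead produce a single global graded $S$-algebra isomorphism $R(J') \to R(J)$ via multiplication by $h^n$ in degree $n$ and apply $\Proj$ functorially. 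The paper's version is marginally more hands-on, while yours is cleaner and makes more transparent exactly where the domain hypothesis enters (injectivity of multiplication by $h^n$). Either way the content is the same.
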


\begin{proof}
 The scheme $\Bl_{J}S = \Proj (S \oplus J \oplus J^2 \oplus \dots)$ has its usual affine open coverings \[U_{i} = \Spec S_i,\] where the ring $S_{i} := S\left[\frac{f_{1}}{f_{i}}, \dots, \frac{f_{m}}{f_{i}}\right]$ is the $S$-subalgebra of $\Frac (S)$ generated by the listed elements.  The open sets $U_i$ and $U_j$ are glued in the traditional way. Clearly, $S_i = S\left[\frac{g_{1}}{g_{i}}, \dots, \frac{g_{m}}{g_{i}}\right]$, and therefore the analogous standard affine opens $V_i$ of $\Bl_{J'}S = \Proj (S \oplus J' \oplus J'^2 \oplus \dots)$ are naturally identified with $U_i$, in a way which respects the gluing data.  The lemma follows.
\end{proof}

Suppose $f_1, \dots, f_{m} \in I$ generate the ideal $I \subset A$ (which contains some positive power of $\pi$).  If all $f_i$ are divisible by $x \in A$ or $y\in A$, then let $I'$ be the ideal obtained by dividing out first by the highest power of $x$  and then the highest power of $y$ dividing all $f_i$. 
\begin{lemma}
 \label{lemma:I'} The schemes $\Bl_{I}A$ and $\Bl_{I'}A$ are canonically isomorphic, and the closed subscheme $V(I') \subset Y$ is $0$-dimensional, supported on the special fiber $Y_{k}$.
\end{lemma}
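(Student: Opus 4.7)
My plan is to prove the two assertions separately; the first will be a direct two-fold application of Lemma~\ref{lemma:replaceI}, while the second will follow from the containment $I \subseteq I'$ together with a short argument on the nodal special fiber $Y_k$.

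For the canonical isomorphism $\Bl_I A \simeq \Bl_{I'} A$, I would first verify that $A$ is an integral domain, so that Lemma~\ref{lemma:replaceI} applies. This follows from the irreducibility of $xy - \pi$ in $R[x,y]$: regarded as a linear polynomial in $y$ over the UFD $R[x]$, its leading coefficient $x$ and constant term $-\pi$ are distinct prime elements, hence coprime. I would then apply Lemma~\ref{lemma:replaceI} twice --- first with $h = x^a$ to replace each $f_i$ by $f_i/x^a$, then with $h = y^b$ to pass to $g_i := f_i/(x^a y^b)$ --- and compose the two canonical isomorphisms to obtain $\Bl_I A \simeq \Bl_{I'} A$ over $A$.

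For the claim about $V(I')$, I would exploit the fact that at the level of ideals in $A$ we have $I = (x^a y^b)\cdot I' \subseteq I'$, which immediately gives $V(I') \subseteq V(I) \subseteq Y_k = V(\pi)$, so $V(I')$ is supported on the special fiber. To obtain $0$-dimensionality, I would use the maximality built into the definition of $a$ and $b$: some generator $g_j$ of $I'$ is not divisible by $x$, so its image in $A/(x) \cong k[y]$ is a nonzero polynomial, and $V(g_j) \cap V(x)$ is a finite set of closed points on the affine line $V(x) \cong \A^1_k$; hence $V(I') \cap V(x)$ is finite. The symmetric argument for $y$ shows $V(I') \cap V(y)$ is finite, and since $Y_k = V(x) \cup V(y)$, the subscheme $V(I')$ must be $0$-dimensional.

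The only subtle point will be the bookkeeping: verifying that ``the highest power of $y$ dividing every $f_i/x^a$'' is a well-defined notion in $A$, and that this second division does not disturb the maximality of $a$. Both will follow from observing that $x$ and $y$ are prime elements of $A$ (using that $A/(x) \cong k[y]$ and $A/(y) \cong k[x]$ are domains) and are not associates, so divisibility by powers of $x$ and by powers of $y$ can be tracked independently.
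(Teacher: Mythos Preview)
Your proposal is correct and follows essentially the same route as the paper's proof: the canonical isomorphism via Lemma~\ref{lemma:replaceI}, and the $0$-dimensionality by observing that the generators of $I'$ are not all divisible by $x$ nor all by $y$. You are simply more explicit than the paper---verifying that $A$ is a domain, tracking the two applications of Lemma~\ref{lemma:replaceI}, using $I\subseteq I'$ to place $V(I')$ inside $Y_k$, and arguing on each component $V(x)$, $V(y)$ separately rather than just asserting that $V(I')$ is a proper closed subset of $Y_k$---and your remark that $x$ and $y$ are non-associate primes in $A$ cleanly handles the bookkeeping issue you flagged.
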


\begin{proof}
The first assertion about the canonical isomorphism follows from \autoref{lemma:replaceI}. For the second assertion, we can and will replace $I'$ by its radical $\sqrt{I'}$.  Thus $V(I')$ is a subscheme of the special fiber $Y_k$.  Let $g_{1}, \dots, g_{m} \in I'$ be generators -- by assumption, the $g_i$ are not all divisible by $x$ or $y$. It follows that $V(I')$ is a proper (in the sense of sets) closed subscheme of $Y_k$, and hence is $0$-dimensional, as claimed in the lemma. 
\end{proof}

\begin{lemma}
 \label{lemma:AAimprovement} Suppose $I \subset A$ is an ideal containing some positive power of $\pi$.  There exists an integral, regular $R$-scheme $X/R$, flat over $\Spec R$, and a proper morphism $\rho: X \to \Bl_{I}A$ such that the composite map $X \to Y$ is an isomorphism over the open set $U = Y \setminus V(I)$.  Furthermore, the curve $(X_{\kbar})_{\red}$ is a tree of rational curves with two irreducible components isomorphic to $\A^1_{\kbar}$ and all others isomorphic to $\P^1_{\kbar}$.
\end{lemma}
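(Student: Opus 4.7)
The plan is to realize $X$ as (the restriction of) a resolution of singularities of a blowup on a projective compactification of $Y$, and then to read off the tree-of-rational-curves structure using \autoref{lemma:AAA}. First, I would invoke \autoref{lemma:I'} together with \autoref{lemma:replaceI} to replace $I$ by an ideal whose vanishing locus is a finite set of closed points on $Y_k$, so I may assume from the outset that $V(I)$ is already such a finite set.

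Next, I would compactify: set $\bar Y := \Proj R[u,v,w]/(uv - \pi w^2)$, with $Y$ identified with the affine chart $w \neq 0$ via $x = u/w$, $y = v/w$. A check on the three standard affine charts shows that $\bar Y$ is a regular, integral, proper, flat $R$-scheme with generic fiber the smooth conic $\bar Y_F \cong \P^1_F$, and with reduced special fiber $V(u) \cup V(v)$: two projective lines meeting transversally at the node $[0:0:1]$. The complement of $Y$ in $\bar Y$ is the line $V(w)$, which meets the special fiber in the two smooth points $p_1 = [1:0:0]$ and $p_2 = [0:1:0]$, each lying on a distinct component and neither coinciding with the node.

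The key step is then to extend $I$ by $\O$ on $\bar Y \setminus V(I)$ to get a coherent ideal sheaf $\tilde I$ on $\bar Y$, form the blowup $\Bl_{\tilde I}\bar Y$, and apply Lipman's resolution of singularities for excellent two-dimensional schemes to obtain a proper birational morphism $\bar X \to \Bl_{\tilde I}\bar Y$ with $\bar X$ regular and integral, chosen so that it is an isomorphism over the regular locus of $\Bl_{\tilde I}\bar Y$; in particular, the composite $\bar X \to \bar Y$ will be an isomorphism over $\bar Y \setminus V(I)$. Since $\bar Y$ is flat over $R$ and $\bar X$ dominates $\bar Y$, $\bar X$ is also flat over $R$, so $\bar X$ is a regular, integral, proper, flat $R$-scheme with $\bar X_F \cong \P^1_F$. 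Taking $X \subset \bar X$ to be the preimage of $Y$, $X$ will be open in $\bar X$ (hence regular and integral); the map $X \to \Bl_I A$ is the base change of the proper map $\bar X \to \Bl_{\tilde I}\bar Y$ along the open immersion $Y \hookrightarrow \bar Y$, hence proper; and the composite $X \to Y$ is an isomorphism over $U$.

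Finally, I would apply \autoref{lemma:AAA} to $\bar X$ to conclude that $(\bar X_{\kbar})_{\red}$ is a connected tree of $\P^1_{\kbar}$'s. Restricting to $X$ amounts to removing the preimages of $p_1, p_2$ under $\bar X \to \bar Y$; since $\{p_1,p_2\}$ is disjoint from $V(I)$, both the blowup and the resolution are isomorphisms at these points, so each $p_i$ has a unique preimage sitting as a smooth point on the strict transform of the corresponding component ($V(v)$ for $p_1$ and $V(u)$ for $p_2$), and these two strict transforms are two distinct irreducible components of $(\bar X_{\kbar})_{\red}$. Puncturing these two $\P^1_{\kbar}$-components at one smooth point each turns them into copies of $\A^1_{\kbar}$, leaves every other component as $\P^1_{\kbar}$, and preserves the tree structure of the intersection graph, yielding exactly the description in the statement. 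The main subtlety I anticipate is ensuring that the strict transforms of $V(u)$ and $V(v)$ really remain irreducible components of $\bar X_k$ rather than being absorbed into exceptional loci; this follows because $p_1, p_2$ (and open neighborhoods thereof on these components) lie in the locus where $\bar X \to \bar Y$ is an isomorphism.
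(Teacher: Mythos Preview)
Your proposal is correct and follows essentially the same approach as the paper: reduce via \autoref{lemma:I'} to the case where $V(I)$ is a finite set of points on $Y_k$, compactify $Y$ to the projective conic $\Proj R[X,Y,Z]/(XY-\pi Z^2)$, blow up the extended ideal, resolve the resulting surface (the paper cites the Stacks Project tag for Lipman's theorem rather than naming it), restrict to the preimage of $Y$, and invoke \autoref{lemma:AAA} to identify the special fiber as a tree of $\P^1$'s with exactly two components punctured at the points $[1:0:0]$ and $[0:1:0]$. The subtlety you flag about the strict transforms of the two branches surviving as irreducible components is also implicitly handled the same way in the paper, via the observation that neighborhoods of those two boundary points lie in the locus where the resolution is an isomorphism.
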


\begin{proof}
We will need to compactify $Y$ to a proper $R$-scheme simply to be able to use \autoref{lemma:AAA}.  To that end, let \[\overline{Y} = \Proj R[X,Y,Z]/(XY-\pi Z^2).\]  The scheme $Y = \Spec R[x,y]/(xy-\pi)$ is naturally identified with the open subscheme of $\overline{Y}$ obtained by removing the closed subscheme defined by the ideal $(Z)$. (This closed subscheme is the union of the two $R$-points $[0:1:0]$ and $[1:0:0]$.)  The scheme $\overline{Y}$ is regular, and it is proper and flat over $R$ with generic fiber isomorphic to $\P^1_{F}$. (It has a $F$-rational point, for instance  $[0:1:0]$.)

By \autoref{lemma:I'}, we may and will assume that the closed subscheme of $Y$ determined by $I$ is a finite, $0$-dimensional scheme $W$ supported on the special fiber $Y_k$. Thus, the underlying set of $W$ remains a closed subset of $\overline{Y}$, and therefore $W$ is a closed subscheme of $\overline{Y}$ entirely contained in $Y \subset \overline{Y}$.  

Now, let $B = \Bl_{W} \overline{Y}$ -- by the previous paragraph, it follows that $\Bl_{I}A$ is an open subscheme of $B$, and that the natural morphism $B \to \overline{Y}$ is an isomorphism over the open subscheme $V = \overline{Y} \setminus W$.  We can and will consider $V$ as an open subset of $B$ as well.  As $\overline{Y}$ is regular, it follows that $V$ is also regular.

Next, by \cite[\href{https://stacks.math.columbia.edu/tag/0C2U}{Tag 0C2U}]{stacks-project}, the $2$-dimensional scheme $B$ admits a desingularization \[\rho: \overline{X} \to B,\] obtained by performing blow-ups and normalizations in such a way that $\rho: \overline{X} \to B$ is an isomorphism over $V \subset B$. ($B$ is regular at all points of $V$.)

The special fiber $\overline{X}_{k}$ has two distinguished irreducible components which we denote by $L$ and $M$ -- their generic points lie in $V$, and correspond to the generic points of the two $\P^1$'s $V(X,\pi)$ and $V(Y, \pi)$ comprising the special fiber $\overline{Y}_{k}$.  All other irreducible components of the curve  $\overline{X}_{k}$ are contracted to the $0$-dimensional scheme $W \subset \overline{Y}$. 

Finally, we set $X \subset \overline{X}$ to be the open set $\rho^{-1}(Y)$.  Observe that $L \cap X$ and $M \cap X$ are two copies of $\A^{1}_{k}$, as we have simply removed the points lying over $[0:1:0]$ and $[1:0:0]$ from what were originally $\P^1$'s. Now, the lemma follows by applying \autoref{lemma:AAA}, which says that all other irreducible components of the reduced geometric special fiber $(X_{\kbar})_{\red}$ not arising from $L$ or $M$ are isomorphic to $\P^{1}_{\kbar}$, and that the intersection graph is a tree.
\end{proof}

\begin{remark}
\label{anandremark}
From the proof of \autoref{lemma:AAimprovement} above, the two $\A^{1}$ components $L \cap X, M \cap X$ of $(X_{\kbar})_{\red}$ map properly and birationally onto the two components $V(\pi, x)$ and $V(\pi,y)$ under the blow down morphism $X \to \Bl_{I} A$. A proper, birational mapping between two smooth curves is an isomorphism, so in particular, in each $\A^{1}$ component of $(X_{\kbar})_{\red}$ there is a unique point mapping to the closed point $V(\pi,x,y) \in Y$.
\end{remark}

For the following proposition, we additionally assume that $R$ is a \emph{complete} discrete valuation ring.

\begin{proposition}\label{prop:specialfiber}
Let $\Ann := \Spa(F\langle x, y\rangle /( xy-\pi^n ), R\langle x, y\rangle /( xy-\pi^n ))$ be a rigid-analytic annulus over $F$, and let $\mathfrak{X}$ denote an admissible formal scheme over $R$ that is quasi-paracompact. Suppose there exists a rigid-analytic map $f: \Ann \rightarrow \mathfrak{X}^{\rig}$. Then, there exists an admissible formal $R$-scheme $\mathfrak{A}$ with
$\mathfrak{A}^{\rig} = \Ann$ and a map $g: \mathfrak{A}\rightarrow \mathfrak{X}$ of admissible formal schemes over $R$ with $g^{\rig} = f$, such that the reduced, geometric special fiber of $\mathfrak{A}$ is connected, and each of its irreducible components is isomorphic to either $\P^1_{\overline{k}}$ or $\A^1_{\overline{k}}$, and its intersection graph is a tree.
\end{proposition}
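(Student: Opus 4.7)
The plan is to combine Raynaud's formal-rigid dictionary with the compactification-and-resolution strategy already used in \autoref{lemma:AAimprovement}. By Raynaud's theorem (\cite{bosch}), the morphism $f$ is the generic fiber of some morphism $g_0 : \mathfrak{A}_0 \to \mathfrak{X}$ of admissible formal $R$-schemes, and any two admissible formal models of $\Ann$ are related by admissible formal blowups; after replacing $\mathfrak{A}_0$ by a further blowup dominating the standard model $\mathfrak{Y}_n := \Spf(R\langle x, y\rangle/(xy-\pi^n))$ (and composing with the dominating morphism), we may assume $\mathfrak{A}_0 = \Bl_I \mathfrak{Y}_n$ for some open ideal $I$ containing a power of $\pi$. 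Approximating generators of $I$ by elements of $A := R[x,y]/(xy-\pi^n)$ lifts $I$ to an ideal $I' \subset A$ with $I' \cdot \hat{A} = I$; then $B_0 := \Bl_{I'} Y_n$ (where $Y_n := \Spec A$) has $\pi$-adic completion equal to $\mathfrak{A}_0$.

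Compactify via $\overline{Y}_n := \Proj R[X,Y,Z]/(XY - \pi^n Z^2)$, with $Y_n$ being the affine chart $Z = 1$, and form $\overline{B}_0 := \Bl_{I'} \overline{Y}_n$. Since $I'$ contains a power of $\pi$, this blowup is an isomorphism on generic fibers, so $\overline{B}_0$ is integral, proper and flat over $R$ with smooth geometrically irreducible genus zero generic fiber $\cong \P^1_F$. For $n \geq 2$ the scheme $\overline{B}_0$ may be singular at the origin of its special fiber, so we apply Lipman's resolution of singularities for excellent two-dimensional schemes to obtain a proper birational morphism $\overline{X} \to \overline{B}_0$ with $\overline{X}$ regular. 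The two ``points at infinity'' $[0:1:0]$ and $[1:0:0]$ of $\overline{Y}_n$ lie outside $Y_n$, hence outside the support of $I'$, and are regular points of $\overline{Y}_n$; therefore they have unique preimages $\tilde{p}_\infty, \tilde{p}_0 \in \overline{X}$. Now \autoref{lemma:AAA} applied to $\overline{X}$ yields that $(\overline{X}_{\bar k})_{\red}$ is a connected tree of $\P^1_{\bar k}$'s.

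Finally, set $X := \overline{X} \setminus \{\tilde{p}_\infty, \tilde{p}_0\}$; the two points $\tilde{p}_\infty, \tilde{p}_0$ lie on the strict transforms of the two components of the special fiber of $\overline{Y}_n$, so removing them converts those two $\P^1$ components into $\A^1_{\bar k}$'s while leaving the tree structure of the remaining components intact. Let $\mathfrak{A}$ be the $\pi$-adic formal completion of $X$ along its special fiber; then $\mathfrak{A}^{\rig} = \Ann$, its special fiber has the required shape, and the morphism $X \to B_0$ (followed by $\pi$-adic completion) produces an admissible formal blowup $\mathfrak{A} \to \mathfrak{A}_0$, which composed with $g_0$ gives the desired $g : \mathfrak{A} \to \mathfrak{X}$. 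The main obstacle is the desingularization step, since \autoref{lemma:AAA} critically requires regularity while $\overline{B}_0$ need not be regular for $n \geq 2$; Lipman's theorem handles this, and one only needs to verify that the resolution does not disturb the two points at infinity (which is clear since these are regular points disjoint from the singular locus and from the support of $I'$).
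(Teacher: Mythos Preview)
Your argument is correct and follows the same path as the paper: the paper invokes \cite[\S 8.4, Lemma 4]{bosch} to obtain the blowup model of the standard $\mathfrak{Y}_n$ and then cites \autoref{lemma:AAimprovement} directly, whereas you unfold that lemma inline and are more explicit about the $A_{n-1}$-singularity of $\overline{Y}_n$ at the origin for $n\geq 2$ (which the paper's appendix handles silently via the same desingularization step). One minor slip: Lipman's resolution $X \to B_0$ need not $\pi$-adically complete to a single \emph{admissible formal blowup} in the technical sense (it is an iterated sequence of blowups and normalizations), but this is harmless since all you actually need is that $\hat X^{\rig} = \Ann$ and that there is a formal morphism $\hat X \to \mathfrak{A}_0$, both of which follow from $X \to Y_n$ being proper and an isomorphism over the generic fiber.
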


\begin{proof}
Let $\mathfrak{A}_0 := \Spf(R\langle x, y\rangle /( xy - \pi^n ))$, so that $\mathfrak{A}_0$ is an admissible formal $R$-scheme with $\mathfrak{A}_0^{\rig} \cong \Ann.$ 
Let us also set $A := R[x , y]/( xy - \pi^n )$, so that the $\pi$-adic completion $\hat{A}$ of $A$ is precisely $R\langle x,y\rangle /( xy - \pi^n)$. By \cite[\S 8.4, Lemma 4]{bosch}, we see that there is a coherent open ideal $\hat{J} \subseteq \hat{A}$, and a map of admissible formal $R$-schemes, $h : \Bl_{\hat{J}}\mathfrak{A}_0 \rightarrow \mathfrak{X}$, such that the following holds; if $\alpha : \Bl_{\hat{J}}\mathfrak{A}_0 \rightarrow \mathfrak{A}_0$ denotes the admissible formal blow-up of $\mathfrak{A}_0$ along the coherent open ideal $\hat{J}$, then $h^{\rig} = f \circ \alpha^{\rig},$ with $\alpha^{\rig}$ being an isomorphism.

The ideal $\hat{J}$, being an open ideal of $\hat{A}$, contains a power of $\pi$, and thus there exists an ideal $J$ of $A$ such that the extension of $J$ in $\hat{A}$, is $\hat{J}$. Furthermore, we recall that the formal blow-up $\alpha : \Bl_{\hat{J}}\mathfrak{A}_0 \rightarrow \mathfrak{A}_0$ is in fact the $\pi$-adic completion of the scheme-theoretic blow-up $\Bl_{\hat{J}}(\Spec(\hat{A}))\rightarrow \Spec(\hat{A})$. Since $A \rightarrow \hat{A}$ is flat, and since scheme-theoretic blow-ups commute with flat base change, we see that $\alpha$ is in turn the $\pi$-adic completion of the scheme-theoretic blow-up $\Bl_J(\Spec(A)) \rightarrow \Spec(A)$. 

By \autoref{lemma:AAimprovement}, there is an integral, regular  $R$-scheme $X$ flat over $R$ and a proper morphism $\rho : X \rightarrow \Bl_J(\Spec(A))$, such that the composite map $\beta : X \rightarrow \Bl_J(\Spec(A)) \rightarrow \Spec(A)$ is an isomorphism over $\Spec(A) \setminus V(J)$ and furthermore with the property that the curve $(X_{\kbar})_{\red}$ is a tree of rational curves with two irreducible components isomorphic to $\A^1_{\kbar}$ and all others isomorphic to $\P^1_{\kbar}$. The $\pi$-adic completion $\hat{\beta} : \hat{X} \rightarrow \Bl_{\hat{J}}\mathfrak{A}_0 \rightarrow \mathfrak{A}_0$ of $\beta : X \rightarrow \Bl_J(\Spec(A)) \rightarrow\Spec(A)$, is a map of admissible formal $R$-schemes, that induces an isomorphism $\hat{\beta}^{\rig} : \hat{X}^{\rig} \rightarrow \mathfrak{A}_0^{\rig} = \Ann$ of rigid-generic fibers. 
\end{proof}

\bibliographystyle{alpha}
\bibliography{main}

\newcommand{\etalchar}[1]{$^{#1}$}
\begin{thebibliography}{PST{\etalchar{+}}21}

\bibitem[And03]{Andre}
Yves Andr\'{e}.
\newblock {\em Period mappings and differential equations. {F}rom {$\Bbb C$} to {$\Bbb C_p$}}, volume~12 of {\em MSJ Memoirs}.
\newblock Mathematical Society of Japan, Tokyo, 2003.
\newblock T\^{o}hoku-Hokkaid\^{o} lectures in arithmetic geometry, With appendices by F. Kato and N. Tsuzuki.

\bibitem[Ber99]{berkovich-local-contractibility}
Vladimir~G. Berkovich.
\newblock Smooth {$p$}-adic analytic spaces are locally contractible.
\newblock {\em Invent. Math.}, 137(1):1--84, 1999.

\bibitem[Bor72]{borel}
Armand Borel.
\newblock {Some metric properties of arithmetic quotients of symmetric spaces and an extension theorem}.
\newblock {\em Journal of Differential Geometry}, 6(4):543 -- 560, 1972.

\bibitem[Bos14]{bosch}
Siegfried Bosch.
\newblock {\em Lectures on formal and rigid geometry}, volume 2105 of {\em Lecture Notes in Mathematics}.
\newblock Springer, Cham, 2014.

\bibitem[Bru22]{Brunebarbe}
Yohan Brunebarbe.
\newblock Hyperbolicity in presence of a large local system.
\newblock arXiv:v, 2022.

\bibitem[Che02]{Cherry}
William Cherry.
\newblock Non-archimedean big picard theorems, 2002.

\bibitem[Con99]{conrad-irreducible-comp}
Brian Conrad.
\newblock Irreducible components of rigid spaces.
\newblock {\em Ann. Inst. Fourier (Grenoble)}, 49(2):473--541, 1999.

\bibitem[Con18]{conradabeloids}
Brian Conrad.
\newblock Higher-level canonical subgroups in abelian varieties.
\newblock \url{https://math.stanford.edu/~conrad/papers/subgppaper.pdf}, 2018.

\bibitem[CR04]{cherry-ru}
William Cherry and Min Ru.
\newblock Rigid analytic {P}icard theorems.
\newblock {\em Amer. J. Math.}, 126(4):873--889, 2004.

\bibitem[CS19]{CaraianiScholze}
Ana Caraiani and Peter Scholze.
\newblock On the generic part of the cohomology of non-compact unitary shimura varieties.
\newblock arXiv:1909.01898, 2019.

\bibitem[Del71a]{Delignefixedpart}
Pierre Deligne.
\newblock Th\'{e}orie de {H}odge. {II}.
\newblock {\em Inst. Hautes \'{E}tudes Sci. Publ. Math.}, (40):5--57, 1971.

\bibitem[Del71b]{DeligneTravaux}
Pierre Deligne.
\newblock Travaux de {S}himura.
\newblock In {\em S\'{e}minaire {B}ourbaki, 23\`eme ann\'{e}e (1970/1971)}, volume Vol. 244 of {\em Lecture Notes in Math.}, pages Exp. No. 389, pp. 123--165. Springer, Berlin-New York, 1971.

\bibitem[dJ95]{deJong-etale}
A.~J. de~Jong.
\newblock \'{E}tale fundamental groups of non-{A}rchimedean analytic spaces.
\newblock {\em Compositio Math.}, 97(1-2):89--118, 1995.
\newblock Special issue in honour of Frans Oort.

\bibitem[DLLZ18]{DLLZ}
Hansheng Diao, Kai-Wen Lan, Ruochuan Liu, and Xinwen Zhu.
\newblock Logarithmic riemann-hilbert correspondences for rigid varieties.
\newblock arXiv:1803.05786, 2018.

\bibitem[DLLZ19]{DLLZ2}
Hansheng Diao, Kai-Wen Lan, Ruochuan Liu, and Xinwen Zhu.
\newblock Logarithmic adic spaces: some foundational results.
\newblock arXiv:1912.09836, 2019.

\bibitem[FC90]{FaltingsChai}
Gerd Faltings and Ching-Li Chai.
\newblock {\em Degeneration of abelian varieties}, volume~22 of {\em Ergebnisse der Mathematik und ihrer Grenzgebiete (3) [Results in Mathematics and Related Areas (3)]}.
\newblock Springer-Verlag, Berlin, 1990.
\newblock With an appendix by David Mumford.

\bibitem[FvdP04]{fresnel-vanderput}
Jean Fresnel and Marius van~der Put.
\newblock {\em Rigid analytic geometry and its applications}, volume 218 of {\em Progress in Mathematics}.
\newblock Birkh\"{a}user Boston, Inc., Boston, MA, 2004.

\bibitem[Gro66]{Grothabelian}
A.~Grothendieck.
\newblock Un th\'{e}or\`eme sur les homomorphismes de sch\'{e}mas ab\'{e}liens.
\newblock {\em Invent. Math.}, 2:59--78, 1966.

\bibitem[Gro72]{Grothendieck}
Alexander Grothendieck.
\newblock {\em Modeles de Neron et monodromie, in Groupes de mon- odromie en geometric algebriqu}.
\newblock Springer-Verlag, Berlin, 1972.
\newblock S{\'e}minaire de G{\'e}om{\'e}trie Alg{\'e}brique (SGA7 I), Lecture Notes in Mathematics, 288.

\bibitem[Har13]{hartl}
Urs Hartl.
\newblock On a conjecture of {R}apoport and {Z}ink.
\newblock {\em Invent. Math.}, 193(3):627--696, 2013.

\bibitem[Hir64]{hironaka-resolutionI}
Heisuke Hironaka.
\newblock Resolution of singularities of an algebraic variety over a field of characteristic zero. {I}, {II}.
\newblock {\em Ann. of Math. (2)}, 79:109--203; {\bf 79 (1964), 205--326}, 1964.

\bibitem[JV21]{javanpeykar-brodyhyperbolic}
Ariyan Javanpeykar and Alberto Vezzani.
\newblock Non-archimedean hyperbolicity and applications.
\newblock {\em J. Reine Angew. Math.}, 778:1--29, 2021.

\bibitem[Kie67]{kiehl-derham}
Reinhardt Kiehl.
\newblock Die de {R}ham {K}ohomologie algebraischer {M}annigfaltigkeiten \"{u}ber einem bewerteten {K}\"{o}rper.
\newblock {\em Inst. Hautes \'{E}tudes Sci. Publ. Math.}, (33):5--20, 1967.

\bibitem[Kis10]{Kisinintegralmodels}
Mark Kisin.
\newblock Integral models for {S}himura varieties of abelian type.
\newblock {\em J. Amer. Math. Soc.}, 23(4):967--1012, 2010.

\bibitem[L\"93]{lutkebohmert-riemann}
W.~L\"{u}tkebohmert.
\newblock Riemann's existence problem for a {$p$}-adic field.
\newblock {\em Invent. Math.}, 111(2):309--330, 1993.

\bibitem[L\"22]{lutkebohmert}
Werner L\"{u}tkebohmert.
\newblock On extension of rigid analytic objects.
\newblock {\em M\"{u}nster J. Math.}, 15(1):83--166, 2022.

\bibitem[Lip93]{lipshitz-robinson-rigid-subanalytic}
L.~Lipshitz.
\newblock Rigid subanalytic sets.
\newblock {\em Amer. J. Math.}, 115(1):77--108, 1993.

\bibitem[LS18]{KaiWen}
Kai-Wen Lan and Beno\^{i}t Stroh.
\newblock Compactifications of subschemes of integral models of {S}himura varieties.
\newblock {\em Forum Math. Sigma}, 6:Paper No. e18, 105, 2018.

\bibitem[LZ17]{liu-zhu-rigidity}
Ruochuan Liu and Xinwen Zhu.
\newblock Rigidity and a {R}iemann--{H}ilbert correspondence for $p$-adic local systems.
\newblock {\em Inventiones mathematicae}, 207(1):291--343, 2017.

\bibitem[Mor21]{Jacksonhyperbolicity}
Jackson~S. Morrow.
\newblock Non-archimedean entire curves in closed subvarieties of semi-abelian varieties.
\newblock {\em Math. Ann.}, 379(3-4):1003--1010, 2021.

\bibitem[Nad89]{Nadel}
Alan~Michael Nadel.
\newblock The nonexistence of certain level structures on abelian varieties over complex function fields.
\newblock {\em Ann. of Math. (2)}, 129(1):161--178, 1989.

\bibitem[Osw21]{p-adic-definable-chow}
Abhishek Oswal.
\newblock A non-archimedean definable chow theorem.
\newblock \textit{J. Eur. Math. Soc.}, accepted, 2021.

\bibitem[PST{\etalchar{+}}21]{PST}
Jonathan Pila, Ananth~N. Shankar, Jacob Tsimerman, an\phantom{}d an appendix by~H\'el\`ene Esnault, and Michael Groechenig.
\newblock Canonical heights on shimura varieties and the andr\'e-oort conjecture.
\newblock arXiv:2109.08788, 2021.

\bibitem[RZ96]{RZ}
M.~Rapoport and Th. Zink.
\newblock {\em Period spaces for {$p$}-divisible groups}, volume 141 of {\em Annals of Mathematics Studies}.
\newblock Princeton University Press, Princeton, NJ, 1996.

\bibitem[Shi22]{koji}
Koji Shimizu.
\newblock A {$p$}-adic monodromy theorem for de {R}ham local systems.
\newblock {\em Compos. Math.}, 158(12):2157--2205, 2022.

\bibitem[{Sta}18]{stacks-project}
The {Stacks Project Authors}.
\newblock \textit{Stacks Project}.
\newblock \url{https://stacks.math.columbia.edu}, 2018.

\bibitem[Sun20]{sun-hyperbolicity-of-mg}
Ruiran Sun.
\newblock Non-archimedean hyperbolicity of the moduli space of curves.
\newblock \url{https://arxiv.org/abs/2009.13096}, 2020.

\end{thebibliography}
\end{document}